\newcommand{\G}{\mathcal{G}}
\newcommand{\s}{\mathfrak{s}}
\newcommand{\R}{\mathbb {R}} 
\newcommand{\Q}{\mathbb {Q}} 
\newcommand{\Z}{\mathbb {Z}} 
\newcommand{\N}{\mathbb {N}} 
\newcommand{\D}{\mathbb {Z}[k^{-1}]}
\renewcommand{\O}{\mathcal{O}}
\newcommand{\K}{\mathcal{K}} 
\newcommand{\tr}{\mathrm{tr}} 
\newcommand{\Tr}{\mathrm{Tr}} 
\newcommand{\Ker}{\mathrm{Ker}} 
\newcommand{\Cu}{\mathrm{Cu}}
\newcommand{\Prim}{\mathrm{Prim}}
\renewcommand{\Im}{\mathrm{Im}}
\newcommand{\id}{\mathrm{id}}
\theoremstyle{definition}
\newtheorem{thm}{Theorem}[section]
\newtheorem{lem}[thm]{Lemma}
\newtheorem{prp}[thm]{Proposition}
\newtheorem{dfn}[thm]{Definition}
\newtheorem{cor}[thm]{Corollary}
\newtheorem{rmk}[thm]{Remark}
\newtheorem*{dfn*}{Definition}
\numberwithin{equation}{section}
\title{
C*-diagonals with Cantor spectrum in Cuntz algebras
}
\author{Samuel Evington}
\address{Samuel Evington, Mathematical Institute, University of M\"unster, Ein\-stein\-strasse 62, 48149 M\"unster, Germany}
\email{evington@uni-muenster.de}
\author{Philipp Sibbel}
\address{Philipp Sibbel, Mathematical Institute, University of M\"unster, Ein\-stein\-strasse 62, 48149 M\"unster, Germany}
\email{philipp.sibbel@uni-muenster.de}
\thanks{Funded by the Deutsche Forschungsgemeinschaft (DFG, German Research Foundation) under Germany’s Excellence Strategy – EXC 2044 – 390685587, Mathematics Münster – Dynamics – Geometry – Structure; the Deutsche Forschungsgemeinschaft (DFG, German Research Foundation) – Project-ID 427320536 – SFB 1442; and ERC Advanced Grant 834267 - AMAREC}
\begin{document}
\begin{abstract}
We prove that there exists a C$^*$-diagonal with Cantor spectrum in the Cuntz algebra $\mathcal{O}_k$ for $2 \leq k < \infty$. 
Our method generalises to an uncountable family of UCT Kirchberg algebras with distinct K-theory.
Moreover, we construct principal étale groupoid models for these Cuntz algebras and UCT Kirchberg algebras. 
\end{abstract}

\maketitle
\allowdisplaybreaks
\section*{Introduction}
\numberwithin{equation}{section}
\renewcommand{\thethm}{\Alph{thm}}

The theory of Cartan subalgebras of C$^*$-algebras was initiated by Kumjian (\cite{kumjian1986c}) and Renault (\cite{renault2008cartan}), building on the work of Vershik (\cite{Ver71}) and Feldman--Moore (\cite{FM77}) in the von Neumann setting. Every Cartan subalgebra $D \subseteq A$ corresponds to a twisted étale groupoid model for the C$^*$-algebra $A$. For this reason, the question of the existence and (non)uniqueness of Cartan subalgebras has received much attention (\cite{barlak2017cartan, li2019cartan, li2020every, li2022constructing}) aided by the tools developed in the Elliott classification programme (\cite{El95,Ki95,Ph00, winter2018structure, WhiteICM}).

This paper is concerned with the special class of Cartan subalgebras that correspond to twisted equivalence relations, i.e.\ principal groupoids. 
Kumjian called these subalgebras C$^*$-diagonals (\cite{kumjian1986c}). C$^*$-diagonals arise naturally in the study of regularity properties for Cartan pairs (\cite{KW24}). In particular, finite diagonal dimension forces a Cartan subalgebra to be a C$^*$-diagonal (\cite{LLW23}). We therefore expect C$^*$-diagonals to play a significant role in any future K-theoretic classification of Cartan subalgebras. 

The motivating example of a C$^*$-diagonal is the subalgebra of diagonal matrices $D_k \subseteq M_k$. 
Taking infinite tensor products, one obtains a C$^*$-diagonal $D_{k^\infty}$ with Cantor spectrum in the uniformly hyperfinite (UHF) C$^*$-algebra $M_{k^\infty}$. 
The Cuntz algebra $\O_k$ contains a canonical copy of $M_{k^\infty}$ and thus $D_{k^\infty}$ for $2 \leq k < \infty$ (\cite{cuntz1977simple}); 
however, the inclusion $D_{k^\infty} \subseteq \O_k$ is only a Cartan subalgebra, not a C$^*$-diagonal. 
Indeed, Renault constructed a groupoid model for this inclusion that is topologically principal but not principal (\cite{renault1980groupoid}).
More generally, several authors have shown that every Kirchberg algebra satisfying the universal coefficient theorem (UCT) has a Cartan subalgebra by constructing topologically principal groupoid models (see \cite{spielberg2007graph,katsura2008class,Yeend07,li2019cartan, CFH20}), but the constructions typically lead to non-principal groupoids, so give rise to Cartan subalgebras that are not C$^*$-diagonals. 
It is therefore an open question which UCT Kirchberg algebras contain a C$^*$-diagonal.

This question has attracted considerable attention over the years. Hjelmborg proved that $\O_2$ does in fact contain a C$^*$-diagonal (\cite{hjelmborgstability}). R{\o}rdam and Sierakowski showed that purely infinite C$^*$-algebras can arise as crossed products when a non-amenable group acts amenably on a topological space (\cite{rordam2012purely}), which leads to some further examples of C$^*$-diagonals in UCT Kirchberg algebras (see \cite{elliott2016k, suzuki2017amenable}). 
Moreover, Brown, Clark, Sierakowski, and Sims proved that a C$^*$-diagonal exists in UCT Kirchberg algebras under certain K-theoretic conditions (\cite{brown2016purely}). 

Recently, Winter and the second named author proved that $\O_2$ contains a C$^*$-diagonal with Cantor spectrum, and asked if this result generalises to $\O_k$ (\cite{sibbel2024cantor}). The main result of this paper is the following. 
\begin{thm}\label{thm:Diagonal-in-Ok}
   The Cuntz algebra $\O_k$ contains a C$^*$-diagonal with Cantor spectrum for every $2 \leq k < \infty$.
\end{thm}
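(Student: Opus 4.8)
**Strategy for proving that $\mathcal{O}_k$ contains a C*-diagonal with Cantor spectrum.**

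The natural approach is to build on the $\mathcal{O}_2$ case of Winter and the second-named author, and on the general philosophy that one should realize $\mathcal{O}_k$ as a crossed product or a groupoid C*-algebra where the groupoid is principal (an equivalence relation, possibly twisted) with totally disconnected unit space. My plan is to exhibit $\mathcal{O}_k$ — or at least a C*-algebra known to be isomorphic to it by the Kirchberg–Phillips classification theorem — as $C^*_r(\mathcal{R})$ (or a twist thereof) for a principal étale groupoid $\mathcal{R}$ on the Cantor set; then the canonical copy of $C(\mathcal{R}^{(0)}) = C(\text{Cantor set})$ is automatically a C*-diagonal with Cantor spectrum. Concretely, one wants an ample principal groupoid $\mathcal{R}$ whose reduced C*-algebra is a unital, simple, purely infinite, separable, nuclear C*-algebra satisfying the UCT, with $K_0 = \mathbb{Z}/(k-1)\mathbb{Z}$, $[1_{\mathcal{O}_k}]_0$ a generator, and $K_1 = 0$.

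I would proceed as follows. First, recall that $\mathcal{O}_2$ already has a C*-diagonal with Cantor spectrum, giving a principal étale groupoid model $\mathcal{R}_2$ on the Cantor set; the task is to "shrink" the $K$-theory from $0$ to $\mathbb{Z}/(k-1)$. Second, I would look for a $(k-1)$-fold index map construction: since $\mathcal{O}_k$ is $\mathcal{O}_{k-1+1}$ and there is a classical picture $\mathcal{O}_k \cong \mathcal{O}_\infty \rtimes \cdots$, or more usefully $\mathcal{O}_k$ arises as a Cuntz–Pimsner algebra / graph algebra whose $K_0$ is $\operatorname{coker}(1 - A^t) = \mathbb{Z}/(k-1)$ for the $1\times 1$ matrix $A = (k)$. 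The groupoid-theoretic incarnation of this is to take a $k$-to-$1$ local homeomorphism (a solenoid-type or Deaconu–Renault construction) but modify it so the resulting groupoid becomes principal — for instance by passing to a suitable "blow-up" along the lines of Renault's construction for $D_{k^\infty}\subseteq\mathcal O_k$, but replacing the shift with an honest equivalence relation using a Cantor-set-valued cocycle. Third, with such an $\mathcal{R}$ in hand, I would verify the Kumjian–Renault conditions: $\mathcal{R}$ is principal, étale, second countable, Hausdorff, with compact totally disconnected unit space; the (possibly trivial) twist satisfies the needed conditions; and $C^*_r(\mathcal{R})$ is unital simple purely infinite nuclear UCT with the correct $K$-theory and class of the unit — then apply Kirchberg–Phillips to conclude $C^*_r(\mathcal{R}) \cong \mathcal{O}_k$.

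The main obstacle, I expect, is the simultaneous achievement of \emph{principality} and the \emph{correct $K_0$}: principal groupoids on the Cantor set tend to produce AF or, at best, $K_0$-groups that are dimension groups or come with an order structure, whereas $\mathcal{O}_k$ has torsion $K_0$ with no order (it is purely infinite). Overcoming this is presumably where the real content lies: one must engineer a principal groupoid whose C*-algebra is purely infinite, which is delicate because purely infinite simple C*-algebras have traditionally been built from groupoids with genuine "expansion" (non-injective local homeomorphisms), and such expansion is exactly what destroys principality. I anticipate the resolution uses a clever inductive-limit or HNN/Bass–Serre-type construction — gluing together finite-dimensional building blocks along partial isomorphisms in a way that forces pure infiniteness in the limit while keeping every groupoid in the system principal — followed by a $K$-theory computation (via the six-term sequence or the groupoid homology of Matui) confirming $K_0 = \mathbb{Z}/(k-1)$, $K_1 = 0$. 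Once the model is constructed and its invariants computed, the conclusion is immediate from classification.
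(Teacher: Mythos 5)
Your proposal correctly identifies the endgame the paper also uses -- produce a unital UCT Kirchberg algebra carrying a C$^*$-diagonal with Cantor spectrum whose invariants are $K_0\cong\Z_{k-1}$ with the unit a generator and $K_1=0$, then invoke Kirchberg--Phillips -- but it stops exactly at the point where the actual mathematical content begins. You acknowledge that the crux is to ``engineer a principal groupoid whose C$^*$-algebra is purely infinite'' and then only speculate that ``a clever inductive-limit or HNN/Bass--Serre-type construction'' should exist; no such construction is given, no candidate algebra is written down, and no K-theory is computed. As it stands, the argument is a reduction of the theorem to an unproved existence statement that is essentially equivalent to the theorem itself, so there is a genuine gap rather than a proof.

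For comparison, the paper's resolution of this crux is quite different from what you anticipate, and it does not proceed by building the principal groupoid directly. One forms $B_{k,\alpha}=\mathrm{C}^*\bigl(D_{k^\infty}\otimes C(X)\cup\{S_1\otimes v,\ldots,S_k\otimes v\}\bigr)\subseteq\O_k\otimes(C(X)\rtimes_\alpha\Z)$, where $\alpha$ is a Cantor minimal system and $v$ the implementing unitary: twisting the Cuntz isometries by $v$ destroys the isotropy responsible for $D_{k^\infty}\subseteq\O_k$ failing to be a diagonal, so $D_{k^\infty}\otimes C(X)$ becomes a C$^*$-diagonal in $B_{k,\alpha}$. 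One then shows $\K\otimes B_{k,\alpha}\cong C(X,\K M_{k^\infty})\rtimes_{\Phi_k\otimes\alpha}\Z$, proves pure infiniteness via R{\o}rdam--Sierakowski, and computes via Pimsner--Voiculescu that for an odometer $\alpha$ one gets $K_0(B_{k,\alpha})\cong\varinjlim\Z_{k^{n_i}-1}$, $K_1=0$ -- note this is \emph{not} $\Z_{k-1}$, contrary to what a direct version of your plan would require. The final and essential trick, absent from your outline, is to tensor with a UHF algebra $M_\s$ whose supernatural number is supported on the primes not dividing $k-1$: this annihilates the excess torsion, yielding $K_0\cong\Z_{k-1}$ with the unit a generator, while the tensor product of C$^*$-diagonals remains a C$^*$-diagonal with Cantor spectrum, so Kirchberg--Phillips gives $B_{k,\alpha}\otimes M_\s\cong\O_k$. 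In particular, no untwisted principal groupoid model is needed for this theorem at all (that is a separate, subsequent result obtained via topological graph algebras), and your heuristic that principal Cantor groupoids force ordered, dimension-group-like $K_0$ is refuted by the construction itself.
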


By the general theory of C$^*$-diagonals (\cite{kumjian1986c, renault2008cartan}), it follows from Theorem \ref{thm:Diagonal-in-Ok} that there is a \emph{twisted} étale groupoid model for $\O_k$ that is principal. However, using the theory of topological graphs (\cite{katsura2004class}), we can show that no twist is required.
\begin{thm}\label{thm:Groupoid-model-for-Ok}
   For every $2 \leq k < \infty$, there exists a second countable, principal, étale groupoid $\mathcal{G}_k$ whose unit space is a Cantor space such that $C^*_r(\mathcal{G}_k) = \O_k$.
\end{thm}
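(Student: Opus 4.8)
The plan is to realise $\mathcal{G}_k$ as the path groupoid of a suitable topological graph, rather than to reconstruct it abstractly from the Cartan pair of Theorem~\ref{thm:Diagonal-in-Ok}. The latter route, via \cite{kumjian1986c, renault2008cartan}, does produce a principal \emph{twisted} étale groupoid model for $\O_k$ with Cantor unit space, but offers no handle on the twist; by contrast a topological graph C$^*$-algebra is by construction an \emph{untwisted} reduced groupoid C$^*$-algebra. The strategy is therefore to recognise the Cartan pair from Theorem~\ref{thm:Diagonal-in-Ok} inside the topological graph framework of \cite{katsura2004class}, read off the untwisted étale groupoid, and then transport principality back using Theorem~\ref{thm:Diagonal-in-Ok} itself.

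First I would check that the C$^*$-diagonal with Cantor spectrum built in the proof of Theorem~\ref{thm:Diagonal-in-Ok} is, up to conjugacy, the canonical diagonal subalgebra $\mathcal{D}_{E_k} \subseteq \O(E_k)$ of a topological graph $E_k = (E_k^0, E_k^1, r, s)$ in which $E_k^0$ is a Cantor set, $s$ is a surjective local homeomorphism with finite fibres, and $r$ is proper and surjective (so that $E_k$ is regular and has compact infinite-path space). For such a graph the spectrum of $\mathcal{D}_{E_k}$ is the infinite-path space $E_k^\infty$, which is again a Cantor set. If the proof of Theorem~\ref{thm:Diagonal-in-Ok} is not already phrased in this language, this step amounts to rewriting the construction in terms of a $C(E_k^0)$-correspondence and checking, via the six-term exact sequence of \cite{katsura2004class}, that the induced endomorphism $\id - [E_k]$ of $K^0(E_k^0) \cong C(E_k^0, \Z)$ is injective with cokernel $\Z/(k-1)\Z$; then $\O(E_k)$ is a UCT Kirchberg algebra with the K-theory of $\O_k$, hence $\O(E_k) \cong \O_k$. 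I expect this identification to be the main obstacle.

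Second, I would invoke the groupoid model for topological graph C$^*$-algebras (\cite{katsura2004class, katsura2008class} and related work): for $E_k$ as above there is a second countable, locally compact Hausdorff, amenable, \emph{étale} groupoid $\mathcal{G}_k$ with unit space $E_k^\infty$ together with a canonical isomorphism $C^*_r(\mathcal{G}_k) \cong \O(E_k)$ carrying $C(E_k^\infty)$ onto $\mathcal{D}_{E_k}$. Amenability gives $C^*(\mathcal{G}_k) = C^*_r(\mathcal{G}_k)$, and second countability follows from the metrizability of the compact spaces involved (equivalently, from separability of $\O_k$). Combined with the previous paragraph, this already delivers an untwisted étale groupoid $\mathcal{G}_k$ with $C^*_r(\mathcal{G}_k) = \O_k$ and Cantor unit space — that is, everything in Theorem~\ref{thm:Groupoid-model-for-Ok} except principality.

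It remains to see that $\mathcal{G}_k$ is principal. By Theorem~\ref{thm:Diagonal-in-Ok} the diagonal $\mathcal{D}_{E_k}$ is a C$^*$-diagonal of $\O_k$, so the Weyl groupoid of the Cartan pair $(\O_k, \mathcal{D}_{E_k})$ is principal; by the uniqueness of the groupoid attached to a Cartan pair (\cite{renault2008cartan}), that Weyl groupoid is isomorphic to $\mathcal{G}_k$, which is therefore principal. (Alternatively, and more self-containedly, principality of $\mathcal{G}_k$ is equivalent to the absence of periodic points for the shift on $E_k^\infty$, which one can read off directly from the aperiodicity features built into the construction behind Theorem~\ref{thm:Diagonal-in-Ok}.) Assembling the three steps then proves Theorem~\ref{thm:Groupoid-model-for-Ok}; the removal of the twist is automatic, since a topological graph groupoid carries no twist.
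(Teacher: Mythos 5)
Your overall philosophy (use Katsura's topological graph machinery to get an untwisted groupoid) matches the paper, but your plan has a genuine gap at exactly the point you flag as ``the main obstacle'': you never produce the topological graph $E_k$, and the theorem essentially \emph{is} that production. A K-theory computation via the six-term sequence plus Kirchberg--Phillips can at best give \emph{some} graph with $\O(E_k)\cong\O_k$ --- for instance the ordinary graph with one vertex and $k$ edges already does this --- but says nothing about whether the canonical diagonal $\mathcal{D}_{E_k}$ is a C$^*$-diagonal or whether the path groupoid is principal (in the one-vertex example it is not). Your Step 3 cannot repair this: transferring principality from Theorem \ref{thm:Diagonal-in-Ok} via Renault's uniqueness theorem requires an isomorphism of \emph{Cartan pairs} $(\O(E_k),\mathcal{D}_{E_k})\cong(\O_k,D)$, whereas Kirchberg--Phillips only provides an isomorphism of C$^*$-algebras with no control over where the diagonal goes; and the ``read off aperiodicity from the construction'' alternative again presupposes a concrete $E_k$, which you do not have. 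So the proposal reduces Theorem \ref{thm:Groupoid-model-for-Ok} to an unproved (and nontrivial) claim that the Cartan pair of Theorem \ref{thm:Diagonal-in-Ok} is realised by the canonical diagonal of a topological graph.

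The paper avoids this issue by not asking $\O_k$ itself to be a topological graph algebra. It realises the intermediate algebra $B_{k,\alpha}$ as $\O(E)$ for the explicit graph $E^0=X$, $E^1=X\times\{1,\dots,k\}$, $d(x,i)=x$, $r(x,i)=\alpha(x)$ (Proposition \ref{prop:Iso-to-graph-algebra}), applies Katsura's theorem to get the Deaconu--Renault groupoid $\mathcal{G}_{k,\alpha}$ with Cantor unit space $X\times\{1,\dots,k\}^\N$, whose principality follows directly because the minimal homeomorphism $\alpha$ has no periodic points (Corollary \ref{cor:Groupoid-model-for-B}), and then sets $\mathcal{G}_k=\mathcal{G}_{k,\alpha}\times\mathcal{R}_\s$ with $\mathcal{R}_\s$ the AF equivalence relation groupoid of $M_\s$, so that $C^*_r(\mathcal{G}_k)\cong B_{k,\alpha}\otimes M_\s\cong\O_k$ by Theorem \ref{thm:Getting-Ok}. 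If you want to rescue your outline, you would need to supply an analogous explicit construction (either a graph for $B_{k,\alpha}$ plus a product with an AF groupoid, as in the paper, or a single graph realising the pair $(\O_k,D)$ on the nose), together with a direct verification of aperiodicity of the shift on its path space.
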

Moreover, we give a concrete description of the groupoid $\mathcal{G}_k$ as the product of the Deaconu--Renault groupoid associated to a topological graph and the groupoid of an AF equivalence relation.

The case $k=2$ of Theorem \ref{thm:Diagonal-in-Ok} was proven by Winter and the second named author (\cite{sibbel2024cantor}). Their idea was to transform the canonical Cartan subalgebra $D_{2^\infty} \subseteq \O_2$ to a C$^*$-diagonal with Cantor spectrum in a larger C$^*$-algebra $B_{2,\alpha}$, with the aid of a Cantor minimal system $\alpha$, and then prove that $\bigotimes_{n\in\N} B_{2,\alpha} \cong \O_2$ using Kirchberg's celebrated $\O_2$-absorption theorem (\cite{Ki95}). This paper provides an alternative proof, based on K-theoretic computations and the Kirchberg--Phillips theorem (\cite{Ki95,Ph00}), which can be generalised to handle $\mathcal{O}_k$ for all $2 \le k < \infty$.  

This project was motivated by a question asked by Mikael R{\o}rdam at the conference ``Noncommutativity in the North'' in Gothenburg in the Summer of 2024. He asked whether  $B_{2,\alpha}$ is already isomorphic to $\O_2$. We answer this question negatively in this paper by showing that $K_0(B_{2,\alpha}) \neq 0$. In fact, this $K_0$-group depends on the Cantor minimal system $\alpha$. 
When $\alpha$ is an odometer, the $K_0$-group we obtain is an increasing union of finite cyclic groups, whose sizes depend on the choice of odometer.
After generalising these arguments to the $\O_k$-setting, we obtain the following theorem, which gives uncountably many new examples of UCT Kirchberg algebras that contain a C$^*$-diagonal with Cantor spectrum.

\begin{thm}\label{thm:Diagonal-in-Kirchberg}
    Let $k \geq 2$ and let $(n_i)_{i\in \N}$ be a strictly increasing sequence in $\N$ such that $n_i \mid n_{i+1}$. Set $m_i = k^{n_i}-1$ for $i \in\N$. 
    Then there exists a unital Kirchberg algebra $B$ satisfying the UCT with a C$^*$-diagonal $D \subseteq B$ whose spectrum is the Cantor space, and the K-theory of $B$ is given by   
	\begin{align*}
		K_0(B) &= \varinjlim \left(\Z_{m_i},\eta_{i,i+1}\right)\\
		K_1(B) &= 0,
	\end{align*}
    where $\eta_{i,i+1}(x) = \tfrac{m_{i+1}}{m_i} x$ for $x \in \Z_{m_i}$.
\end{thm}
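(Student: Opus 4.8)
We realise $B$ as $B_{k,\alpha}$ for $\alpha$ the odometer acting on the Cantor group $Z=\varprojlim_i\Z/n_i\Z$, where the projective limit is taken along the canonical surjections $\Z/n_{i+1}\Z\to\Z/n_i\Z$ (these make sense precisely because $n_i\mid n_{i+1}$, and the resulting odometer is minimal). By the construction of $B_{k,\alpha}$, it is the reduced C$^*$-algebra of a second countable, principal, étale groupoid whose unit space is a Cantor space, and $D=C(\text{unit space})$ is a C$^*$-diagonal with Cantor spectrum; equivalently $B_{k,\alpha}\cong\O_{E_\alpha}$ is the C$^*$-algebra of the topological graph $E_\alpha$ with vertex space $Z$, edge space $Z\times\{1,\dots,k\}$, source map the projection, and range map incorporating the shift of $\alpha$. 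That $B_{k,\alpha}$ is a unital Kirchberg algebra satisfying the UCT is checked exactly as in the case $Z=\mathrm{pt}$ underlying Theorem~\ref{thm:Diagonal-in-Ok}: it is unital since the unit space is compact; nuclear, separable and UCT since the groupoid is amenable (indeed $B_{k,\alpha}\otimes\K$ is a crossed product of an AF algebra by $\Z$, namely $(C(Z,M_{k^\infty})\otimes\K)\rtimes\Z$); simple since $\alpha$ is minimal and the groupoid is effective; and purely infinite by the purely-infinite-groupoid criterion of \cite{brown2016purely} (alternatively, because this crossed product scales a trace and hence has no tracial state). It remains only to compute $K_*(B_{k,\alpha})$.

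For the K-theory I would feed the topological graph $E_\alpha$ into Katsura's six-term exact sequence (\cite{katsura2004class}). Since every vertex emits exactly $k$ edges and receives edges homeomorphically via $\alpha$, the sequence collapses to
\begin{equation*}
0\longrightarrow K_1(B_{k,\alpha})\longrightarrow C(Z,\Z)\xrightarrow{\ \id-k\alpha_*\ }C(Z,\Z)\longrightarrow K_0(B_{k,\alpha})\longrightarrow 0,
\end{equation*}
where $K_0(C(Z))=C(Z,\Z)$, $K_1(C(Z))=0$, and $\alpha_*$ is the automorphism of $C(Z,\Z)$ induced by $\alpha$ (when $Z=\mathrm{pt}$ this reads $\Z\xrightarrow{1-k}\Z$, recovering $K_0(\O_k)=\Z_{k-1}$, $K_1(\O_k)=0$). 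Writing $C(Z,\Z)=\varinjlim_i(\Z^{n_i},\pi_i^*)$ with $\Z^{n_i}=C(\Z/n_i\Z,\Z)$ and $\pi_i^*$ the pull-back along $\Z/n_{i+1}\Z\to\Z/n_i\Z$, the operator $\alpha_*$ restricts on $\Z^{n_i}$ to the cyclic shift $\tau_{n_i}$.

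Now I compute at each finite stage. Identifying $\Z^N=\Z[X]/(X^N-1)$ with $\tau_N$ acting as multiplication by $X$, one obtains
\begin{equation*}
\coker\bigl(\id-k\tau_N\colon\Z^N\to\Z^N\bigr)\cong\Z[X]/(X^N-1,\,1-kX)\cong\Z[k^{-1}]/(k^N-1)\cong\Z/(k^N-1)\Z,
\end{equation*}
using that $\Z[X]/(1-kX)\cong\Z[k^{-1}]$ and that $k$ is already invertible modulo $k^N-1$. With $N=n_i$ this is $\Z_{m_i}$, so $K_0(B_{k,\alpha})=\varinjlim_i(\Z_{m_i},\overline{\pi_i^*})$. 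A short calculation shows that $\pi_i^*$ is multiplication by $1+X^{n_i}+\dots+X^{(d_i-1)n_i}$ with $d_i=n_{i+1}/n_i$, and that under the isomorphisms above this becomes multiplication by $k^{n_i-n_{i+1}}\cdot\tfrac{m_{i+1}}{m_i}$ on $\Z_{m_{i+1}}$. The unit $k^{n_i-n_{i+1}}\in(\Z/m_{i+1}\Z)^\times$ can be absorbed into the choice of isomorphisms $\coker(\id-k\tau_{n_i})\cong\Z_{m_i}$, since the reduction map $(\Z/m_{i+1}\Z)^\times\to(\Z/m_i\Z)^\times$ is surjective; the connecting maps then become exactly $\eta_{i,i+1}(x)=\tfrac{m_{i+1}}{m_i}x$, giving the claimed $K_0$. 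For $K_1$, note that $\id-k\tau_N$ is injective over $\Z$, as its complexified determinant $\prod_{\zeta^N=1}(1-k\zeta)$ is non-zero (because $k>1$); injectivity is inherited by the inductive limit because $\pi_i^*$ intertwines the $\tau$'s, whence $K_1(B_{k,\alpha})=\ker(\id-k\alpha_*)=0$.

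The main obstacle is pinning down the topological graph $E_\alpha$ — equivalently, the Hilbert $C(Z)$-bimodule attached to $B_{k,\alpha}$ — and its induced endomorphism of $K_0(C(Z))$ precisely enough to know that the relevant operator is $\id-k\alpha_*$ rather than, say, $\id-k\alpha_*^{-1}$; this does not change the final groups but requires care with the conventions of \cite{katsura2004class} and with the stabilisation identifications. The remaining work — computing $\pi_i^*$ on the cokernels and reconciling the unit factors $k^{n_i-n_{i+1}}$ with $\eta_{i,i+1}$ — is routine; the substantive point is the clean circulant identity $\coker(\id-kX)\cong\Z_{k^N-1}$, which is what manufactures the finite cyclic groups $\Z_{m_i}$.
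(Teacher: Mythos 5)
Your overall strategy matches the paper's: take $B=B_{k,\alpha}$ for the odometer $\alpha$ associated to $(n_i)$, verify it is a unital UCT Kirchberg algebra with a Cantor-spectrum diagonal, and compute the K-theory. For the diagonal, the paper uses Proposition~\ref{prop:Diagonal-in-B} directly (with the groupoid/topological-graph picture only developed later, in Proposition~\ref{prop:Iso-to-graph-algebra} and Corollary~\ref{cor:Groupoid-model-for-B}); your route through the Deaconu--Renault groupoid of the graph is available and correct, it just front-loads that Section~\ref{sec:groupoid-model} material. Where you genuinely diverge is the K-theory: the paper stabilises via Proposition~\ref{prop:Sibbel-Winter}(iii) and runs Pimsner--Voiculescu on $C(X,\K M_{k^\infty})\rtimes_{\Phi_k\otimes\alpha}\Z$, so its operator is $\id-\tfrac1k T$ on $C_{lc}(X,\Z[k^{-1}])$ (Propositions~\ref{prop:general-K-theory}, \ref{prop:K-theory-Zn-cross-product}, \ref{prop:odometer-crossed-product}), whereas you use Katsura's six-term sequence for the graph algebra, with operator $\id-k\alpha_*$ on $C(Z,\Z)$ and the circulant identity $\Z[X]/(X^N-1,\,1-kX)\cong\Z_{k^N-1}$; the paper's footnote in Section~\ref{sec:K-theory} explicitly acknowledges this alternative. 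Your finite-stage computation, the identification of the connecting maps up to the unit $k^{n_i-n_{i+1}}$, the absorption of those units via surjectivity of $(\Z/m_{i+1}\Z)^\times\to(\Z/m_i\Z)^\times$, and the injectivity argument giving $K_1=0$ are all sound (and since kernels and cokernels commute with direct limits, working stagewise is legitimate); the $\alpha_*$ versus $\alpha_*^{-1}$ convention issue you flag indeed does not affect the groups.

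The genuine gap is pure infiniteness, which you treat as routine. Your parenthetical alternative --- the crossed product scales a trace, hence is traceless, hence purely infinite --- is not a valid inference: for simple separable nuclear C$^*$-algebras, absence of traces does not imply pure infiniteness without further regularity (e.g.\ $\mathcal{Z}$-stability or strict comparison); this is precisely the kind of implication that cannot be assumed. Your primary appeal to \cite{brown2016purely} is also not push-button: its hypotheses amount to comparison/paradoxicality-type conditions that must be verified for this groupoid, which is work of the same nature as what the paper actually does. In the paper this is the one structural property requiring real effort: Lemma~\ref{lem:Rordam-Sierakowski} specialises the R{\o}rdam--Sierakowski criterion of \cite{rordam2012purely} to $\Z$-actions on stable AF algebras (using Cuntz-semigroup facts), and Lemma~\ref{lem:purely-infinite} proves $q\precsim p$ in the crossed product for every nonzero projection $p$ and every projection $q$ in $C(X,\K M_{k^\infty})$, by using minimality of $\alpha$ to spread a small subprojection of $p$ over all of $X$ and the trace-scaling of $\Phi_k$ to dominate $q$. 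Some argument of this substance (or an equally concrete verification of the hypotheses of \cite{brown2016purely}) must be supplied before your proof of Theorem~\ref{thm:Diagonal-in-Kirchberg} is complete; the remaining structural claims (unitality, separability, nuclearity, simplicity, UCT) are as routine as you indicate.
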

 
We deduce Theorem~\ref{thm:Diagonal-in-Ok} from Theorem \ref{thm:Diagonal-in-Kirchberg} by taking the tensor product with a suitable UHF algebra in order to annihilate the undesired torsion elements. This results in a UCT Kirchberg algebra that is isomorphic to $\O_k$ by the Kirchberg--Phillips theorem. Since UHF algebras contain a canonical C$^*$-diagonal with Cantor spectrum this completes the proof. Theorem \ref{thm:Groupoid-model-for-Ok} is deduced by realising $B_{2,\alpha}$ and its generalisations as topological graph C$^*$-algebras.

\subsection*{Structure of paper}
Section \ref{sec:preliminaries} sets up notation and recalls the necessary preliminary results. 
In Section \ref{section:Construction}, we generalise the first step of the construction from \cite{sibbel2024cantor} to the $\O_k$-setting and show that the resulting C$^*$-algebras $B_{k,\alpha}$ are unital UCT Kirchberg algebras. 
In Section \ref{sec:K-theory}, we compute the K-theory of $B_{k,\alpha}$, answer R{\o}rdam's question and prove Theorem \ref{thm:Diagonal-in-Kirchberg}. 
In Section \ref{sec:main-theorem}, we deduce Theorem \ref{thm:Diagonal-in-Ok}. In Section \ref{sec:groupoid-model}, we prove Theorem \ref{thm:Groupoid-model-for-Ok}.

\section{Preliminaries}\label{sec:preliminaries}
\renewcommand{\thethm}{\arabic{thm}}
\numberwithin{thm}{section}

\subsection{Notation}\label{subsec:notation}
Let $k \in \N$. We write $\Z_k = \Z / k\Z$ for the cyclic group of order $k$. An element $x + k\Z \in \Z_k$ will be abbreviated to $x$, provided there is no risk of confusion. We write $\Z[k^{-1}]$ for the subring of $\Q$ generated by $\Z$ and $k^{-1}$.
We write $M_k$ for the $k \times k$ matrices and $\K$ for the C$^*$-algebra of compact operators on $\ell^2(\N)$. We reserve the notation $e_{ij}$ for the standard basis elements in $M_k$ and $\K$.
 
Let $A$ be a C$^*$-algebra. We write $A_+$ for the positive elements in $A$. We denote Murray--von Neumann subequivalence of projections $p,q \in A$ by $p \precsim q$, and we use the same notation for Cuntz subequivalence of positive elements, since these notions of subequivalence coincide for projections; see for example \cite[Lemma 2.8]{GardellaPerera}.
We write $V(A)$ for the Murray--von Neumann semigroup of $A$ and $\Cu(A)$ for the Cuntz semigroup of $A$.

Let $A$ be a C$^*$-algebra and $X$ be a compact Hausdorff space. We write $C(X,A)$ for the C$^*$-algebra of continuous functions $X \rightarrow A$, which we identify with the tensor product 
$A \otimes C(X)$. 
Let $G$ be an ordered group. We write $C_{lc}(X,G)$ for the ordered group of locally constant functions $X \rightarrow G$ with pointwise order.

We write $M_\s$ for the UHF algebra with supernatural number $\s$ and use the abbreviation $\K M_\s = \K \otimes M_\s$ for its stabilisation. We write $\tr_\s$ for the unique tracial state on $M_\s$ and set $\Tr_\s = \Tr_\K \otimes \tr_\s$. 

For $2 \leq k < \infty$, we recall that the Cuntz algebra $\O_k$ is the universal C$^*$-algebra generated by isometries $S_1,\ldots,S_k$ such that $\sum_{i=1}^k S_iS_i^* = 1_{\O_k}$. We write $S_\mu = S_{\mu_1}S_{\mu_2}\cdots S_{\mu_n}$ for $\mu \in \{1,2,\ldots,k\}^n$.
In \cite[Section 2.1]{cuntz1977simple}, Cuntz showed that the stabilised Cuntz algebra $\K \otimes \O_k$ is isomorphic to the crossed product $\K M_{k^\infty} \rtimes_{\Phi_k} \Z$ for $2 \leq k < \infty$, where $\Phi_k$ is an automorphism of $\K M_{k^\infty}$ that scales the trace by $\frac{1}{k}$. We shall refer to $\Phi_k$ as \emph{the trace scaling automorphism} on $\K M_{k^\infty}$. It is uniquely determined up to approximate unitary equivalence by the fact that it scales the trace by $\frac{1}{k}$. 

For completeness, we recall the construction of the trace scaling automorphism.
Let $\phi_k : \K \to \K \otimes M_k$ be an isomorphism with $\phi_k(e_{11}) = e_{11} \otimes e_{11}$.
Since $M_{k^\infty} \cong \bigotimes_{n \in \N} M_k$, there is a canonical isomorphism $\psi_k: M_k \otimes M_{k^\infty} \rightarrow M_{k^\infty}$ defined by relabelling the tensor factors.
Let $\Phi_k : \K M_{k^\infty} \to \K M_{k^\infty}$ be the composition
\begin{equation}\label{eqn:defPhi}
    \mathcal{K} \otimes M_{k^\infty}  \xrightarrow{\phi_k \otimes \id_{M_{k^\infty}}} \mathcal{K} \otimes M_k \otimes M_{k^\infty} \xrightarrow{\id_\K \otimes \psi_k} \K \otimes M_{k^\infty}.
\end{equation}
Then $\Tr_{k^\infty}(\Phi_k(x)) = \tfrac{1}{k}\Tr_{k^\infty}(x)$ for all elements $x \in \K M_{k^\infty}$ of finite trace.

\subsection{Cartan subalgebras and \texorpdfstring{C$^*$}{C*}-diagonals}
\label{subsec:Diagonals}

Both Cartan subalgebras and C$^*$-diagonals are natural classes of maximal abelian subalgebras that have close connections to groupoid C$^*$-algebras. In this subsection,  we recall the definitions, and in the next subsection we shall review the connection to groupoid C$^*$-algebras. 

\begin{dfn}[{cf.\ \cite[Definition 5.1]{renault2008cartan}}]
Let $A$ be a C$^*$-algebra. Then $D \subseteq A$ is called a \emph{Cartan subalgebra} if 
\begin{enumerate}
\item[(0)] $D$ contains an approximate unit of $A$,
\item[(1)] $D$ is a maximal abelian sub-C$^*$-algebra of $A$,
\item[(2)] $A$ is generated by  $\mathcal{N}_A(D) = \{n \in A : n^*Dn \subseteq D \text{ and } nDn^* \subseteq D\}$,
\item[(3)] there exists a faithful conditional expectation of $A \rightarrow D$.
\end{enumerate}
Moreover, $D \subseteq A$ is called a C$^*$-\textit{diagonal} if additionally
\begin{enumerate}
\item[(4)] every pure state on $D$ has a unique extension to a pure state on $A$.
\end{enumerate}
\end{dfn}
Condition (0) is easily seen to be redundant in the unital case, as $1_A \in D$ by maximality. In fact, it is now known to be redundant in general (\cite{pitts2021normalizers}), so can be safely removed from the definition. The original definition of C$^*$-diagonals is due to Kumjian (\cite{kumjian1986c}) and can be shown to be equivalent to the one given above.

\subsection{Étale Groupoids}
\label{subsec:Groupoids}

We assume the reader has a basic understanding of topological groupoids (see for example \cite{renault1980groupoid, renault2008cartan}). Given a topological groupoid $\G$, we write $\G^{(0)}$ for the unit space of $\G$ and $r,s:\G \rightarrow \G^{(0)}$ for the range and source maps. All topological groupoids considered in this paper will be locally compact, Hausdorff, and étale. For brevity, we shall refer to this class of groupoids as \emph{étale groupoids} with the other hypotheses implicit. 

For each $x \in \G^{(0)}$, the group $\G_x^x = \{g \in \G: r(g) = s(g) = x\}$ is called the \emph{isotropy subgroup} of $\G$ at $x$.
The groupoid $\G$ is said to be \emph{topologically principal} if the set of all $x \in \G^{(0)}$ with $\G_x^x = \{x\}$ is dense in $\G^{(0)}$, and is said to be \emph{principal} if  $\G_x^x = \{x\}$ for all $x \in \G^{(0)}$.

We write $C_r^*(\G,\Sigma)$ for the reduced C$^*$-algebra of the étale groupoid $\G$ with twist $\Sigma$ (see \cite[Section 4]{renault2008cartan} for details of the construction).
Building on work of Kumjian (\cite{kumjian1986c}), Renault established a 1-to-1 correspondence between Cartan subalgebras and twisted étale groupoids. The following theorem summarises \cite[Theorems 5.2 and 5.9]{renault2008cartan} and \cite[Proposition 5.11]{renault2008cartan}.

\begin{thm}[{\cite{renault2008cartan}}]
    Let $\G$ be a second countable, topologically principal, étale groupoid and $\Sigma$ be a twist. Then $C_0(\G^{(0)}) \subseteq C_r^*(\G,\Sigma)$ is a Cartan subalgebra. Conversely, up to isomorphism, every Cartan subalgebra $D \subseteq A$ with $A$ separable is of this form, and the twisted étale groupoid $(\G, \Sigma)$ is uniquely determined up to isomorphism. Moreover, under this 1-1 correspondence, $D \subseteq A$ is a C$^*$-diagonal if and only if $\G$ is principal.          
\end{thm}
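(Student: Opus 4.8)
This is Renault's reconstruction theorem, so the plan is to reproduce the argument of \cite{renault2008cartan}, split into the forward direction, the converse, and the characterisation of C$^*$-diagonals. For the forward direction, given a second countable, topologically principal étale groupoid $G$ with twist $\Sigma$, I would verify axioms (0)--(3) for $D := C_0(G^{(0)}) \subseteq A := C_r^*(G,\Sigma)$. Axiom (0) is immediate, since $C_c(G^{(0)})$ contains an approximate unit for $A$. Axiom (3) is witnessed by the canonical conditional expectation $P \colon A \to D$, obtained by restricting a section over $\Sigma$ to the unit fibre; $P$ is faithful precisely because we work with the \emph{reduced} C$^*$-algebra, via the $L^2$-estimate coming from the regular representation. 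For axiom (2), functions in $C_c(\Sigma)$ supported over an open bisection $U \subseteq G$ are normalizers (since $U^{-1} \cdot G^{(0)} \cdot U \subseteq G^{(0)}$ forces $n^*Dn, nDn^* \subseteq D$), and such functions span a dense $*$-subalgebra of $A$ because $G$ is étale. The only place topological principality enters is axiom (1): if $a \in A$ commutes with $D$, then $a - P(a)$ also commutes with $D$, has zero expectation, and must be ``supported'' on the isotropy bundle $\{g : r(g)=s(g)\}$; as $G$ is étale, topological principality forces $\{g \notin G^{(0)} : r(g)=s(g)\}$ to have empty interior, and a standard argument with the faithful $P$ then gives $a - P(a)=0$, so $D$ is maximal abelian.

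\textbf{Converse direction.} Given a Cartan pair $D \subseteq A$ with $A$ separable, I would build the Weyl groupoid: set $X = \widehat{D}$ and observe that each normalizer $n \in \mathcal{N}_A(D)$ induces a partial homeomorphism $\alpha_n$ between open subsets of $X$, coming from the action $d \mapsto n^* d n$ on $D$. Let $G$ be the groupoid of germs of the pseudogroup $\{\alpha_n\}$, and let $\Sigma$ be the $\mathbb{T}$-central extension recording the residual phase, namely that two normalizers with the same germ differ locally by a unitary in $D$. One then constructs an isomorphism $C_r^*(G,\Sigma) \xrightarrow{\ \cong\ } A$ carrying $C_0(G^{(0)})$ onto $D$, by sending a function supported over a bisection to the normalizer it represents; well-definedness uses that $\mathcal{N}_A(D)$ generates $A$, and isometry for the reduced norm again uses the faithful conditional expectation. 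Uniqueness is then automatic, since the whole construction is canonical: an isomorphism of Cartan pairs transports normalizers to normalizers and hence induces an isomorphism of the associated groupoids and twists.

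\textbf{C$^*$-diagonal versus principal.} Finally I would analyse axiom (4) groupoid-theoretically. Fix $x \in G^{(0)}$; the corresponding pure state of $D$ is the evaluation $\mathrm{ev}_x$, and its extensions to states of $A = C_r^*(G,\Sigma)$ are governed by the twisted group C$^*$-algebra of the isotropy group $G_x^x$, visible inside the regular representation on $\ell^2(G_x)$. If $G$ is principal, then $G_x^x = \{x\}$ for every $x$, this algebra is one-dimensional, and $\mathrm{ev}_x$ has a unique state extension, which is automatically pure; hence (4) holds and $D \subseteq A$ is a C$^*$-diagonal. Conversely, if $G_x^x \neq \{x\}$ for some $x$, the isotropy group is a nontrivial discrete group whose (twisted) group C$^*$-algebra carries at least two distinct pure states, and these pull back to distinct pure-state extensions of $\mathrm{ev}_x$, so (4) fails.

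\textbf{Main obstacle.} The technical heart is the converse direction: proving that the Weyl groupoid is Hausdorff, étale and second countable, that the twist is a genuine locally trivial $\mathbb{T}$-bundle, and above all that the reconstruction map $C_r^*(G,\Sigma) \to A$ is isometric for the reduced norm. This last step is where separability of $A$ and faithfulness of the conditional expectation are used in an essential way, and it is considerably more delicate than the verification of the Cartan axioms in the forward direction.
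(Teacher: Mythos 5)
The paper does not prove this statement at all: it is quoted as background from Renault's work (Theorems 5.2 and 5.9 and Proposition 5.11 of \cite{renault2008cartan}, building on \cite{kumjian1986c}), so there is no in-paper argument to compare against. Your sketch correctly follows the standard Kumjian--Renault route — verifying the Cartan axioms for $C_0(G^{(0)}) \subseteq C_r^*(G,\Sigma)$ with topological principality entering only for maximal abelianness, reconstructing the Weyl groupoid and twist from the normalizers in the converse, and analysing condition (4) via the isotropy groups — which is precisely the argument of the cited source.
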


A useful corollary of this 1-1 correspondence is that the minimal tensor product of Cartan subalgebras (of separable C$^*$-algebras) is again a Cartan subalgebra by considering the product groupoid
(see \cite[Lemma 5.1]{barlak2017cartan}). As the product of principal groupoids is again principal, we see that the minimal tensor product of C$^*$-diagonals (of separable C$^*$-algebras) is again a C$^*$-diagonal.  

\subsection{K-theoretic preliminaries}
Let $\s$ be a supernatural number. We recall that Murray--von Neumann subequivalence in $\K M_\s$ is determined by $\Tr_\s$ and 
\begin{align}\label{eqn:K0-UHF}
    K_0(M_\s) \cong \left\{\frac{a}{b}: a, b \in \Z, b | \s \right\} \subseteq \Q
\end{align}
as ordered groups. In this paper, we shall treat \eqref{eqn:K0-UHF} as an identification using the isomorphism induced by $\Tr_\s$. 
We make the following observation about quotients of $K_0(M_\s)$. 
\begin{prp}\label{prop:K0-UHF-quotient}
    Let $M_\s$ be the UHF algebra with supernatural number $\s$.
    Let $m \in \N$ be coprime to $\s$.
    Then there is an isomorphism 
    \begin{equation}
        \frac{K_0(M_\s)}{mK_0(M_\s)} \cong \Z_m 
    \end{equation}
    given by $a/b + mK_0(M_\s) \mapsto ab^{-1}$ where $b^{-1}$ denotes the multiplicative inverse of $b$ in the ring $\Z_m$. 
\end{prp}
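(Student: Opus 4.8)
The plan is to argue directly from the explicit identification $K_0(M_\s) = \{a/b : a,b\in\Z,\ b\mid\s\}\subseteq\Q$ recorded in \eqref{eqn:K0-UHF}, and to show that the proposed assignment $\varphi(a/b) = ab^{-1}\in\Z_m$ is a well-defined, surjective group homomorphism $K_0(M_\s)\to\Z_m$ whose kernel is exactly $mK_0(M_\s)$; the conclusion then follows from the first isomorphism theorem. Before that, I would isolate two elementary facts that make the argument run cleanly. First, since $m$ is coprime to $\s$, every positive integer $b$ with $b\mid\s$ is coprime to $m$, hence invertible in $\Z_m$, so the formula makes sense. Second, the set $D = \{b\in\N : b\mid\s\}$ is closed under least common multiples: comparing $p$-adic valuations against the exponents of $\s$ shows $v_p(\mathrm{lcm}(b,d)) = \max(v_p(b),v_p(d))$ lies below the corresponding exponent of $\s$ whenever $b,d\in D$. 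Consequently any finite collection of elements of $K_0(M_\s)$ admits a common denominator lying in $D$.

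Next I would verify that $\varphi$ is well defined: if $a/b = c/d$ with $b,d\in D$, then $ad = bc$ in $\Z$, and reducing modulo $m$ and multiplying by the units $b^{-1}d^{-1}$ gives $ab^{-1} = cd^{-1}$ in $\Z_m$. For additivity, given $a/b, c/d\in K_0(M_\s)$, put $L = \mathrm{lcm}(b,d)\in D$ and write $a/b = a'/L$, $c/d = c'/L$; then $\varphi(a/b + c/d) = \varphi((a'+c')/L) = (a'+c')L^{-1} = a'L^{-1} + c'L^{-1} = \varphi(a/b) + \varphi(c/d)$, using well-definedness to recognise $\varphi(a'/L) = \varphi(a/b)$ and $\varphi(c'/L) = \varphi(c/d)$. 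Surjectivity is immediate, since $\varphi$ restricts on the subgroup $\Z\subseteq K_0(M_\s)$ to the canonical quotient map $\Z\to\Z_m$.

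For the kernel, the inclusion $mK_0(M_\s)\subseteq\ker\varphi$ is clear, as $\varphi(m\cdot a/b) = m a b^{-1} = 0$ in $\Z_m$. Conversely, if $\varphi(a/b) = ab^{-1} = 0$ in $\Z_m$, then, $b^{-1}$ being a unit, this forces $m\mid a$; writing $a = ma'$ gives $a/b = m(a'/b)\in mK_0(M_\s)$. Hence $\ker\varphi = mK_0(M_\s)$, and $\varphi$ descends to the asserted isomorphism $K_0(M_\s)/mK_0(M_\s)\xrightarrow{\ \cong\ }\Z_m$, $a/b + mK_0(M_\s)\mapsto ab^{-1}$.

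I do not expect a real obstacle here; the only subtlety is that $D$ is \emph{not} closed under products (for instance $\s = 6$, $b = d = 2$), so additivity must be handled via least common multiples rather than naively clearing denominators by multiplication — this is the role of the second preliminary fact. As a cross-check one could also observe, writing $M_\s = \varinjlim M_{k_n}$ with $d_n = k_{n+1}/k_n$, that $K_0(M_\s)/mK_0(M_\s) = K_0(M_\s)\otimes_\Z\Z_m = \varinjlim\bigl(\Z_m\xrightarrow{\,\cdot d_n\,}\Z_m\bigr)$, in which every connecting map is an isomorphism because $d_n\mid\s$ is coprime to $m$, so the colimit is $\Z_m$; but the direct computation above is shorter and supplies the explicit formula at no extra cost.
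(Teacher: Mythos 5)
Your proposal is correct and follows essentially the same route as the paper: define $a/b \mapsto ab^{-1}$ on $K_0(M_\s)$ using that denominators dividing $\s$ are invertible mod $m$, identify the kernel as $mK_0(M_\s)$, and conclude by the first isomorphism theorem. You simply spell out the well-definedness, additivity (via common denominators) and surjectivity checks that the paper leaves implicit.
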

\begin{proof}
    Let $b \in \Z$ with  $b | \s$. Since $m$ is coprime to $\s$, it follows that $b$ is coprime to $m$. Hence, $b$ has a multiplicative inverse $b^{-1}$ in $\Z_{m}$. So, there is a well-defined group homomorphism $\theta:K_0(M_\s) \rightarrow \Z_{m}$ given by $a/b \mapsto ab^{-1}$. Since $b^{-1}$ has multiplicative inverse $b$ in the ring $\Z_m$, we see that the kernel of $\theta$ is $m K_0(M_\s)$. The result now follows by the first isomorphism theorem.  
\end{proof}

In the following proposition, we collect some basic results about the algebras $C(X,\K M_\s)$ where $X$ is a zero-dimensional compact metric space. Since these results are likely well-known to experts, we provide only a sketch of the proof. Recall from Section~\ref{subsec:notation} that $C_{lc}(X, K_0(M_\s))$ is the ordered group of locally constant functions $X \rightarrow K_0(M_\s)$ with the pointwise order.\footnote{If we endow $K_0(M_\s)$ with the discrete topology, then we could write $C(X, K_0(M_\s))$ in place of $C_{lc}(X, K_0(M_\s))$. This would be justified as this is the topology on the $K_0$-group induced by the norm topology on the C$^*$-algebra. However, we have chosen to use the more explicit notation in case the reader is tempted to endow $K_0(M_\s)$ with the topology induced by $\R$.}
\begin{prp}\label{prop:K-theory-functions-into-UHF}
    Let $X$ be a zero-dimensional compact metric space. Let $\K M_\s$ be the stabilised UHF algebra with supernatural number $\s$. 
    Then $C(X, \K M_{\s})$ is a stable AF algebra with 
    \begin{align}
        V(C(X, \K M_{\s})) &\cong C_{lc}(X, K_0(M_\s))_+, \label{eqn:premlin-computation-V}\\
        K_0(C(X, \K M_{\s})) &\cong C_{lc}(X, K_0(M_\s)), \label{eqn:premlin-computation-K-0}\\
        K_1(C(X, \K M_{\s})) &= 0. \label{eqn:premlin-computation-K-1}
    \end{align}
    Moreover, in the case of $V$ and $K_0$, the isomorphism is induced by the composition with $\Tr_\s$ and is an isomorphism of ordered groups.
\end{prp}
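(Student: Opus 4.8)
The plan is to reduce everything to the $K$-theory of $\K M_\s$ by using that $C(X,\K M_\s)$ is an inductive limit of $C^*$-algebras associated with clopen partitions of $X$.  First I would recall that, since $X$ is a zero-dimensional compact metric space, the algebra of continuous functions $C(X)$ is an AF algebra: it is the inductive limit of the finite-dimensional subalgebras $C_{\mathcal P}(X) = \bigoplus_{U \in \mathcal P} \C \cdot \mathbbm{1}_U$ as $\mathcal P$ ranges over a cofinal sequence of finite clopen partitions of $X$.  Tensoring with the AF algebra $\K M_\s$, we get that $C(X,\K M_\s) \cong C(X) \otimes \K M_\s$ is the inductive limit over $\mathcal P$ of the algebras $\bigoplus_{U \in \mathcal P} \K M_\s$, each of which is a (finite direct sum of) stable AF algebra(s), hence stable AF; an inductive limit (with the obvious connecting maps) of stable AF algebras is again a stable AF algebra, which gives the first assertion.

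Next I would compute the invariants stage by stage.  For a single clopen partition $\mathcal P$, continuity of $V$, $K_0$, $K_1$ under finite direct sums gives $V(\bigoplus_{U\in\mathcal P}\K M_\s) \cong \bigoplus_{U\in\mathcal P} V(\K M_\s) \cong \bigoplus_{U\in\mathcal P} K_0(M_\s)_+$, and similarly $K_0 \cong \bigoplus_{U\in\mathcal P} K_0(M_\s)$ and $K_1 = 0$, where I use that $K_0(\K M_\s) \cong K_0(M_\s)$ via $\Tr_\s$, that $V(\K M_\s) \cong K_0(M_\s)_+$ since $\K M_\s$ is stable (so every projection is properly infinite-free and the semigroup embeds in its enveloping group), and that $K_1(\K M_\s) = K_1(M_\s) = 0$.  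The group $\bigoplus_{U\in\mathcal P} K_0(M_\s)$ is naturally identified with the locally constant functions $X \to K_0(M_\s)$ that are constant on each block of $\mathcal P$, and under this identification the connecting map for a refinement $\mathcal P' \geq \mathcal P$ is just the inclusion of one group of locally constant functions into the other.  Taking the inductive limit over the cofinal sequence of partitions, and using continuity of $K$-theory (and of $V$, together with the fact that $V$ commutes with inductive limits), I obtain $K_0(C(X,\K M_\s)) \cong \varinjlim \bigoplus_{U\in\mathcal P} K_0(M_\s) \cong C_{lc}(X,K_0(M_\s))$, $V(C(X,\K M_\s)) \cong C_{lc}(X,K_0(M_\s))_+$, and $K_1 = 0$.

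Finally I would check that these identifications are compatible with the $\Tr_\s$-induced maps and respect the order.  At each finite stage, the isomorphism $V(\bigoplus_{U\in\mathcal P}\K M_\s) \cong \bigoplus_{U\in\mathcal P} K_0(M_\s)_+$ is, by construction, induced by applying $\Tr_\s$ in each coordinate, i.e.\ it sends a projection $p \in C_{\mathcal P}(X,\K M_\s)$ to the function $x \mapsto \Tr_\s(p(x))$; since the connecting maps are inclusions of function groups, the limiting isomorphism is likewise induced by $x \mapsto \Tr_\s(p(x))$.  Ordered-group compatibility then follows because Murray--von Neumann subequivalence in each $\K M_\s$ is detected by $\Tr_\s$ (as recalled before \eqref{eqn:K0-UHF}), and subequivalence of projections in $C(X,\K M_\s) \cong C(X)\otimes \K M_\s$ with locally constant rank is fibrewise over the (finitely many) blocks on which both projections are constant — so $p \precsim q$ if and only if $\Tr_\s(p(x)) \leq \Tr_\s(q(x))$ for all $x$, which is exactly the pointwise order on $C_{lc}(X,K_0(M_\s))$.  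I expect the main obstacle to be the bookkeeping in this last point: one must be slightly careful that two projections over $X$ can always be simultaneously approximated by projections constant on a common finite clopen partition, so that the fibrewise comparison argument applies; this is where the zero-dimensionality of $X$ and semiprojectivity/stability of the finite-dimensional building blocks are used, and it is exactly the kind of routine-but-fiddly verification that the authors signpost by offering only a sketch.
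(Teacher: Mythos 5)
Your proposal is correct, but it is organised differently from the paper's proof. You realise $C(X,\K M_\s)$ as an inductive limit over a refining sequence of finite clopen partitions of $X$ and then invoke continuity of $V$, $K_0$ and $K_1$ under sequential inductive limits, identifying $\varinjlim_{\mathcal P}\bigoplus_{U\in\mathcal P}K_0(M_\s)$ with $C_{lc}(X,K_0(M_\s))$. The paper instead argues directly on the limit algebra: stability and AF-ness give $K_1=0$ and reduce $V$ and $K_0$ to projections, and the key step is a partition-of-unity perturbation showing that every projection in $C(X,\K M_\s)$ lies within norm distance $\tfrac{1}{2}$ of --- hence is Murray--von Neumann equivalent to, by \cite[Proposition 4.6.6]{Bla86} --- a locally constant projection-valued function, after which all statements, including the order statement, are checked pointwise using that subequivalence in $\K M_\s$ is determined by $\Tr_\s$. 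The two routes use the same ingredients (zero-dimensionality to produce locally constant data, trace-determined comparison in $\K M_\s$); yours outsources the perturbation to the black box of continuity of the invariants, at the cost of the extra bookkeeping you yourself flag (cofinality of the partition sequence, identifying the limit group with $C_{lc}(X,K_0(M_\s))$, and checking that the order passes to the limit), whereas the paper's single approximation lemma handles $V$, $K_0$ and the ordered-group statement in one stroke. Two minor remarks: stability of $C(X,\K M_\s)$ needs no inductive-limit argument, since the algebra is $(C(X)\otimes M_\s)\otimes\K$ (an inductive limit of $\sigma$-unital stable algebras is indeed stable, cf.\ \cite{hjelmborg1998stability}, but that is overkill here); and your final fibrewise comparison of two projections is legitimate precisely because, as you note, both can be replaced up to equivalence by projections constant on a common finite clopen partition --- which is exactly the paper's approximation step in disguise.
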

\begin{proof}
    As $X$ is a zero-dimensional compact metric space, $C(X)$ is an AF algebra. Hence, $C(X, \K M_{\s}) \cong C(X) \otimes M_{\s} \otimes \K$ is a stable AF algebra.
    Since, $C(X, \K M_{\s})$ is AF, \eqref{eqn:premlin-computation-K-1} is immediate. Since $C(X, \K M_{\s})$ is stable to show \eqref{eqn:premlin-computation-V} and \eqref{eqn:premlin-computation-K-0} it suffices to consider projections in $C(X, \K M_{\s})$.

    Let $p \in C(X, \K M_{\s})$ be a projection. As $X$ is zero-dimensional and compact, a standard partition of unity argument shows that there exists a locally constant projection-valued function $p_0:X \rightarrow \K M_{\s}$ with $\|p-p_0\| < \frac{1}{2}$. By \cite[Proposition 4.6.6]{Bla86}, $p$ and $p_0$ are Murray--von Neumann equivalent. Hence, it suffices to consider locally constant projection-valued function. Since Murray--von Neumann equivalence in $\K M_{\s}$ is determined by $\Tr_\s$, we can deduce \eqref{eqn:premlin-computation-V} and \eqref{eqn:premlin-computation-K-0} by pointwise arguments.     
\end{proof}

One of the main technical tools in this paper is the Künneth theorem for K-theory. We shall only need the following special case.  
\begin{prp}\label{prop:UHF-Kunneth}
    Let $A$ be a C$^*$-algebra and $M_\s$ be a UHF algebra. Then 
    \begin{align}
         K_0(A \otimes M_\s) &\cong K_0(A) \otimes K_0(M_\s),\label{eqn:UHF-Kunneth-K0}\\
         K_1(A \otimes M_\s) &\cong K_1(A) \otimes K_0(M_\s).
    \end{align}
    Moreover, the isomorphism \eqref{eqn:UHF-Kunneth-K0} maps $[p \otimes q]$ to $[p] \otimes [q]$ for all projections $p \in A$ and $q \in M_\s$.
\end{prp}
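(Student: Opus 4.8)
The plan is to prove Proposition~\ref{prop:UHF-Kunneth} by invoking the Künneth theorem for tensor products of C$^*$-algebras (Schochet, building on \cite{brown2016purely}-style references — in practice \cite{Bla86} Chapter 23) and then simplifying the short exact sequence using the fact that $K_0(M_\s)$ is a torsion-free abelian group. First I would recall that $M_\s$ satisfies the UCT and that $K_1(M_\s) = 0$ while $K_0(M_\s)$ is the subgroup of $\Q$ described in \eqref{eqn:K0-UHF}, so in particular it is flat as a $\Z$-module (every subgroup of $\Q$ is torsion-free, hence flat). The Künneth short exact sequence reads
\begin{equation*}
0 \to K_*(A) \otimes K_*(M_\s) \to K_*(A \otimes M_\s) \to \operatorname{Tor}_1^{\Z}(K_*(A), K_*(M_\s)) \to 0,
\end{equation*}
and since $K_*(M_\s)$ is flat the $\operatorname{Tor}$ term vanishes, so the natural map $K_*(A) \otimes K_*(M_\s) \to K_*(A \otimes M_\s)$ is an isomorphism of $\Z/2\Z$-graded groups. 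Reading off degrees $0$ and $1$, and using $K_1(M_\s) = 0$ to kill the cross terms, gives exactly \eqref{eqn:UHF-Kunneth-K0} and its $K_1$ counterpart.

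Next I would address the naturality/explicit-formula claim. The map in the Künneth sequence is the external product in K-theory, which on classes of projections is given precisely by $[p] \otimes [q] \mapsto [p \otimes q]$; this is a standard property of the Kasparov product (or of the external product defined directly on Murray--von Neumann classes), so the final sentence of the proposition is automatic once we know the Künneth map is the one realizing the external product. I would state this with a reference to the construction of the external product rather than rederiving it.

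The only genuine subtlety — and the step I would flag as the main point to get right — is verifying that the Künneth theorem applies here without assuming anything about $A$. Schochet's Künneth theorem requires one of the two tensor factors to satisfy the UCT (equivalently, to be in the bootstrap class $\mathcal{N}$), and $M_\s$, being AF, certainly does; no hypothesis on $A$ is needed (the other factor is arbitrary). So there is no real obstacle, just the bookkeeping of invoking the theorem in its correct generality and noting flatness of $K_0(M_\s)$. Since the paper only needs this special case, the proof is short:

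\begin{proof}
    The UHF algebra $M_\s$ is AF and hence lies in the bootstrap class, so the Künneth theorem for tensor products \cite[Theorem 23.1.3]{Bla86} applies: there is a natural short exact sequence
    \begin{equation*}
        0 \to \bigoplus_{i+j = *} K_i(A) \otimes K_j(M_\s) \to K_*(A \otimes M_\s) \to \bigoplus_{i+j=*+1} \operatorname{Tor}_1^\Z(K_i(A), K_j(M_\s)) \to 0,
    \end{equation*}
    in which the first map is the external Kasparov product. By \eqref{eqn:K0-UHF}, $K_0(M_\s)$ is a subgroup of $\Q$, hence torsion-free, hence flat over $\Z$; and $K_1(M_\s) = 0$. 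Therefore every $\operatorname{Tor}$ term vanishes and the cross terms involving $K_1(M_\s)$ are zero, so the external product induces isomorphisms
    \begin{align*}
        K_0(A \otimes M_\s) &\cong K_0(A) \otimes K_0(M_\s),\\
        K_1(A \otimes M_\s) &\cong K_1(A) \otimes K_0(M_\s).
    \end{align*}
    Finally, the external product sends $[p] \otimes [q]$ to $[p \otimes q]$ for projections $p \in A$ and $q \in M_\s$, which gives the last claim.
\end{proof}
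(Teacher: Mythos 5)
Your proof is correct and takes essentially the same route as the paper: both invoke Schochet's K\"unneth theorem (with $M_\s$ in the bootstrap class), use torsion-freeness of $K_0(M_\s)$ to kill the $\operatorname{Tor}$ term and $K_1(M_\s)=0$ to kill the cross terms. Your additional remark that the K\"unneth map is the external product, which sends $[p]\otimes[q]$ to $[p\otimes q]$, correctly accounts for the ``moreover'' clause that the paper leaves implicit.
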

\begin{proof}
    Since $K_0(M_\s)$ is torsion free, we have $\mathrm{Tor}(\, \cdot \,, K_0(M_\s)) = 0$; see for example  \cite[Corollary 3.50]{Rotman09}.
    Also, $K_1(M_\s) = 0$. 
    The result now follows by the Künneth theorem; see \cite{schochet1982topological}. 
\end{proof}

The other main technical tool is the Kirchberg--Phillips theorem (\cite{Ki95,Ph00}). We recall that a Kirchberg algebra is a simple, separable, nuclear, purely infinite C$^*$-algebra.
The following formulation of the Kirchberg--Phillips theorem can be found in \cite[Theorem 8.4.1]{rordam2002classification}. 
\begin{thm}[Kirchberg--Phillips, unital case]\label{thm:KP}
    Let $A$ and $B$ be unital Kirchberg algebras satisfying the universal coefficient theorem. Let $\alpha_0:K_0(A) \rightarrow K_0(B)$ and $\alpha_1:K_1(A) \rightarrow K_1(B)$ be group isomorphisms such that $\alpha_0([1_A]) = [1_B]$. Then there exists an isomorphism $\alpha:A \rightarrow B$ such that $K_i(\alpha)=\alpha_i$.
\end{thm}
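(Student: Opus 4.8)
The plan is to prove this by the standard abstract classification scheme built on Kirchberg's work: first promote the algebraic K-theory data to a $KK$-equivalence, then realise it geometrically and rigidify by an intertwining argument.

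First I would use the UCT to lift the prescribed K-theory isomorphisms into Kasparov theory. The universal coefficient sequence
\[
0 \to \mathrm{Ext}^1_\Z(K_*(A),K_{*+1}(B)) \to KK_*(A,B) \to \mathrm{Hom}(K_*(A),K_*(B)) \to 0
\]
is natural in both variables, so the pair $(\alpha_0,\alpha_1) \in \mathrm{Hom}(K_*(A),K_*(B))$ admits a lift to a class $\kappa \in KK(A,B)$. Because $\alpha_0,\alpha_1$ are isomorphisms, a diagram chase with the five lemma (Rosenberg--Schochet) shows that $\kappa$ is invertible in $KK$, i.e.\ a $KK$-equivalence, with inverse $\kappa^{-1} \in KK(B,A)$ lifting $(\alpha_0^{-1},\alpha_1^{-1})$; invertibility depends only on the image in $\mathrm{Hom}$, so the non-uniqueness of the lift coming from the $\mathrm{Ext}$ term is harmless. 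Granting the UCT, this step is purely formal.

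Next comes the analytic heart: Kirchberg's existence and uniqueness theorems for Kirchberg algebras. The existence theorem realises $\kappa$ and $\kappa^{-1}$ by \emph{unital} $*$-homomorphisms $\phi:A \to B$ and $\psi:B \to A$ with $KK(\phi)=\kappa$ and $KK(\psi)=\kappa^{-1}$; here the unital normalisation $\alpha_0([1_A])=[1_B]$ is exactly what makes unital lifts available. The uniqueness theorem asserts that two unital $*$-homomorphisms between unital Kirchberg algebras inducing the same $KK$-class are approximately unitarily equivalent. Applying this to the two compositions, and using that the Kasparov product computes composition, so that $KK(\psi\circ\phi)=\kappa\cdot\kappa^{-1}=1_A=KK(\id_A)$ and symmetrically $KK(\phi\circ\psi)=1_B$, I obtain $\psi\circ\phi \approx_{\mathrm u} \id_A$ and $\phi\circ\psi \approx_{\mathrm u} \id_B$. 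These two theorems are the genuinely deep input: their proofs rest on Kirchberg's $\O_\infty$-absorption theorem $A \cong A \otimes \O_\infty$, the $\O_2$-embedding theorem, and delicate asymptotic-morphism approximation arguments, so I would cite them rather than reprove them.

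Finally I would run Elliott's approximate intertwining argument. From $\psi\circ\phi \approx_{\mathrm u} \id_A$ and $\phi\circ\psi \approx_{\mathrm u} \id_B$, the usual back-and-forth unitary perturbations produce a genuine isomorphism $\alpha:A\to B$ that is a limit of unitary conjugates of $\phi$; hence $KK(\alpha)=KK(\phi)=\kappa$, so $K_i(\alpha)=\alpha_i$, and unitality is preserved throughout. The main obstacle is precisely the existence and uniqueness theorems of the previous paragraph: packaging the K-theoretic data into $KK$ and the final intertwining are essentially formal, whereas realising $KK$-classes by actual $*$-homomorphisms and classifying them up to approximate unitary equivalence is where pure infiniteness, nuclearity, and the $\O_\infty$-absorption phenomenon do all the work.
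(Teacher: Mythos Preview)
The paper does not prove this theorem; it merely states it and cites \cite[Theorem 8.4.1]{rordam2002classification} for the proof. Your outline is a reasonable sketch of the standard route to Kirchberg--Phillips (UCT lift to a $KK$-equivalence, Kirchberg's existence and uniqueness theorems, Elliott intertwining), but since the paper treats this as a black-box input there is nothing to compare against --- you have gone well beyond what the paper itself does with this statement.
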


Finally, we record the K-theory of the Cuntz algebras $\O_k$. 
\begin{thm}[\cite{Cu81}]\label{thm:Cuntz}
    Let $2 \leq k < \infty$. Then $K_0(\O_k) \cong \Z_{k-1}$, with $[1_{\O_k}]$ mapping to $1 \in \Z_{k-1}$, and  $K_1(\O_k) = 0$.
\end{thm}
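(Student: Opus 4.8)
The strategy is to exploit the crossed product description recalled in Section~\ref{subsec:notation}, namely $\K \otimes \O_k \cong \K M_{k^\infty} \rtimes_{\Phi_k} \Z$, together with the Pimsner--Voiculescu exact sequence. Since K-theory is invariant under stabilisation, $K_*(\O_k) \cong K_*(\K M_{k^\infty} \rtimes_{\Phi_k} \Z)$, so it suffices to compute the right-hand side.

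First I would assemble the input for the Pimsner--Voiculescu sequence. As $M_{k^\infty}$ is AF we have $K_1(\K M_{k^\infty}) = 0$ and, by \eqref{eqn:K0-UHF}, $K_0(\K M_{k^\infty}) \cong K_0(M_{k^\infty}) = \D$, with the isomorphism implemented by $\Tr_{k^\infty}$. Since $\Phi_k$ scales the trace by $\tfrac1k$, the induced automorphism $(\Phi_k)_*$ on $K_0(\K M_{k^\infty}) = \D$ is multiplication by $\tfrac1k$. Therefore $\id - (\Phi_k)_*$ is multiplication by $\tfrac{k-1}{k}$ on $\D$; this is injective, and as $k$ is a unit in $\D$ its image equals $(k-1)\D$, so its cokernel is $\D/(k-1)\D$. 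Because $\gcd(k,k-1)=1$, the composite $\Z \hookrightarrow \D \twoheadrightarrow \D/(k-1)\D$ is surjective with kernel $(k-1)\Z$, whence $\D/(k-1)\D \cong \Z_{k-1}$.

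Next I would feed this into the Pimsner--Voiculescu six-term sequence for $\K M_{k^\infty} \rtimes_{\Phi_k} \Z$. Using $K_1(\K M_{k^\infty}) = 0$, the sequence collapses to a short exact sequence
\begin{equation*}
0 \longrightarrow \D \xrightarrow{\ \id - (\Phi_k)_*\ } \D \longrightarrow K_0(\O_k) \longrightarrow 0
\end{equation*}
together with the identification $K_1(\O_k) \cong \ker(\id - (\Phi_k)_*) = 0$. Combined with the previous paragraph, this yields $K_0(\O_k) \cong \D/(k-1)\D \cong \Z_{k-1}$ and $K_1(\O_k) = 0$.

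It remains to locate the class of the unit. Under the stabilisation isomorphism $K_0(\O_k) \cong K_0(\K \otimes \O_k)$, the class $[1_{\O_k}]$ corresponds to the class of the unit of the canonical copy of $M_{k^\infty}$ (the core, spanned by the $S_\mu S_\nu^*$ with $|\mu| = |\nu|$), which has trace $1$ and hence represents $1 \in \D$ under \eqref{eqn:K0-UHF}. Since the map $\D \to K_0(\O_k)$ in the short exact sequence above is the quotient $\D \twoheadrightarrow \D/(k-1)\D$, we conclude $[1_{\O_k}] \mapsto 1 \in \Z_{k-1}$. The main obstacle is the bookkeeping: getting the direction of the map induced by a trace-scaling automorphism right (it is multiplication by $\tfrac1k$, not $k$; note the answer is in any case insensitive to whether one uses $\id - (\Phi_k)_*$ or $\id - (\Phi_k)_*^{-1}$), and carefully tracking $[1_{\O_k}]$ through the stabilisation and the Pimsner--Voiculescu maps. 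As a consistency check, the Cuntz relation gives $[1_{\O_k}] = \sum_{i=1}^k [S_iS_i^*] = \sum_{i=1}^k [S_i^*S_i] = k[1_{\O_k}]$ in $K_0(\O_k)$, so $(k-1)[1_{\O_k}] = 0$, in agreement with $[1_{\O_k}]$ generating $\Z_{k-1}$.
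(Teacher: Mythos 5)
Your argument is correct. Note, though, that the paper offers no proof of this statement at all: it is quoted directly from Cuntz \cite{Cu81}, so there is no in-paper argument to compare against. What you have written is the standard modern derivation -- stabilise via Cuntz's isomorphism $\K \otimes \O_k \cong \K M_{k^\infty} \rtimes_{\Phi_k} \Z$ from \cite{cuntz1977simple} and run the Pimsner--Voiculescu sequence -- and it is in fact the one-point-space case of the computation the paper itself performs for $B_{k,\alpha}$ in Proposition \ref{prop:general-K-theory}, with your identification $\Z[k^{-1}]/(k-1)\Z[k^{-1}] \cong \Z_{k-1}$ being exactly Proposition \ref{prop:K0-UHF-quotient} for $\s = k^\infty$, $m = k-1$. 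The numerics are right: $(\Phi_k)_*$ is multiplication by $\tfrac1k$ on $\Z[k^{-1}]$, so $\id - (\Phi_k)_*$ is injective with image $(k-1)\Z[k^{-1}]$, giving $K_1 = 0$ and $K_0 \cong \Z_{k-1}$. The only step you gloss over is the location of the unit: saying that $[1_{\O_k}]$ ``corresponds to the unit of the core'' is not by itself enough, since what you really need is that under Cuntz's stable isomorphism $1_{\O_k}$ goes to a projection $E_1 = e_{11}\otimes 1_{M_{k^\infty}}$ of trace $1$ inside the subalgebra $\K M_{k^\infty}$ of the crossed product (so that its class is hit by $\iota_*$ and maps to $1$ in the quotient); this is exactly the role played by Proposition \ref{prop:Sibbel-Winter}(ii) in the paper's analogous unit-tracking in Theorem \ref{thm:Getting-Ok}, and it does hold for Cuntz's isomorphism, so the gap is one of justification rather than substance. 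For the record, Cuntz's original proof in \cite{Cu81} proceeds via his own exact-sequence techniques (cf.\ also \cite{Cuntz1981ktheoryII}) rather than Pimsner--Voiculescu, but the computation is equivalent.
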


\section{The main construction} \label{section:Construction} 

In this section, we revisit the main construction from \cite{sibbel2024cantor}, which in turn builds on \cite{cuntz1977simple, Cuntz1981ktheoryII}. In \cite{sibbel2024cantor}, the authors work exclusively with $\O_2$. In order to obtain C$^*$-diagonals in $\O_k$ for $2 < k <\infty$, we will need to work with a slightly more general version of the construction using $k$ isometries instead of just two.  

Let $k \in \N$ with $k \ge 2$. Let $S_1,\ldots,S_k$ be the canonical generators of the Cuntz algebra $\O_{k}$. 
Write  $D_{k^\infty}$ for the Cartan subalgebra in the Cuntz algebra $\O_{k}$, generated by elements of the form $S_\mu S_\mu^*$ for $\mu\in \{1,\ldots,k\}^n$ for all $n \in \N$. 

Let $X$ be the Cantor space and let $\alpha$ be a minimal homeomorphism on $X$. By abuse of notation, we also write $\alpha$ for the induced $^*$-automorphism of $C(X)$, given by $\alpha(f) = f \circ \alpha^{-1}$ for all $f \in C(X)$, and the $\Z$-action on $C(X)$ it generates.
Let $v \in C(X) \rtimes_\alpha \Z$ be the unitary implementing the action, i.e.\ $v f v^* = \alpha(f)$ for all $f \in C(X)$.

We now define subalgebras  $D_{k, \alpha} \subseteq B_{k, \alpha} \subseteq\mathcal{O}_k \otimes (C(X) \rtimes_\alpha \Z)$ as follows:
\begin{equation}
\begin{split}\label{eqn:def-B-k-alpha}
D_{k, \alpha} &= D_{k^\infty} \otimes C(X),\\
B_{k, \alpha} &= \textnormal{C}^*(D_{k, \alpha} \cup \{ S_1 \otimes v, \dots, S_k \otimes v\}). 
\end{split}
\end{equation}
The reason for considering this construction is the following result. 
\begin{prp}\label{prop:Diagonal-in-B}
    Let $k \geq 2$ and let $\alpha$ be a minimal homeomorphism of the Cantor space. Then $D_{k, \alpha}$ is a C$^*$-diagonal in $B_{k, \alpha}$.
\end{prp}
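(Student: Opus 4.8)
The plan is to verify conditions (0)--(4) in the definition of a C$^*$-diagonal for the inclusion $D_{k,\alpha} \subseteq B_{k,\alpha}$, but rather than working entirely by hand it is cleanest to produce an explicit \emph{principal} \'etale groupoid model for the pair and then invoke Renault's theorem. The starting point is the well-known groupoid picture of the Cuntz algebra side: the inclusion $D_{k^\infty} \subseteq \O_k$ is modelled by the Cuntz groupoid $\mathcal{G}_k$ on the one-sided shift space $\{1,\dots,k\}^{\N}$, which is topologically principal but not principal (the isotropy lives over the eventually-periodic points). The effect of tensoring the distinguished generators $S_i$ with the single unitary $v$ implementing a \emph{minimal} homeomorphism $\alpha$ of a Cantor set $X$ should be, at the groupoid level, to form a skew product of $\mathcal{G}_k$ by the cocycle $c\colon \mathcal{G}_k \to \Z$ recording the difference of word-lengths (the same cocycle whose associated AF-core is $D_{k^\infty}$), twisted through the $\Z$-action $\alpha$ on $X$. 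Concretely, $B_{k,\alpha}$ should be $C^*_r(H)$ where $H = \mathcal{G}_k \times_{c} X$ has unit space $\{1,\dots,k\}^{\N} \times X$ and a morphism $(g,x)$ from $(s(g), \alpha^{-c(g)}x)$ to $(r(g),x)$, with $C_0(H^{(0)}) = D_{k^\infty}\otimes C(X) = D_{k,\alpha}$.

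The key steps, in order: (i) identify $B_{k,\alpha}$ with $C^*_r(H)$ for the skew-product groupoid $H$ just described -- this is a routine universal-property / generators-and-relations check, matching $S_\mu S_\mu^* \otimes f$ and $S_\mu \otimes v$ with the appropriate compactly supported functions on $H$, and noting $H$ is second countable, \'etale, locally compact Hausdorff because $\mathcal{G}_k$ is and $X$ is Cantor; (ii) show $H$ is \emph{principal}: an element of the isotropy $H^{(x)}_{(x)}$ over a point $\xi = (\omega, x)$ forces $r(g) = s(g) = \omega$ in $\mathcal{G}_k$, so $\omega$ is eventually periodic and $g$ corresponds to a nonzero power of the period shift, whence $c(g) \neq 0$; but then the second coordinate condition reads $\alpha^{-c(g)}x = x$, contradicting minimality (hence freeness) of $\alpha$ since $c(g)\ne 0$. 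Therefore the only isotropy is the units, i.e.\ $H$ is principal; (iii) conclude via Renault's theorem (quoted above) that $C_0(H^{(0)}) = D_{k,\alpha} \subseteq C^*_r(H) = B_{k,\alpha}$ is a C$^*$-diagonal, the "principal" hypothesis being exactly what upgrades Cartan to C$^*$-diagonal.

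The main obstacle is step (i): one must be careful that the subalgebra generated by $D_{k,\alpha}$ and the \emph{twisted} generators $S_i \otimes v$ inside the larger algebra $\O_k \otimes (C(X)\rtimes_\alpha \Z)$ is genuinely all of $C^*_r(H)$ and not something smaller -- in particular that the copy of $C(X)$ and the unitary $v$ are recovered from these generators and $D_{k^\infty}$ (for instance $S_i^*(S_i\otimes v)(S_i\otimes v)^*S_i = 1\otimes vv^* $-type manipulations, and more to the point recovering $1\otimes f$ from $D_{k^\infty}\otimes C(X)$ which is already inside, and recovering $1\otimes v$ from $\sum_i (S_i\otimes v)(S_i\otimes 1)^* $ after also checking $S_i\otimes 1 \in B_{k,\alpha}$, which is the real content). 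An alternative, avoiding the explicit groupoid, is to verify (1)--(4) directly: maximal abelianness of $D_{k,\alpha}$ follows from maximal abelianness of $D_{k^\infty}$ in $\O_k$ together with that of $C(X)$ in $C(X)\rtimes_\alpha\Z$ (using that $\alpha$ is free, so $C(X)$ is maximal abelian there) via a tensor-product argument along the lines of \cite[Lemma 5.1]{barlak2017cartan}; the conditional expectation is the tensor product of the canonical ones; the unique-extension-of-pure-states property (4) is where freeness of $\alpha$ is essential and is most transparently seen groupoid-theoretically, so in practice the skew-product description is the efficient route and the generator-recovery bookkeeping is the price to pay.
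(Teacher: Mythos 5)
Your strategy is genuinely different from the paper's proof of this proposition: the paper simply observes that the direct verification of Kumjian's/Renault's axioms in \cite[Lemma 1]{sibbel2024cantor} goes through verbatim for general $k$, whereas you propose to exhibit a principal \'etale groupoid model for the pair $(B_{k,\alpha},D_{k,\alpha})$ and invoke Renault's correspondence. That route is viable, and in fact it closely parallels Section \ref{sec:groupoid-model} of the paper, where $B_{k,\alpha}$ is realised as the C$^*$-algebra of a topological graph and hence of a Deaconu--Renault groupoid over $X\times\{1,\dots,k\}^\N$ (Proposition \ref{prop:Iso-to-graph-algebra}, Corollary \ref{cor:Groupoid-model-for-B}); your skew product $H$ is isomorphic to that groupoid, and your principality argument (nontrivial isotropy in the Cuntz groupoid forces a nonzero cocycle value, which would produce a periodic point for $\alpha$, contradicting freeness of a minimal homeomorphism of an infinite space) is correct and is essentially the argument used there.

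The gap is your step (i), which is the entire content of the statement and is only asserted. Worse, the concrete mechanism you suggest for the ``generator-recovery bookkeeping'' cannot work: $S_i\otimes 1$ and $1\otimes v$ do \emph{not} lie in $B_{k,\alpha}$. Since $1\otimes C(X)\subseteq D_{k,\alpha}$, having $S_i\otimes 1\in B_{k,\alpha}$ (equivalently $1\otimes v\in B_{k,\alpha}$) would force $B_{k,\alpha}=\O_k\otimes(C(X)\rtimes_\alpha\Z)$, which is false; for instance for $k=2$ the K\"unneth theorem gives $K_0(\O_2\otimes(C(X)\rtimes_\alpha\Z))=0$, while $K_0(B_{2,\alpha})\neq 0$ by the computations in Section \ref{sec:K-theory}. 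The correct implementation runs in the opposite direction: present $C^*_r(H)$ by a universal picture (e.g.\ as the Cuntz--Pimsner algebra of the topological graph with $E^0=X$, $E^1=X\times\{1,\dots,k\}$, as in Proposition \ref{prop:Iso-to-graph-algebra} together with \cite{katsura2009cuntz}), check that $1\otimes f$ and $\sum_i S_i\otimes v\xi_i$ satisfy the defining relations to get a $^*$-homomorphism onto $B_{k,\alpha}$ (surjectivity is then immediate from the generators; no recovery of $v$ is needed or possible), and obtain injectivity from simplicity of $C^*_r(H)$, i.e.\ minimality of the graph/groupoid, or from a gauge-invariant uniqueness theorem. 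Finally, to conclude the proposition you must also track that this isomorphism carries $C(H^{(0)})$ \emph{onto} $D_{k,\alpha}$ (the span of the elements $S_\mu S_\mu^*\otimes f$); this identification of the diagonal is an extra step that is not automatic from the abstract isomorphism and is not carried out in the paper's Section \ref{sec:groupoid-model} either, precisely because the paper establishes the present proposition independently by the direct verification.
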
 
\begin{proof}
    The proof of \cite[Lemma 1]{sibbel2024cantor} also works for general $k$. 
\end{proof}

The following proposition summarises the properties of $B_{k,\alpha}$, established in \cite{sibbel2024cantor} when $k=2$, that extend to the more general setting.
\begin{prp}\label{prop:Sibbel-Winter}
Let $k \geq 2$ and let $\alpha:X \rightarrow X$ be a minimal homeomorphism of the Cantor space. 
Let $\Phi_k:\K M_{k^\infty} \rightarrow \K M_{k^\infty}$ be the trace scaling automorphism. 
Let $E_1 = e_{11} \otimes 1_{M_{k^\infty}} \otimes 1_{C(X)} \in C(X, \K M_{k^\infty})$ be the constant function that evaluates to $e_{11} \otimes 1_{M_{k^\infty}}$ at all points of $X$. 
\begin{enumerate}[(i)]
    \item $B_{k,\alpha}$ is a simple, separable, nuclear, unital C$^*$-algebra; 
    \item $B_{k,\alpha} \cong E_1(C(X, \K M_{k^\infty}) \rtimes_{\Phi_k \otimes \alpha}\Z)E_1$;
    \item $\K \otimes B_{k,\alpha} \cong C(X, \K M_{k^\infty}) \rtimes_{\Phi_k \otimes \alpha}\Z$.
\end{enumerate} 
\end{prp}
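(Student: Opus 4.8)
\textbf{Proof plan for Proposition \ref{prop:Sibbel-Winter}.}

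The plan is to follow the strategy of \cite{sibbel2024cantor} for $k=2$ and check that each step only uses $\sum_{i=1}^k S_iS_i^* = 1$ rather than the specific value $k=2$. The backbone is to identify $B_{k,\alpha}$ with a corner of a crossed product of $C(X,\K M_{k^\infty})$ by the automorphism $\Phi_k \otimes \alpha$, where $\Phi_k$ is the trace scaling automorphism recalled in Section \ref{subsec:notation}. First I would recall Cuntz's observation (\cite[Section 2.1]{cuntz1977simple}) that $\O_k \cong E_1(\K M_{k^\infty} \rtimes_{\Phi_k} \Z)E_1$ via an isomorphism carrying $D_{k^\infty}$ onto $E_1(D_{k^\infty}^{\mathrm{st}} \rtimes \ldots)E_1$, where $D_{k^\infty}^{\mathrm{st}}$ denotes the diagonal in $\K M_{k^\infty}$; concretely, the generator $S_i$ corresponds (up to the cutdown by $E_1$) to a partial isometry in the crossed product built from the canonical matrix units and the implementing unitary. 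Tensoring this picture with $C(X)$ and then cutting down by the unitary $v$ implementing $\alpha$ is what produces the twisted automorphism $\Phi_k \otimes \alpha$ on $C(X,\K M_{k^\infty})$. The key computation is that conjugating $e_{11}\otimes 1 \otimes 1$ by the images of $S_i \otimes v$ reproduces exactly the corner description in (ii), and that the C$^*$-subalgebra these elements generate together with $D_{k,\alpha}$ is all of $E_1(C(X,\K M_{k^\infty}) \rtimes_{\Phi_k\otimes\alpha}\Z)E_1$; this is the content of \cite[Lemma 2]{sibbel2024cantor} (or its proof) with $2$ replaced by $k$.

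For part (iii), I would note that $E_1$ is a full projection in $C(X,\K M_{k^\infty}) \rtimes_{\Phi_k\otimes\alpha}\Z$: indeed $E_1$ is already full in $C(X,\K M_{k^\infty})$ since $e_{11}\otimes 1$ generates a full hereditary subalgebra of $\K M_{k^\infty}$, and fullness is inherited by the crossed product. By Brown's stabilisation theorem (\cite[Theorem 2.8]{Bla86} or the cited version in \cite{sibbel2024cantor}), a full corner of a stable C$^*$-algebra, stabilised, recovers the algebra; since $C(X,\K M_{k^\infty}) \rtimes_{\Phi_k\otimes\alpha}\Z$ is stable (because $C(X,\K M_{k^\infty})$ is stable and stability passes to crossed products by \cite{hjelmborgstability}), we get $\K \otimes B_{k,\alpha} \cong \K \otimes E_1(\cdots)E_1 \cong C(X,\K M_{k^\infty}) \rtimes_{\Phi_k\otimes\alpha}\Z$, which is (iii).

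For part (i), separability is clear, and unitality is clear since $1_{\O_k}\otimes 1_{C(X)\rtimes\Z} = 1_{\O_k}\otimes 1 \in B_{k,\alpha}$ (as $\sum_i (S_i\otimes v)(S_i\otimes v)^* = \sum_i S_iS_i^* \otimes 1 = 1$). Nuclearity follows from (iii): $C(X,\K M_{k^\infty})$ is nuclear and $\Z$ is amenable, so the crossed product is nuclear, hence so is its full corner $B_{k,\alpha}$. For simplicity I would argue via (iii) as well: by the standard Elliott--Kishimoto-type criterion, the crossed product of a simple C$^*$-algebra by $\Z$ is simple provided the automorphism has no nontrivial invariant ideals and is suitably outer; here $C(X,\K M_{k^\infty})$ is not simple, so instead I would argue that $\Phi_k\otimes\alpha$ acts minimally on the (compact, after unitisation) spectrum-type data and the only invariant ideals are $0$ and everything --- concretely, an ideal of $C(X,\K M_{k^\infty})$ has the form $C(U,\K M_{k^\infty})$ for $U\subseteq X$ open, and $\alpha$-minimality forces $U\in\{\emptyset,X\}$ --- combined with outerness of the powers of $\Phi_k$ (which scale the trace, hence are not inner). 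This yields simplicity of the crossed product and therefore of $B_{k,\alpha}$. The main obstacle is bookkeeping: making the identification in (ii) precise requires carefully tracking how the tensor-factor relabelling $\psi_k$ in \eqref{eqn:defPhi} interacts with the Cuntz relations and the cutdown by $E_1$, exactly as in \cite{sibbel2024cantor}; but since none of these steps is sensitive to the value of $k\ge 2$, the proof genuinely reduces to citing the $k$-agnostic parts of \cite{sibbel2024cantor}.
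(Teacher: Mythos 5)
Your proposal is correct, and for parts (i) and (ii) it is essentially the paper's argument: the identification $B_{k,\alpha} \cong E_1(C(X,\K M_{k^\infty}) \rtimes_{\Phi_k \otimes \alpha}\Z)E_1$ is obtained, exactly as here, by observing that the proof of \cite[Lemma 2]{sibbel2024cantor} only uses $\sum_{i=1}^k S_iS_i^*=1$ and so works verbatim for all $k\ge 2$. Where you diverge is part (iii) and the packaging of (i). For (iii), the paper does not pass through fullness of $E_1$, Brown's stable isomorphism theorem, and a separate stability result for the crossed product; instead it applies the Hjelmborg--R{\o}rdam criterion directly: the projections $E_i = e_{ii}\otimes 1_{M_{k^\infty}}\otimes 1_{C(X)}$ are mutually orthogonal, mutually equivalent, and sum strictly to $1_{M(A)}$, so \cite[Theorem 2.1, proof of (e)$\Rightarrow$(a)]{hjelmborg1998stability} yields $A \cong \K\otimes E_1AE_1$ in one step, giving stability and the corner identification simultaneously. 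Your route is also valid, but note that ``stability passes to crossed products'' is not a formal triviality (stability fails to pass to extensions, for instance); for $\Z$-actions on $\sigma$-unital stable algebras it is true, e.g.\ because the isometries in $M(C(X,\K M_{k^\infty}))$ witnessing stability still have range projections summing strictly to $1$ in $M(A)$ --- which is in effect the same criterion the paper invokes, so the paper's choice of argument is the more economical one. For (i), the paper again cites the $k$-agnostic proof of \cite[Lemma 2]{sibbel2024cantor} for nuclearity and simplicity, whereas you rederive them from the crossed-product picture; that works, but your simplicity sketch should be anchored on the fact that $\alpha$ acts \emph{freely} on $\Prim(C(X,\K M_{k^\infty}))\cong X$ (minimality of $\alpha$ on an infinite space rules out periodic points) together with the standard topological-freeness/proper-outerness criterion for $\Z$-crossed products --- the remark that powers of $\Phi_k$ ``scale the trace, hence are not inner'' is not by itself the right hypothesis, since non-innerness is weaker than the proper outerness those criteria require, and in any case the needed freeness already comes from $\alpha$ alone.
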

\begin{proof}
    (i) Separability and unitality are clear. Nuclearity and simplicity were proven in \cite[Lemma 2]{sibbel2024cantor} in the case $k=2$ and the same proof works for all $k \geq 2$. 
    
    (ii) This was proven in \cite[Lemma 2]{sibbel2024cantor} in the case $k=2$. The same proof works for all $k \geq 2$.
    
    (iii) Let $A = C(X, \K M_{k^\infty}) \rtimes_{\Phi_k \otimes \alpha}\Z$. Let $E_i = e_{ii} \otimes 1_{M_{k^\infty}} \otimes 1_{C(X)} \in A$ for $i \in \N$. These projections are mutually orthogonal, mutually equivalent, and sum to the identity of $M(A)$ in the strict topology. By \cite[Theorem 2.1, Proof of (e) $\Rightarrow$ (a)]{hjelmborg1998stability}, $A \cong \K \otimes E_1AE_1$. The result now follows by (ii).
\end{proof}

In the proof of Theorem \ref{thm:Diagonal-in-Ok}, we intend to use the Kirchberg--Phillips classification theorem. We will therefore need to show that $B_{k,\alpha}$ is purely infinite and satisfies the universal coefficient theorem. The latter is an easy corollary of Proposition \ref{prop:Sibbel-Winter} and standard results in the literature.

\begin{prp}\label{prop:B-is-UCT}
Let $k \geq 2$ and let $\alpha:X \rightarrow X$ be a minimal homeomorphism of the Cantor space. Then $B_{k,\alpha}$ is in the bootstrap class and so satisfies the universal coefficient theorem.
\end{prp}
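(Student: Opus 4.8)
The plan is to use the characterisation of the bootstrap (UCT) class as the smallest class of separable C$^*$-algebras containing the separable Type I algebras and closed under countable inductive limits, extensions, and (stable) Morita equivalence, together with the well-known fact that this class is also closed under crossed products by $\Z$. Crucially, by Proposition~\ref{prop:Sibbel-Winter}(iii), $\K \otimes B_{k,\alpha} \cong C(X, \K M_{k^\infty}) \rtimes_{\Phi_k \otimes \alpha} \Z$, so it suffices to show that $C(X, \K M_{k^\infty})$ lies in the bootstrap class: the bootstrap class is closed under crossed products by $\Z$ (this follows from the Pimsner--Voiculescu sequence, or can be cited from Blackadar's book), and it is closed under stable isomorphism, so $B_{k,\alpha}$ itself is then in the bootstrap class. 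Satisfaction of the UCT for bootstrap-class algebras is then immediate from Rosenberg--Schochet.

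First I would note that $C(X, \K M_{k^\infty}) \cong C(X) \otimes M_{k^\infty} \otimes \K$. The algebra $C(X)$ is commutative, hence Type I, hence in the bootstrap class; $M_{k^\infty}$ is an inductive limit of the matrix algebras $M_{k^n}$, which are finite-dimensional and thus in the bootstrap class, and the bootstrap class is closed under countable inductive limits, so $M_{k^\infty}$ is in the bootstrap class; and tensoring with $\K$ preserves the bootstrap class (stability). A cleaner route is to invoke Proposition~\ref{prop:K-theory-functions-into-UHF}, which already records that $C(X, \K M_{k^\infty})$ is an AF algebra (since $X$ is zero-dimensional), and AF algebras are inductive limits of finite-dimensional algebras, hence in the bootstrap class. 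Either way, $C(X, \K M_{k^\infty})$ is in the bootstrap class.

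Next I would apply closure of the bootstrap class under crossed products by $\Z$ to conclude that $C(X, \K M_{k^\infty}) \rtimes_{\Phi_k \otimes \alpha} \Z$ is in the bootstrap class, and then use Proposition~\ref{prop:Sibbel-Winter}(iii) and closure under stable isomorphism to deduce the same for $B_{k,\alpha}$. Finally, the UCT holds for every algebra in the bootstrap class by the Rosenberg--Schochet universal coefficient theorem, completing the proof.

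I do not expect any serious obstacle here; the statement is essentially a bookkeeping exercise assembling standard permanence properties of the bootstrap class. The only point requiring a little care is making sure the permanence property actually invoked for crossed products — that $A \rtimes \Z$ is in the bootstrap class whenever $A$ is — is cited correctly (this is classical, following from the Pimsner--Voiculescu exact sequence, and is recorded e.g.\ in \cite{Bla86}); beyond that, everything is immediate.
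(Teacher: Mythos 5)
Your proposal is correct and follows essentially the same route as the paper: note that $C(X,\K M_{k^\infty})$ is AF (hence in the bootstrap class), then use closure of the bootstrap class under crossed products by $\Z$ and stable isomorphism together with Proposition~\ref{prop:Sibbel-Winter}(iii). The extra details you supply (building $C(X)\otimes M_{k^\infty}\otimes\K$ from Type I algebras and inductive limits, and citing Rosenberg--Schochet for the UCT) are fine but not needed beyond the paper's citation of \cite[Section 22.3.5]{Bla86}.
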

\begin{proof}
    The bootstrap class contains all AF algebras and is closed under stable isomorphism and crossed product by $\Z$; see for example \cite[Section 22.3.5]{Bla86}. The result now follows from Proposition \ref{prop:Sibbel-Winter}(iii).  
\end{proof}

It remains to show that $B_{k,\alpha}$ is purely infinite. For this, we will need a couple of preparatory lemmas. The first lemma is an application of a general result of R{\o}rdam and Sierakowski on pure infiniteness of crossed products (\cite[Theorem~3.3]{rordam2012purely}) to $\Z$-crossed products of stable AF algebras. 

\begin{lem}\label{lem:Rordam-Sierakowski}
Let $A$ be a stable AF algebra and $\beta:\Z \curvearrowright A$ be such that the induced action on $\Prim(A)$ is free.
Suppose $q \precsim p$ in $A \rtimes_\beta \Z$ for all projections $p,q\in A$ with $p \neq 0$. Then $A \rtimes_\beta \Z$ is purely infinite.
\end{lem}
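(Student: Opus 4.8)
The goal is to verify the hypotheses of \cite[Theorem~3.3]{rordam2012purely}, which characterises pure infiniteness of a crossed product $A \rtimes_\beta \Z$ in terms of a suitable dynamical comparison/paradoxicality condition. The plan is to first observe that $A \rtimes_\beta \Z$ is separable (as $A$ is AF, hence separable) and that it has no nonzero quotients that are of type I or, more importantly, admit a tracial state: indeed, any $\beta$-invariant closed two-sided ideal of $A$ corresponds to a $\Z$-invariant open subset of $\Prim(A)$, and since the induced action on $\Prim(A)$ is free, every nonzero quotient of $A \rtimes_\beta \Z$ is itself a crossed product of a nonzero AF algebra by a free $\Z$-action. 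So it suffices to show each such crossed product has the comparison property needed for \cite[Theorem~3.3]{rordam2012purely} to conclude pure infiniteness; and by the hypothesis that $q \precsim p$ in $A \rtimes_\beta \Z$ for all projections $p,q \in A$ with $p \neq 0$, this comparison holds for projections.

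The key steps, in order, are: \emph{(1)} Reduce the general positive-element comparison required by \cite{rordam2012purely} to comparison of projections, using that $A$ is AF and $A \rtimes_\beta \Z$ has real rank zero. Since $A$ is AF, every hereditary subalgebra of $A \rtimes_\beta \Z$ has an approximate unit of projections coming from $A$ up to the standard approximation arguments, so any positive element of $A \rtimes_\beta \Z$ is Cuntz equivalent to a projection; combined with the density of $A$ and a standard $\varepsilon/3$-type argument, the hypothesis on projections in $A$ upgrades to: for every nonzero positive $a \in A \rtimes_\beta \Z$ and every positive $b$ in the ideal generated by $a$, one has $b \precsim a$. \emph{(2)} Deduce that $A \rtimes_\beta \Z$ has the ``every nonzero hereditary subalgebra contains an infinite projection'' property, or more directly that it is traceless and has the relevant comparison, which by \cite[Theorem~3.3]{rordam2012purely} (or the equivalent formulation via properly infinite projections) gives that $A \rtimes_\beta \Z$ is purely infinite. \emph{(3)} Handle quotients: because the condition ``$A$ AF, $\beta$ inducing a free action on $\Prim(A)$, and comparison of projections'' passes to $\Z$-invariant quotients (a quotient of an AF algebra is AF, a restriction of a free action to an invariant subspace is free, and subequivalence is inherited by quotients), the hypotheses of \cite[Theorem~3.3]{rordam2012purely} — which typically demands the paradoxicality/comparison condition on the algebra and all its quotients — are met.

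The main obstacle I expect is step \emph{(1)}: carefully passing from the comparison hypothesis, which is only assumed for projections coming from the \emph{subalgebra} $A$, to the comparison of arbitrary positive elements (or at least arbitrary projections) in the crossed product, as needed to invoke \cite{rordam2012purely}. This requires knowing that $A \rtimes_\beta \Z$ has real rank zero — which follows because $A$ is AF with a free $\Z$-action, so $A \rtimes_\beta \Z$ is an AF algebra crossed by $\Z$ and real rank zero is inherited in this setting (e.g.\ via the fact that such crossed products have the ideal property and stable rank considerations, or more simply because every projection in $A \rtimes_\beta \Z$ is equivalent to one in $A$ after cutting down, by an averaging argument over the dense $^*$-subalgebra of finitely supported functions) — and that projections in $A \rtimes_\beta \Z$ can be moved into $A$. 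Once this structural fact is in hand, the comparison hypothesis on $A$ propagates and the theorem of R{\o}rdam--Sierakowski applies verbatim; the remaining verifications (separability, freeness passing to quotients, AF-ness of quotients) are routine.
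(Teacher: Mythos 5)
You have identified the right engine, \cite[Theorem 3.3]{rordam2012purely}, and your verification of its standing hypotheses (separability and the ideal property of the AF algebra $A$, exactness of the $\Z$-action by amenability, and essential freeness of the induced action on $\widehat{A}$ coming from freeness on $\Prim(A)$) matches the paper. The gap is in your step (1). Note first that the comparison condition the theorem asks for concerns positive elements \emph{of $A$}, not of the crossed product: it suffices to show that every nonzero $a \in A_+$ is properly infinite in $A \rtimes_\beta \Z$, i.e.\ $a \oplus a \precsim a$ there. More seriously, the route you propose for upgrading the projection hypothesis rests on three claims that are unjustified or false: that $A \rtimes_\beta \Z$ has real rank zero, that every positive element of it is Cuntz equivalent to a projection, and that projections of the crossed product can be moved into $A$ by an averaging argument. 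Real rank zero of the crossed product is not a known consequence of ``$A$ AF and the action free on $\Prim(A)$''; in the intended application it follows only a posteriori from pure infiniteness and simplicity, so assuming it is circular. Even granting real rank zero, a positive element need not be Cuntz equivalent to a projection (in a UHF algebra, which has real rank zero, any positive element whose spectrum accumulates at $0$ has non-compact Cuntz class, hence is not equivalent to any projection). And there is no averaging argument producing, from a projection in a $\Z$-crossed product, a Murray--von Neumann equivalent projection in $A$; the $K$-theoretic surjectivity one gets from Pimsner--Voiculescu when $K_1(A)=0$ does not yield this without cancellation, which is unavailable at this stage.

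The repair is to work entirely inside the stable AF algebra $A$, which is what the paper does: since $A$ is separable with real rank zero, $\Cu(A)$ is algebraic \cite{CEI08}, and since $A$ is stable with stable rank one, every compact element of $\Cu(A)$ is the class of a projection; hence for $a \in A_+$ and $\epsilon>0$ there is a projection $p \in A$ with $(a-\epsilon)_+ \precsim p \precsim a$ in $A$ \cite{GardellaPerera}. Stability of $A$ gives a projection $q \in A$ Cuntz equivalent to $p \oplus p$, the hypothesis gives $q \precsim p$ in $A \rtimes_\beta \Z$, so $(a-\epsilon)_+ \oplus (a-\epsilon)_+ \precsim a$ there, and letting $\epsilon \to 0$ yields $a \oplus a \precsim a$ in $A \rtimes_\beta \Z$ for all $a \in A_+$. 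With the standing hypotheses you already checked, \cite[Theorem 3.3]{rordam2012purely} applies directly; in particular neither real rank zero of the crossed product nor your step (3) about quotients is needed.
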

\begin{proof}
    We check that conditions of \cite[Theorem 3.3]{rordam2012purely} are satisfied. Firstly, 
    since $A$ is AF, it is separable and has the ideal property. Secondly, as the group $\Z$ is discrete and amenable, the action is exact and the full and reduced crossed products coincide. Finally, the canonical surjection $\widehat{A} \twoheadrightarrow \mathrm{Prim}(A)$, which maps an irreducible representation to its kernel, intertwines the $\Z$-actions on $\widehat{A}$ and $\mathrm{Prim}(A)$ induced by $\beta$. 
    Since the $\Z$-action on $\mathrm{Prim}(A)$ induced by $\beta$ is assumed to be free, it follows that the $\Z$-action on $\widehat{A}$ induced by $\beta$ is also free.
    Therefore, the induced $\Z$-action on $\widehat{A}$  is essentially free as this is a weaker condition; see \cite[Definition~4.8]{renault1991ideal}. 

    Since $A$ is AF, it is separable and has real rank zero. 
    Therefore, the Cuntz semigroup $\Cu(A)$ is algebraic by \cite[Corollary 5]{CEI08}, i.e.\ every element is the supremum of a sequence of compact elements. 
    Moreover, as $A$ is stable and has stable rank one, every compact element of $\Cu(A)$ is represented by a projection in $A$; see for example \cite[Proposition 4.14]{GardellaPerera}. 
    Let $a \in A_+$ and $\epsilon > 0$. Using \cite[Proposition 3.11]{GardellaPerera}, there exists a projection $p \in A$ such that $(a-\epsilon)_+ \precsim p \precsim a$ in $A$.
    Since $A$ is stable, there is a projection $q \in A$ that is Cuntz equivalent to $p \oplus p$.
    Hence, by hypothesis, we have $p \oplus p \precsim p$ in $A \rtimes_\beta \Z$.\footnote{The case $p = 0$ is trivial.}
    It follows that $(a-\epsilon)_+ \oplus (a-\epsilon)_+ \precsim a$ in $A \rtimes_\beta \Z$. As this holds for all $\epsilon >0$, it follows that $a \oplus a \precsim a$ in $A \rtimes_\beta \Z$.
    Therefore, by \cite[Theorem 3.3]{rordam2012purely}, $A \rtimes_\beta \Z$ is purely infinite.
\end{proof}

We now prove that the main condition of Lemma \ref{lem:Rordam-Sierakowski} is satisfied by the crossed product of Proposition \ref{prop:Sibbel-Winter}(iii). 
\begin{lem}\label{lem:purely-infinite}
    Let $k \geq 2$. Let $\alpha:X \rightarrow X$ be a minimal homeomorphism on the Cantor space.
    Let $\Phi_k$ be the trace scaling automorphism on the stabilised UHF algebra $\K M_{k^\infty}$.
    Let $A = C(X, \K M_{k^\infty})$.
    Let $p,q \in A_+$ be projections with $p \neq 0$.
    Then $q \precsim p$ in $A \rtimes_{\Phi_k \otimes \alpha} \Z$. 
\end{lem}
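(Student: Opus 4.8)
The plan is to combine two mechanisms available in $A \rtimes_{\Phi_k \otimes \alpha} \Z$: the trace scaling automorphism $\Phi_k$ shrinks a projection of the form $R \otimes \mathbbm{1}_X$ without changing its (full) support, since for the implementing unitary $u$ one has $u^m(R \otimes \mathbbm{1}_X)u^{-m} = \Phi_k^m(R) \otimes \mathbbm{1}_X$, whose pointwise trace is $k^{-m}\Tr_{k^\infty}(R)$; and minimality of $\alpha$ moves any clopen piece of $X$ inside a prescribed nonempty clopen set by applying a power of the $\Z$-action. After suitable reductions, the proof amounts to fitting a heavily shrunk and translated copy of $q$ inside a fixed small sub-projection of $p$.

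First I would reduce to the case where $p$ and $q$ are locally constant projection-valued functions, which is possible after replacing them by Murray--von Neumann equivalent projections (as in the proof of Proposition~\ref{prop:K-theory-functions-into-UHF}). Then $\Tr_{k^\infty}(q(\cdot))$ takes finitely many values, hence is bounded by some $M \in K_0(M_{k^\infty})_+$; choosing a projection $Q \in \K M_{k^\infty}$ with $\Tr_{k^\infty}(Q) = M$, a pointwise trace comparison (Proposition~\ref{prop:K-theory-functions-into-UHF}) gives $q \precsim Q \otimes \mathbbm{1}_X$ in $A$. On the other side, since $p \neq 0$ there is a point $x_0$ and a nonempty clopen set $U \ni x_0$ on which $p$ is constantly equal to the nonzero projection $P_0 := p(x_0)$; then $P_0 \otimes \mathbbm{1}_U$ is a sub-projection of $p$, so $P_0 \otimes \mathbbm{1}_U \precsim p$ in $A$. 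Writing $\delta := \Tr_{k^\infty}(P_0) > 0$, it therefore suffices to prove $Q \otimes \mathbbm{1}_X \precsim P_0 \otimes \mathbbm{1}_U$ in $A \rtimes_{\Phi_k \otimes \alpha} \Z$.

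For this, use minimality of $\alpha$ and compactness of $X$ to cover $X$ by finitely many translates $\alpha^n(U)$ with $n \in \{-L, \dots, L\}$, and refine this cover to a partition of $X$ into clopen sets $V_1, \dots, V_N$ with $\alpha^{-n_j}(V_j) \subseteq U$ for appropriate $n_j \in \{-L,\dots,L\}$. Fix an integer $m \geq 0$, to be chosen later. Conjugating by $u^m$ and using $\alpha^m(X) = X$ gives $Q \otimes \mathbbm{1}_X \sim \Phi_k^m(Q) \otimes \mathbbm{1}_X = \sum_{j=1}^N \Phi_k^m(Q) \otimes \mathbbm{1}_{V_j} \sim \bigoplus_{j=1}^N \Phi_k^m(Q) \otimes \mathbbm{1}_{V_j}$ (a block-diagonal projection in $M_N(A \rtimes_{\Phi_k \otimes \alpha} \Z)$, using that the $V_j$ partition $X$). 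Conjugating the $j$-th summand by $u^{-n_j}$ yields a Murray--von Neumann equivalence onto $r_j := \Phi_k^{m - n_j}(Q) \otimes \mathbbm{1}_{\alpha^{-n_j}(V_j)}$, a projection supported on $\alpha^{-n_j}(V_j) \subseteq U$ with pointwise trace $k^{n_j - m}M \leq k^{L - m}M$ there. Since the $V_j$ are pairwise disjoint, at any point of $U$ at most one summand for each distinct value of $n_j$ is nonzero, so the block-diagonal projection $\bigoplus_{j=1}^N r_j \in M_N(A)$ has pointwise trace at most $(2L+1)k^{L-m}M$ on $U$, and $0$ off $U$. Choosing $m$ large enough that $(2L+1)k^{L-m}M \leq \delta$, a pointwise trace comparison (Proposition~\ref{prop:K-theory-functions-into-UHF}), together with the stability of $A$ to pass between $M_N(A)$ and $A$, gives $\bigoplus_{j=1}^N r_j \precsim P_0 \otimes \mathbbm{1}_U$ in $A$. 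Chaining these relations in $A \rtimes_{\Phi_k \otimes \alpha} \Z$ yields $q \precsim Q \otimes \mathbbm{1}_X \sim \bigoplus_{j=1}^N r_j \precsim P_0 \otimes \mathbbm{1}_U \precsim p$, as required.

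The main obstacle I expect is the bookkeeping in the last step: translating the disjoint clopen pieces $V_j$ into the single clopen set $U$ produces overlapping supports, which forces one to work with a block-diagonal projection over $M_N(A)$ and to control the overlap multiplicity, bounded here by $2L+1$; it is the freedom to first apply $\Phi_k^m$ for arbitrarily large $m$ that makes the resulting total trace small enough to be absorbed by the fixed projection $P_0 \otimes \mathbbm{1}_U$. The remaining ingredients---that Murray--von Neumann subequivalence of locally constant projections in $C(X, \K M_{k^\infty})$ is detected pointwise by $\Tr_{k^\infty}$, and the harmless passage between $A$ and its matrix amplifications via stability---are routine consequences of Proposition~\ref{prop:K-theory-functions-into-UHF}.
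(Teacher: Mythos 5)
Your argument is correct, and it rests on the same three ingredients as the paper's proof --- minimality of $\alpha$ giving a finite family of translates relating a fixed clopen set to all of $X$, the trace-scaling of $\Phi_k$ implemented by the crossed-product unitary to shrink traces at will, and the pointwise trace criterion of Proposition \ref{prop:K-theory-functions-into-UHF} for comparison in $C(X,\K M_{k^\infty})$ --- but you run the comparison in the opposite direction and handle the overlap problem by a different device. The paper starts from a clopen $Y$ with $\Tr_{k^\infty}(p(x))>\epsilon$ on $Y$, pushes small subprojections $r_0\otimes\mathcal{X}_Y\precsim p$ outwards along the translates $\alpha^i(Y)$, $1\le i\le N$, and absorbs the overlaps of this covering by passing to the Cuntz semigroup and using subadditivity (\cite[Lemma 3.2]{GardellaPerera}) applied to the positive, non-projection element $r_1\otimes\sum_{i=1}^N\mathcal{X}_{\alpha^i(Y)}$, which dominates $r_1\otimes 1_{C(X)}$; then $q$ is fitted under $\Phi_k^{-m}(r_1)\otimes 1_{C(X)}\sim r_1\otimes 1_{C(X)}$. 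You instead dominate $q$ by $Q\otimes 1_{C(X)}$, partition $X$ into disjoint clopen sets $V_j$ that translate into the fixed set $U$ on which $p$ is a constant nonzero projection, and pack the translated, $\Phi_k^m$-shrunk pieces into $P_0\otimes\mathcal{X}_U$, controlling the overlaps created after translation by a block-diagonal projection in $M_N(A)$ together with the multiplicity bound $2L+1$. Your route stays entirely within Murray--von Neumann comparison of projections (no Cuntz comparison of non-projection positive elements and no subadditivity lemma), at the cost of the matrix amplification and the preliminary reduction to locally constant $p,q$; the paper avoids matrices but passes through the Cuntz semigroup. The steps you leave implicit --- that Proposition \ref{prop:K-theory-functions-into-UHF} applies equally to $M_N(A)\cong C(X, M_N(\K M_{k^\infty}))$, and that subequivalence between corner projections in $M_N(A\rtimes_{\Phi_k\otimes\alpha}\Z)$ descends to $A\rtimes_{\Phi_k\otimes\alpha}\Z$ --- are indeed routine, so the proof is complete.
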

\begin{proof}
   Since $p \neq 0$, there is a clopen set $Y \subseteq X$ such that $\Tr_{k^\infty}(p(x)) > 0$ for all $x \in Y$. Since $\bigcup_{i=1}^\infty \alpha^i(Y)$ is an open, non-empty, $\alpha$-invariant subspace, it must be all of $X$ by minimality. By compactness, $X = \bigcup_{i=1}^N \alpha^{i}(Y)$ for some $N \in \N$.  
    
    Since $Y$ is compact, there is $\epsilon > 0$ such that $\Tr_{k^\infty}(p(x)) > \epsilon$ for all $x \in Y$.  Let  $r_0 \in \K M_{k^\infty}$ be a projection with $0 < \Tr_{k^\infty}(r_0) < \tfrac{1}{N}\epsilon$. Then, by Proposition~\ref{prop:K-theory-functions-into-UHF}, we have $N[r_0 \otimes \mathcal{X}_Y]  \leq [p]$ in the Murray--von Neumann semigroup of $A$ and the Cuntz semigroup.

    Every projection $e \in A$ is unitarily equivalent to $(\Phi_k \otimes \alpha)(e)$ in the multiplier algebra of $A \rtimes_{\Phi_k \otimes \alpha} \Z$. So $[(\Phi_k \otimes \alpha)(e)] = [e]$ in $\Cu(A \rtimes_{\Phi_k \otimes \alpha} \Z)$. Since we have $(\Phi_k \otimes \alpha)(r_0 \otimes \mathcal{X}_Y) = \Phi_k(r_0) \otimes \mathcal{X}_{\alpha(Y)}$, it follows that
    \begin{equation}
        [p] \geq N[r_0 \otimes \mathcal{X}_Y] = \sum_{i=1}^N[\Phi_k^i(r_0) \otimes \mathcal{X}_{\alpha^i(Y)}]
    \end{equation}
    in $\Cu(A \rtimes_{\Phi_k \otimes \alpha} \Z)$.
    Let $r_1 \in \K M_{k^\infty}$ be a projection with $0 < \Tr_{k^\infty}(r_1) < \tfrac{1}{k^N}\Tr_{k^\infty}(r_0)$. 
    Then, by Proposition~\ref{prop:K-theory-functions-into-UHF}, we have $[r_1 \otimes \mathcal{X}_{\alpha^i(Y)}] \leq [\Phi_k^i(r_0) \otimes \mathcal{X}_{\alpha^i(Y)}]$ for $1 \leq i \leq N$. Hence, we have
    \begin{equation}
        [p] \geq \sum_{i=1}^N[r_1 \otimes \mathcal{X}_{\alpha^i(Y)}] \geq \left[r_1 \otimes \sum_{i=1}^N\mathcal{X}_{\alpha^i(Y)}\right] \geq [r_1 \otimes 1_{C(X)}]
    \end{equation}
    in $\Cu(A \rtimes_{\Phi_k \otimes \alpha} \Z)$, where the second inequality uses \cite[Lemma 3.2]{GardellaPerera}.
    
    Choose $m \in \N$ sufficiently large so that  $k^m\Tr_{k^\infty}(r_1) > \Tr_{k^\infty}(q(x))$ for all $x \in X$. Then, by Proposition~\ref{prop:K-theory-functions-into-UHF}, 
    $[q] \leq [\Phi_k ^{-m}(r_1) \otimes 1_{C(X)}]$ in $\Cu(A \rtimes_{\Phi_k \otimes \alpha} \Z)$. Therefore, we have
     \begin{equation}
        [r_1 \otimes 1_{C(X)}] = [(\Phi_k \otimes \alpha)^{-m}(r_1 \otimes 1_{C(X)})] = [\Phi_k ^{-m}(r_1) \otimes 1_{C(X)}] \geq [q].
    \end{equation}   
   Hence, $[q] \leq [p]$ in $\Cu(A \rtimes_{\Phi_k \otimes \alpha} \Z)$.
\end{proof}

We now use the previous two lemmas to deduce that $B_{k,\alpha}$ is purely infinite. 
\begin{prp}\label{prop:B-purely-infinite}
    Let $k \geq 2$ and $\alpha:X \rightarrow X$ be a minimal homeomorphism of the Cantor space.
    Then $B_{k, \alpha}$ is purely infinite.
\end{prp}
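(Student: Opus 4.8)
The plan is to combine the two preceding lemmas via the structural identification in Proposition~\ref{prop:Sibbel-Winter}. Set $A = C(X, \K M_{k^\infty})$ and consider the crossed product $A \rtimes_{\Phi_k \otimes \alpha} \Z$. By Proposition~\ref{prop:Sibbel-Winter}(iii), this crossed product is stably isomorphic to $B_{k,\alpha}$, so it suffices to prove that $A \rtimes_{\Phi_k \otimes \alpha} \Z$ is purely infinite, since pure infiniteness (in the sense of Kirchberg--R\o rdam) passes to and from stabilisations and hereditary subalgebras.

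First I would verify the hypotheses of Lemma~\ref{lem:Rordam-Sierakowski}. The algebra $A = C(X) \otimes M_{k^\infty} \otimes \K$ is a stable AF algebra by Proposition~\ref{prop:K-theory-functions-into-UHF} (or directly, since $X$ is zero-dimensional). For the freeness of the induced action on $\Prim(A)$: since $M_{k^\infty} \otimes \K$ is simple, $\Prim(A) \cong X$ with the $\Z$-action induced by $\alpha$, and this action is free because $\alpha$ is a minimal homeomorphism of an infinite compact space, hence has no periodic points. Finally, the key subequivalence condition — that $q \precsim p$ in $A \rtimes_{\Phi_k \otimes \alpha} \Z$ for all projections $p, q \in A$ with $p \neq 0$ — is exactly the content of Lemma~\ref{lem:purely-infinite}.

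Having checked these, Lemma~\ref{lem:Rordam-Sierakowski} yields that $A \rtimes_{\Phi_k \otimes \alpha} \Z$ is purely infinite. Since $\K \otimes B_{k,\alpha} \cong A \rtimes_{\Phi_k \otimes \alpha} \Z$ by Proposition~\ref{prop:Sibbel-Winter}(iii), and $B_{k,\alpha}$ is a full hereditary (indeed unital corner) subalgebra of its stabilisation, pure infiniteness descends to $B_{k,\alpha}$. Combined with Proposition~\ref{prop:Sibbel-Winter}(i), which gives that $B_{k,\alpha}$ is simple, separable, nuclear and unital, we conclude that $B_{k,\alpha}$ is a unital Kirchberg algebra; in particular it is purely infinite, as claimed.

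I do not anticipate a genuine obstacle here: all the substantive work has been front-loaded into Lemmas~\ref{lem:Rordam-Sierakowski} and~\ref{lem:purely-infinite}. The only point requiring a word of care is the freeness of the $\Z$-action on $\Prim(A)$ — one should note explicitly that a minimal homeomorphism of the Cantor space is aperiodic, so that the identification $\Prim(A) \cong X$ carries a free action — and the standard but worth-stating fact that pure infiniteness is preserved under stable isomorphism and passage to full corners. Neither of these is difficult, so the proof is essentially an assembly of the machinery already in place.
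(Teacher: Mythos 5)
Your proposal is correct and follows essentially the same route as the paper's own proof: identify $\Prim(A)\cong X$ via simplicity of $\K M_{k^\infty}$, note that minimality of $\alpha$ gives freeness, apply Lemma~\ref{lem:Rordam-Sierakowski} together with Lemma~\ref{lem:purely-infinite} to get pure infiniteness of the crossed product, and then pass to $B_{k,\alpha}$ using simplicity and stable isomorphism (the paper cites \cite[Proposition 4.1.8(i)]{rordam2002classification} for this last descent, exactly the fact you invoke about full corners).
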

\begin{proof}
    Let $A = C(X, \K M_{k^\infty})$. Let $\Phi_k$ be the trace scaling automorphism on $\K M_{k^\infty}$.
    Since $\K M_{k^\infty}$ is simple, we have $X \cong \Prim(A)$ via the homeomorphism sending $x$ to the ideal of functions vanishing at $x$. 
    The induced action of $\Phi_k \otimes \alpha$ on $\Prim(A)$ now corresponds to the action of $\alpha$ on $X$. Since $\alpha:X \rightarrow X$ is a minimal homeomorphism the $\Z$-action that it induces is free. 
    Hence, by Lemma \ref{lem:Rordam-Sierakowski} and Lemma \ref{lem:purely-infinite}, $A \rtimes_{\Phi_k \otimes \alpha} \Z$ is purely infinite.
    By Proposition \ref{prop:Sibbel-Winter}, $B_{k, \alpha}$ is simple and stably isomorphic to $A \rtimes_{\Phi_k \otimes \alpha} \Z$.
    Hence,  $B_{k, \alpha}$ is purely infinite by \cite[Proposition 4.1.8(i)]{rordam2002classification}. 
\end{proof}

\section{K-theory of \texorpdfstring{$B_{k,\alpha}$}{B(k,alpha)}}\label{sec:K-theory}

In this section, we consider the K-theory of the Kirchberg algebras $B_{k,\alpha}$ constructed in the previous section. The basic idea is to use Proposition \ref{prop:Sibbel-Winter} to relate the K-theory of $B_{k,\alpha}$ to the K-theory of a $\Z$-crossed product, which can be analysed by means of the Pimsner--Voiculescu exact sequence (\cite{PVexact}).\footnote{Similar computations can be carried out using the 6-term exact sequences from \cite{Cuntz1981ktheoryII} or \cite{katsura2004class}.} For a general Cantor minimal system $\alpha$, we will show that $K_0(B_{k,\alpha}) \neq 0$ and $K_1(B_{k,\alpha}) = 0$. We will then specialise to the case of odometers, where we can compute $K_0(B_{k,\alpha})$ explicitly, and we deduce Theorem \ref{thm:Diagonal-in-Kirchberg}. 

We begin with the application of the Pimsner--Voiculescu exact sequence.
\begin{prp}\label{prop:general-K-theory}
    Let $k \geq 2$. Let $X$ be a zero-dimensional compact metric space. Let $\alpha:X \rightarrow X$ be a homeomorphism.
    Let $\Phi_k$ be the trace scaling automorphism on the stabilised UHF algebra $\K M_{k^\infty}$.
    Let $A = C(X, \K M_{k^\infty}) \rtimes_{\Phi_k \otimes \alpha} \Z$.
    The K-theory of $A$ is given by
    \begin{align}
        K_0(A) &\cong \frac{C_{lc}(X, \Z[k^{-1}])}{\Im\left(\id - \frac{1}{k}T\right)},\label{eqn:K0-crossed-product}\\
        K_1(A) &\cong 0,\label{eqn:K1-crossed-product}
    \end{align}
    where $T:C_{lc}(X, \Z[k^{-1}]) \rightarrow C_{lc}(X, \Z[k^{-1}])$  is the translation map $f \mapsto f \circ \alpha^{-1}$. Moreover, the isomorphism \eqref{eqn:K0-crossed-product} can be chosen to map the $K_0$-class of a projection $p \in C(X, \K M_{k^\infty})$ to the element $\Tr_{k^\infty} \circ p + \Im\left(\id - \frac{1}{k}T\right)$.
\end{prp}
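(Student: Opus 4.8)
The plan is to apply the Pimsner--Voiculescu exact sequence to the crossed product $A = C(X, \K M_{k^\infty}) \rtimes_{\Phi_k \otimes \alpha} \Z$. First I would recall from Proposition~\ref{prop:K-theory-functions-into-UHF} that $C(X, \K M_{k^\infty})$ is a stable AF algebra with $K_0 \cong C_{lc}(X, K_0(M_{k^\infty}))$ and $K_1 = 0$. Under the identification \eqref{eqn:K0-UHF}, we have $K_0(M_{k^\infty}) = \Z[k^{-1}] \subseteq \Q$, since the supernatural number $k^\infty$ has exactly the divisors $b$ with $b \mid k^\infty$, i.e.\ the powers of (the prime factors of) $k$; so $K_0(C(X, \K M_{k^\infty})) \cong C_{lc}(X, \Z[k^{-1}])$ as ordered groups, via composition with $\Tr_{k^\infty}$.

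Next I would identify the map induced on $K_0$ by the automorphism $\Phi_k \otimes \alpha$. The automorphism $\alpha$ on $C(X)$ induces the translation $f \mapsto f \circ \alpha^{-1}$, which is exactly $T$ at the level of $C_{lc}(X, \Z[k^{-1}])$; and $\Phi_k$, being the trace-scaling automorphism with $\Tr_{k^\infty}(\Phi_k(x)) = \tfrac1k \Tr_{k^\infty}(x)$, induces multiplication by $\tfrac1k$ on $K_0(M_{k^\infty}) = \Z[k^{-1}]$ under the $\Tr_{k^\infty}$-identification. Hence $K_0(\Phi_k \otimes \alpha)$ is the map $\tfrac1k T$ (the two commute since one acts on the fibre and one on the base). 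The Pimsner--Voiculescu sequence then reads
\begin{equation*}
K_1(A) \to K_0(C(X,\K M_{k^\infty})) \xrightarrow{\ \id - K_0(\Phi_k\otimes\alpha)^{-1}\ } K_0(C(X,\K M_{k^\infty})) \to K_0(A) \to K_1(C(X,\K M_{k^\infty})) = 0,
\end{equation*}
together with the analogous lower row giving $K_1(A)$ as a subgroup of $\coker(\id - (\tfrac1kT)^{-1})$ on $K_1 = 0$, hence... wait — one must be slightly careful about whether it is $\id - \beta_*$ or $\id - \beta_*^{-1}$ in the chosen convention; either way the cokernel is isomorphic, since $\tfrac1kT$ is an automorphism of $C_{lc}(X,\Z[k^{-1}])$ and $\coker(\id - \gamma) \cong \coker(\id - \gamma^{-1})$ for any automorphism $\gamma$ (multiply a representative by $\gamma^{-1}$). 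So $K_1(A) = 0$ and $K_0(A) \cong C_{lc}(X,\Z[k^{-1}]) / \Im(\id - \tfrac1k T)$, which is \eqref{eqn:K0-crossed-product} and \eqref{eqn:K1-crossed-product}.

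Finally, for the naturality statement about the image of $[p]$: the map $K_0(C(X,\K M_{k^\infty})) \to K_0(A)$ in the Pimsner--Voiculescu sequence is induced by the canonical inclusion $C(X,\K M_{k^\infty}) \hookrightarrow A$, so it sends $[p]$ to $[p] \in K_0(A)$; chasing through the identification $K_0(C(X,\K M_{k^\infty})) \cong C_{lc}(X,\Z[k^{-1}])$ from Proposition~\ref{prop:K-theory-functions-into-UHF}, the class $[p]$ corresponds to $\Tr_{k^\infty} \circ p$, and the quotient map then sends it to $\Tr_{k^\infty}\circ p + \Im(\id - \tfrac1k T)$, as claimed. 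I expect the only genuinely delicate point to be pinning down the exact form of the boundary/translation map — i.e.\ that $\Phi_k$ induces multiplication by $1/k$ and the direction of the automorphism in the PV sequence — together with the harmless but necessary replacement $\coker(\id-\gamma^{-1}) \cong \coker(\id-\gamma)$; everything else is a direct combination of Propositions~\ref{prop:K-theory-functions-into-UHF} and the standard Pimsner--Voiculescu machinery.
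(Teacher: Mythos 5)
Your overall route---the Pimsner--Voiculescu sequence applied to $C(X,\K M_{k^\infty})\rtimes_{\Phi_k\otimes\alpha}\Z$, the identification $K_0(C(X,\K M_{k^\infty}))\cong C_{lc}(X,\Z[k^{-1}])$ via $\Tr_{k^\infty}$, the computation $(\Phi_k\otimes\alpha)_*=\tfrac1kT$, and deducing the ``moreover'' statement from the fact that the surjection onto $K_0(A)$ is induced by the canonical inclusion---is exactly the paper's argument. Your remark that the convention $\id-\beta_*$ versus $\id-\beta_*^{-1}$ is harmless is also correct; in fact $\Im(\id-\gamma)=\Im(\id-\gamma^{-1})$ and $\Ker(\id-\gamma)=\Ker(\id-\gamma^{-1})$ for any automorphism $\gamma$ of an abelian group, so even the explicit description of the isomorphism is unaffected.

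There is, however, a genuine gap in the $K_1$ computation. Since $K_1(C(X,\K M_{k^\infty}))=0$, exactness of the six-term sequence gives $K_1(A)\cong\Ker\bigl(\id-\tfrac1kT\bigr)$, a subgroup of $K_0(C(X,\K M_{k^\infty}))\cong C_{lc}(X,\Z[k^{-1}])$; it is not ``a subgroup of $\coker(\id-(\tfrac1kT)^{-1})$ on $K_1=0$'', and it does not vanish for formal reasons. You must prove that $\tfrac1kT$ has no non-zero fixed point in $C_{lc}(X,\Z[k^{-1}])$, and your write-up simply asserts ``So $K_1(A)=0$'' without addressing this. The paper supplies the missing step directly: if $\tfrac1kT(f)=f$, then $f(x)=k^{-n}f(\alpha^{-n}(x))$ for all $x\in X$ and $n\in\N$, so a non-zero $f$ would have infinite range, contradicting the fact that a locally constant function on a compact space takes only finitely many values. (Alternatively, $\id-\tfrac1k\widehat{T}$ is invertible on the Banach space $C(X)$ because $\|\tfrac1k\widehat{T}\|=\tfrac1k<1$, and injectivity restricts to the subgroup $C_{lc}(X,\Z[k^{-1}])$; this is the mechanism used in Lemma~\ref{lem:technical}.) With such an argument inserted, the rest of your proposal goes through as in the paper.
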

\begin{proof}
    By Proposition \ref{prop:K-theory-functions-into-UHF}, $K_1(C(X, \K M_{k^\infty})) = 0$.
    Hence, the Pimsner--Voiculescu exact sequence has the form
   \begin{equation}\label{eqn:PV-6-term-exact-seq}
    \begin{tikzcd}[column sep=2cm]
    K_0(C(X, \K M_{k^\infty})) \arrow[r,"\id - (\Phi_k \otimes \alpha)_*"]  &K_0(C(X, \K M_{k^\infty})) \arrow[r,"\iota_*"] &K_0(A)\arrow[d]\\
    K_1(A) \arrow[u] &0 \arrow[l]  &0 \arrow[l],
    \end{tikzcd}
\end{equation} 
    where $\iota:C(X, \K M_{k^\infty}) \rightarrow A$ is the inclusion map. By exactness, we have 
    \begin{align}
	K_0(A)  &\cong \frac{K_0(C(X, \K M_{k^\infty}))}{\Im(\id - (\Phi_k \otimes \alpha)_*)},\label{eqn:PV-K0}\\
	K_1(A)  &\cong \Ker(\id - (\Phi_k \otimes \alpha)_*).\label{eqn:PV-K1}
    \end{align}
    By Proposition \ref{prop:K-theory-functions-into-UHF}, we may identify $K_0(C(X, \K M_{k^\infty}))$ with the group of locally constant functions $C_{lc}(X, \Z[k^{-1}])$ by composing with $\Tr_{k^\infty}$. Let $p \in C(X, \K M_{k^\infty})$ be a projection and $x \in X$. Then
    \begin{align}
        \Tr_{k^\infty}((\Phi_k \otimes \alpha)(p)(x)) = \Tr_{k^\infty}(\Phi_k(p(\alpha^{-1}(x)))) = \tfrac{1}{k}\Tr_{k^\infty}(p(\alpha^{-1}(x))).
    \end{align}
    Hence, $(\Phi_k \otimes \alpha)_* = \frac{1}{k}T$.
    This completes the proof of \eqref{eqn:K0-crossed-product} with the specified isomorphism. To prove \eqref{eqn:K1-crossed-product}, we need to show that $ \frac{1}{k}T$ has no non-zero fixed points.
    Suppose $f \in C_{lc}(X, \Z[k^{-1}]) \setminus \{0\}$ with $ \frac{1}{k}T(f) = f$. Then $f(x) = k^{-n}f(\alpha^{-n}(x))$ for all $x \in X$ and $n \in \N$. Hence, $f$ has infinite range. However, since $X$ is compact, a locally constant function on $X$ has a finite range. This contradiction completes the proof of \eqref{eqn:K1-crossed-product}.
\end{proof}

We now analyse the $K_0$-group further using tools from functional analysis.
\begin{lem}\label{lem:technical} 
    Let $k \geq 2$. Let $\alpha:X \rightarrow X$ be a homeomorphism of the compact Hausdorff space $X$.
    Let $T:C_{lc}(X,\Z[k^{-1}]) \rightarrow C_{lc}(X,\Z[k^{-1}])$ be the translation given by $f \mapsto f \circ \alpha^{-1}$.  
        Let $f \in C_{lc}(X, \D)$. Then  $f \in \Im\left(\id - \frac{1}{k}T\right)$ if and only if the function $g \in C(X)$ defined by
    \begin{equation}\label{eqn:g-Series}
        g(x) = \sum_{i=0}^\infty \frac{1}{k^i}f(\alpha^{-i}(x))
    \end{equation}
    satisfies $g \in C_{lc}(X, \Z[k^{-1}])$.
\end{lem}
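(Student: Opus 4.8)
The plan is to identify $g$ as the \emph{unique} continuous solution of the equation $\left(\id - \tfrac1k T\right) h = f$ on $X$, and then transfer this back and forth between $C(X)$ and the subgroup $C_{lc}(X,\D)$. First I would check that $g$ is well defined: since $X$ is compact, $f$ is bounded, so the series in \eqref{eqn:g-Series} is dominated termwise by $\|f\|_\infty \sum_{i\geq 0} k^{-i}$ and hence converges uniformly by the Weierstrass M-test, giving $g \in C(X)$. Next I would verify the key identity $\left(\id - \tfrac1k T\right)g = f$: writing $(Tg)(x) = g(\alpha^{-1}(x)) = \sum_{i\geq 0} k^{-i} f(\alpha^{-i-1}(x))$ and reindexing with $j = i+1$ yields $(Tg)(x) = k\sum_{j\geq 1} k^{-j} f(\alpha^{-j}(x)) = k\bigl(g(x) - f(x)\bigr)$, so $\tfrac1k Tg = g - f$, as required.

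Then I would record that $\id - \tfrac1k T$ is injective on $C(X)$: if $h \in C(X)$ satisfies $h = \tfrac1k Th$, then iterating gives $h(x) = k^{-n} h(\alpha^{-n}(x))$ for all $n \in \N$, so $\|h\|_\infty \leq k^{-n}\|h\|_\infty$, and letting $n \to \infty$ (using $k \geq 2$) forces $h = 0$. With this in hand the equivalence falls out. Note first that $\id - \tfrac1k T$ restricts to an endomorphism of $C_{lc}(X,\D)$, since $\D$ is stable under multiplication by $k^{-1}$ and $T$ preserves local constancy and the value group. For the forward direction, if $f = \left(\id - \tfrac1k T\right)h$ with $h \in C_{lc}(X,\D) \subseteq C(X)$, then $h$ and $g$ are both continuous solutions of the same equation, so $h = g$ by injectivity, and hence $g \in C_{lc}(X,\D)$. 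Conversely, if $g \in C_{lc}(X,\D)$, then the identity $\left(\id - \tfrac1k T\right)g = f$ exhibits $f$ as an element of $\Im\left(\id - \tfrac1k T\right)$ computed within $C_{lc}(X,\D)$.

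I do not anticipate a serious obstacle here; the proof is essentially a fixed-point/geometric-series argument. The only point that needs a little care is keeping track of which ambient group the image $\Im\left(\id - \tfrac1k T\right)$ is taken in — namely $C_{lc}(X,\D)$, not $C(X)$ — and using uniqueness of the continuous solution to force any witness $h$ in the smaller group to coincide with the explicitly given $g$.
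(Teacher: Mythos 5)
Your proposal is correct and is essentially the paper's argument: both hinge on viewing the equation $(\id-\tfrac1k T)h=f$ inside $C(X)$, where the series \eqref{eqn:g-Series} gives the unique continuous solution, and then transferring membership in $C_{lc}(X,\D)$ back and forth. The only (cosmetic) difference is that the paper invokes invertibility of the extended operator via the norm bound $\|\tfrac1k\widehat{T}\|<1$ and the Neumann series, whereas you verify the identity $(\id-\tfrac1k T)g=f$ by reindexing and prove injectivity directly, which amounts to the same fixed-point/geometric-series reasoning.
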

\begin{proof}
    View $C_{lc}(X, \Z[k^{-1}]) \subseteq C(X)$. The translation $T$ extends to a bounded linear operator $\widehat{T}:C(X) \rightarrow C(X)$ with norm 1. Since $\|\tfrac{1}{k}\widehat{T}\| = \tfrac{1}{k} < 1$, we know that the operator $\id - \tfrac{1}{k}\widehat{T}$ is invertible with inverse given by the convergent series 
    \begin{equation}\label{eqn:Newton-Series}
        (\id - \tfrac{1}{k}\widehat{T})^{-1} = \sum_{i=0}^\infty \frac{1}{k^i}\widehat{T}^i.
    \end{equation}
    Suppose now that $f \in \Im(\id - \tfrac{1}{k}T)$. Then there exists $g \in C_{lc}(X, \D)$ with $f = (\id - \tfrac{1}{k}T)(g)$. Therefore, $f = (\id - \tfrac{1}{k}\widehat{T})(g)$. Hence, we have $g = (\id - \tfrac{1}{k}\widehat{T})^{-1}(f)$. Using \eqref{eqn:Newton-Series}, we obtain \eqref{eqn:g-Series}.
    Conversely, given $f \in C_{lc}(X,\D)$, the function $g \in C(X)$ given by \eqref{eqn:g-Series} is well defined as it satisfies $g = (\id - \tfrac{1}{k}\widehat{T})^{-1}(f)$, so we have $f = (\id - \tfrac{1}{k}\widehat{T})(g)$. If $g \in C_{lc}(X, \D)$, then  $f = (\id - \tfrac{1}{k}T)(g)$, so we have $f \in \Im(\id - \tfrac{1}{k}T)$. 
\end{proof}

Using this lemma, we can now show that the $K_0$-group of  $B_{k,\alpha}$ is never zero. 
\begin{prp}
    Let $k \geq 2$ and $\alpha:X \rightarrow X$ be a minimal homeomorphism of the Cantor space. Then $K_0(B_{k,\alpha}) \neq 0$ and $K_1(B_{k,\alpha}) = 0$. 
\end{prp}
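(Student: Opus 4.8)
The plan is to use the concrete description of $K_0(B_{k,\alpha})$ coming from Propositions~\ref{prop:Sibbel-Winter} and \ref{prop:general-K-theory}, together with the series criterion of Lemma~\ref{lem:technical}, to exhibit an explicit nonzero class. By Proposition~\ref{prop:Sibbel-Winter}(iii), $\K \otimes B_{k,\alpha} \cong A$ where $A = C(X,\K M_{k^\infty}) \rtimes_{\Phi_k \otimes \alpha} \Z$, so $K_*(B_{k,\alpha}) \cong K_*(A)$ by stability. Proposition~\ref{prop:general-K-theory} then gives $K_1(B_{k,\alpha}) \cong K_1(A) = 0$ immediately, which settles the second assertion. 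For the first assertion, I must show that the quotient $C_{lc}(X,\Z[k^{-1}]) / \Im(\id - \tfrac{1}{k}T)$ is nonzero, i.e.\ that $\Im(\id - \tfrac{1}{k}T)$ is a proper subgroup.

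The natural candidate for a class that is \emph{not} in the image is the constant function $\mathbbm{1}_X \in C_{lc}(X,\Z[k^{-1}])$ (equivalently, the image of $[1_{B_{k,\alpha}}]$, up to a unit, under the identification). By Lemma~\ref{lem:technical}, $\mathbbm{1}_X \in \Im(\id - \tfrac{1}{k}T)$ if and only if the function
\begin{equation*}
g(x) = \sum_{i=0}^\infty \frac{1}{k^i}\mathbbm{1}_X(\alpha^{-i}(x)) = \sum_{i=0}^\infty \frac{1}{k^i} = \frac{k}{k-1}
\end{equation*}
lies in $C_{lc}(X,\Z[k^{-1}])$. But $g$ is the constant function with value $\tfrac{k}{k-1}$, and $\tfrac{k}{k-1} \notin \Z[k^{-1}]$ for $k \geq 2$, since the denominator $k-1$ is coprime to $k$ (and $k - 1 \geq 1$, with $k-1 = 1$ only when $k = 2$, where $\tfrac{k}{k-1} = 2 \in \Z$ --- so this argument needs a small fix at $k=2$; see below). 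Hence for $k \geq 3$, $\mathbbm{1}_X$ is not in the image and $K_0(B_{k,\alpha}) \neq 0$.

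For $k = 2$ the constant function $\mathbbm{1}_X$ \emph{is} in the image, so I would instead pick a nonconstant locally constant $\Z$-valued function. Concretely, choose any proper clopen subset $U \subsetneq X$ with $\emptyset \neq U \neq X$ (which exists since $X$ is a Cantor space), and consider $f = \mathcal{X}_U$. Then $g(x) = \sum_{i=0}^\infty k^{-i}\mathcal{X}_U(\alpha^{-i}(x))$; the key point is that by minimality of $\alpha$, for a "generic" point $x$ (in fact for a dense $G_\delta$ of points, or one can argue directly) the sequence $(\mathcal{X}_U(\alpha^{-i}(x)))_{i \geq 0}$ is not eventually periodic, so $g(x)$ has a base-$k$ expansion that is not eventually periodic, hence $g(x) \notin \Q$, so certainly $g \notin C_{lc}(X,\Z[k^{-1}])$. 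The main obstacle is making this irrationality/non-periodicity argument clean and uniform in $k$: one must verify that minimality of a Cantor system really does force $(\mathcal{X}_U(\alpha^{-i}(x)))_i$ to be aperiodic for a suitable $x$ (a periodic itinerary for $\mathcal{X}_U$ at $x$ would, by density of the orbit, propagate to force $U$ or $X \setminus U$ to be too small --- this needs a short argument, e.g.\ that a fixed periodic pattern of period $p$ occurring along the whole forward orbit contradicts unique ergodicity or at least minimality when $0 < \mu(U) < 1$ for some invariant measure $\mu$). Alternatively, and perhaps more cleanly, one could sidestep the $k=2$ case entirely by invoking the explicit odometer computation that the paper carries out next (Theorem~\ref{thm:Diagonal-in-Kirchberg}), but that would be circular if this proposition is used in its proof, so I expect the authors to give the direct argument sketched here.
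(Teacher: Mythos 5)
Your reduction is exactly the paper's: stability plus Proposition~\ref{prop:Sibbel-Winter}(iii) and Proposition~\ref{prop:general-K-theory} give $K_1(B_{k,\alpha})=0$ and identify $K_0(B_{k,\alpha})$ with $C_{lc}(X,\Z[k^{-1}])/\Im(\id-\tfrac{1}{k}T)$, and Lemma~\ref{lem:technical} is the right tool for exhibiting a class outside the image. For $k\geq 3$ your witness, the constant function $1$, works and is in fact slightly slicker than the paper's choice: $g\equiv \tfrac{k}{k-1}\notin\Z[k^{-1}]$ since $k-1>1$ is coprime to $k$. (The paper instead uses $\mathcal{X}_Y$ for a proper nonempty clopen $Y$, uniformly in $k$.)

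The $k=2$ branch, however, has a genuine gap. Your plan is to find $x$ for which the itinerary $(\mathcal{X}_U(\alpha^{-i}(x)))_{i\geq 0}$ is not eventually periodic, so that $g(x)\notin\Q$. This is false in general: for an odometer (precisely the minimal systems the paper uses later), every clopen $U$ is a finite union of cylinders of some depth, and the visits of any orbit to such cylinders are periodic, so the itinerary of \emph{every} point with respect to \emph{every} clopen set is periodic and $g(x)$ is always rational. No choice of $U$, and no appeal to unique ergodicity or invariant measures, can produce the irrationality you want in these systems. The fix is to notice that you are proving far more than needed: you must only rule out $g(x)\in\Z[k^{-1}]$, not $g(x)\in\Q$. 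Since the digits $\mathcal{X}_U(\alpha^{-i}(x))$ lie in $\{0,1\}$, the series for $g(x)$ is a base-$k$ expansion, and a number in $\Z[k^{-1}]$ has a terminating expansion, forcing the digit string to be eventually $0$ (or, when $k=2$, eventually $1$). Minimality rules out both at once: the backward orbit $\{\alpha^{-i}(x)\}_{i\in\N}$ is dense, hence meets $U$ and $X\setminus U$ infinitely often. This is the paper's argument; it needs no aperiodicity, works for every $x$, and is uniform in $k\geq 2$, so you could in fact drop the constant-function case and run it for all $k$ at once.
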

\begin{proof}
    By Proposition \ref{prop:Sibbel-Winter}, $\K \otimes B_{k,\alpha} \cong C(X, \K M_{k^\infty}) \rtimes_{\Phi_k \otimes \alpha} \Z$. Since K-theory is stable, it follows from Proposition \ref{prop:general-K-theory} that $K_1(B_{k,\alpha}) = 0$ and $K_0(B_{k,\alpha})$ is isomorphic to the quotient of $C_{lc}(X, \Z[k^{-1}])$ by $\Im(\id-\tfrac{1}{k}T)$ where $T(f) = f \circ \alpha^{-1}$. 
    
    Let $Y \subseteq X$ be a non-trivial, proper clopen subset. 
    Let $\mathcal{X}_Y$ be the characteristic function of $Y$.
    Substituting $\mathcal{X}_Y$ into \eqref{eqn:g-Series}, we consider the function $g \in C(X)$ given by  
    \begin{equation}\label{eqn:chi-Series}
        g(x) = \sum_{i=0}^\infty \frac{1}{k^i} \mathcal{X}_Y(\alpha^{-i}(x))
    \end{equation}
    for all $x \in X$. 
    Since $\mathcal{X}_Y$ is $\{0,1\}$-valued, we may view the right hand side of \eqref{eqn:chi-Series} as a base-$k$ expansion of the real number $g(x)$.
    Suppose $g(x) \in \Z[k^{-1}]$ for some $x \in X$. Then either $\mathcal{X}_Y(\alpha^{-i}(x)) = 0$ for all sufficiently large $i \in \N$, or $k=2$ and $\mathcal{X}_Y(\alpha^{-i}(x)) = 1$ for all sufficiently large $i \in \N$. By minimality of $\alpha$, the backward orbit $\{\alpha^{-i}(x) \mid i \in \N\}$ is dense in $X$ and thus, it intersects infinitely often with $Y$ and with $X / Y$, so neither option can hold. Therefore, $\mathcal{X}_Y \not\in \Im(\id-\tfrac{1}{k}T)$ by Lemma \ref{lem:technical}. Hence, $K_0(B_{k,\alpha}) \neq 0$.
\end{proof}

This lets us answer the question posed by Mikael R{\o}rdam at the conference ``Noncommutativity in the North'' in Gothenburg in the Summer of 2024.
\begin{cor}
    Let $\alpha:X \rightarrow X$ be a minimal homeomorphism of the Cantor space. Then $B_{2,\alpha}$ is not isomorphic to $\O_2$.
\end{cor}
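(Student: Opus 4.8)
The plan is to read this off immediately from the preceding proposition together with Cuntz's K-theory computation. First I would recall that, by Theorem~\ref{thm:Cuntz} (due to Cuntz), $K_0(\O_2) \cong \Z_{2-1} = \Z_1 = 0$. On the other hand, the proposition just proved shows that $K_0(B_{2,\alpha}) \neq 0$ for every minimal homeomorphism $\alpha$ of the Cantor space. Since $K_0$ is an invariant of $*$-isomorphism, the two C$^*$-algebras cannot be isomorphic.

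More precisely, the key steps are: (i) invoke the previous proposition to get $K_0(B_{2,\alpha}) \neq 0$; (ii) invoke Theorem~\ref{thm:Cuntz} to get $K_0(\O_2) = 0$; (iii) conclude that any isomorphism $B_{2,\alpha} \to \O_2$ would induce a group isomorphism $K_0(B_{2,\alpha}) \to K_0(\O_2)$, which is impossible since a non-zero group is not isomorphic to the trivial group. Hence no such isomorphism exists.

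There is no genuine obstacle here; the corollary is a direct consequence of the $K_0$-computation carried out above and the well-known functoriality of K-theory. The only point worth mentioning is that the argument is uniform in $\alpha$: it applies to every minimal homeomorphism of the Cantor space, since the previous proposition makes no further assumption on $\alpha$. In particular, the witness used in that proposition — the characteristic function $\mathcal{X}_Y$ of a non-trivial proper clopen set $Y \subseteq X$, which fails to lie in $\Im(\id - \tfrac12 T)$ by Lemma~\ref{lem:technical} and the density of backward orbits under minimality — already produces a non-zero element of $K_0(B_{2,\alpha})$, and this is all that is needed.
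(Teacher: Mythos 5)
Your argument is correct and is exactly the paper's proof: the corollary follows from $K_0(\O_2)=0$ (Theorem~\ref{thm:Cuntz}) together with $K_0(B_{2,\alpha})\neq 0$ from the preceding proposition, since $K_0$ is an isomorphism invariant. You merely spell out the functoriality step that the paper leaves implicit.
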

\begin{proof}
    We have $K_0(\O_2) = 0$ but $K_0(B_{2,\alpha}) \neq 0$.
\end{proof}

We now specialise to the case that $\alpha$ is an odometer. In this case, we can completely compute $K_0(B_{k,\alpha})$ by taking advantage of the fact that K-theory respects inductive limits.

We begin by fixing our notational conventions for odometers. Let $(n_i)_{i\in \N}$ be a strictly increasing sequence in $\N$ such that $n_i \mid n_{i+1}$.
We identify the Cantor space $X$ with the inverse limit of cyclic groups
\begin{equation}\label{eqn:projective-limit}
	X \rightarrow \cdots \xrightarrow{\rho_{5,4}} \Z_{n_4} \xrightarrow{\rho_{4,3}} \Z_{n_3} \xrightarrow{\rho_{3,2}} \Z_{n_2} \xrightarrow{\rho_{2,1}} \Z_{n_1},
\end{equation}
with the connecting group homomorphisms given by $\rho_{i+1,i}(x + n_{i+1}\Z) = x + {n_i}\Z$. The minimal homeomorphism $\alpha:X \rightarrow X$ given by $x \mapsto x+1$ is called the odometer corresponding to the sequence $(n_i)_{i\in \N}$. 

The action of $\alpha$ respects the inverse limit structure \eqref{eqn:projective-limit} and induces a cyclic shift on $\Z_{n_i}$ for each $i \in \N$. This gives rise to an inductive limit structure for the crossed product $C(X, \K M_{k^\infty}) \rtimes_{\Phi_k \otimes \alpha} \Z$.
The following proposition computes the $K_0$-group of the finite stages.
\begin{prp}\label{prop:K-theory-Zn-cross-product}
    Let $n \in \N$ and let $\alpha:\Z_n \rightarrow \Z_n$ be given by $x \mapsto x+1$.
    Let $k \geq 2$ and let $\Phi_k$ be the trace scaling automorphism on the stabilised UHF algebra $\K M_{k^\infty}$.
    Let $A = C(\Z_n, \K M_{k^\infty}) \rtimes_{\Phi_k \otimes \alpha} \Z$.
    Then there exists an isomorphism 
    \begin{equation}  \label{eqn:bar-psi}
        \bar{\psi}:K_0(A) \rightarrow \frac{\Z[k^{-1}]}{(k^n-1)\Z[k^{-1}]} 
   \end{equation}
    where 
    \begin{equation} \label{eqn:bar-psi-property} 
       \bar{\psi}([p]_{K_0(A)}) = \sum_{j=0}^{n-1} k^j\Tr_{k^\infty}(p(j)) + (k^n-1)\Z[k^{-1}]
   \end{equation}    
   for all projections $p \in C(\Z_n, \K M_{k^\infty}) \subseteq A$. 
\end{prp}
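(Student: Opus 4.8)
The plan is to apply Proposition~\ref{prop:general-K-theory} with $X = \Z_n$ (a finite, hence zero-dimensional, compact metric space) and $\alpha$ the cyclic shift. This immediately gives $K_1(A) = 0$ and
\[
    K_0(A) \cong \frac{C_{lc}(\Z_n, \Z[k^{-1}])}{\Im\left(\id - \tfrac{1}{k}T\right)},
\]
with the $K_0$-class of a projection $p$ mapping to $\Tr_{k^\infty}\circ p + \Im(\id - \tfrac1k T)$. Since $\Z_n$ is finite and discrete, $C_{lc}(\Z_n, \Z[k^{-1}])$ is just the free $\Z[k^{-1}]$-module $\Z[k^{-1}]^n$, with $T$ acting by the cyclic permutation of coordinates. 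So the whole computation reduces to understanding the cokernel of $\id - \tfrac{1}{k}T$ on $\Z[k^{-1}]^n$ where $T$ is the $n$-cycle.

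First I would set up the isomorphism $\psi: \Z[k^{-1}]^n \to \Z[k^{-1}]$ given by $(a_0, \dots, a_{n-1}) \mapsto \sum_{j=0}^{n-1} k^j a_j$ — equivalently, identify $f \in C_{lc}(\Z_n,\Z[k^{-1}])$ with $\sum_j k^j f(j)$. The key algebraic fact to verify is that $\psi$ carries $\Im(\id - \tfrac1k T)$ onto $(k^n - 1)\Z[k^{-1}]$. To see this, compute $\psi$ applied to $(\id - \tfrac1k T)(e_j)$ for the standard basis vectors $e_j$ (or to $(\id-\tfrac1k T)(f)$ for general $f$): one finds that $\psi((\id - \tfrac1k T)(e_j)) = k^j - k^{j-1}$ for $j \geq 1$, while for $j = 0$, since $T$ sends coordinate $0$ to coordinate $1$, we get $\psi((\id-\tfrac1k T)(e_0)) = 1 - \tfrac1k \cdot k^n = 1 - k^{n-1}$. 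Wait — more carefully: $T$ is the translation $f \mapsto f\circ\alpha^{-1}$, so $(Tf)(x) = f(x-1)$; thus $T$ maps the indicator of $\{0\}$ to the indicator of $\{1\}$, and in general $\psi(Tf) = \sum_j k^j f(j-1) = \sum_i k^{i+1} f(i)$ where the index $i = n-1$ contributes $k^n f(n-1)$. Hence $\psi(Tf) \equiv k\,\psi(f) - (k^n-1)f(n-1) \pmod{(k^n-1)}$, so $\psi\big((\id-\tfrac1k T)f\big) \equiv \psi(f) - \psi(f) + \tfrac{k^n-1}{k}f(n-1) \equiv 0 \pmod{(k^n-1)\Z[k^{-1}]}$ (using that $k$ is invertible in $\Z[k^{-1}]$, and $k^n - 1$ is coprime to $k$, so $(k^n-1)\Z[k^{-1}] = \tfrac{k^n-1}{k}\Z[k^{-1}]$). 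This shows $\psi(\Im(\id - \tfrac1k T)) \subseteq (k^n-1)\Z[k^{-1}]$; the reverse inclusion follows by exhibiting preimages — e.g.\ $\tfrac{k^n-1}{k}e_{n-1}$ maps under $\id - \tfrac1k T$ to something $\psi$ sends to $k^n - 1$ (after handling the $T$-image wrapping to coordinate $0$), and $(k^n-1)\Z[k^{-1}]$ is generated by $k^n-1$ over $\Z[k^{-1}]$. I would do this last surjectivity check by a clean direct computation, perhaps more elegantly by showing $\id - \tfrac1k T$ is injective on $\Z[k^{-1}]^n$ (its "determinant" is essentially $1 - k^{-n}$, a unit times $k^n-1$ over $\Z[k^{-1}]$... except $k^n-1$ is \emph{not} a unit, so injectivity plus a rank/index count gives the cokernel order).

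Then I would descend $\psi$ to the quotient to obtain $\bar\psi: K_0(A) \to \Z[k^{-1}]/(k^n-1)\Z[k^{-1}]$, an isomorphism, and read off from the formula for $\psi$ on indicator functions that $\bar\psi([p]_{K_0(A)}) = \sum_{j=0}^{n-1} k^j \Tr_{k^\infty}(p(j)) + (k^n-1)\Z[k^{-1}]$, since under the identification of $K_0(C(\Z_n,\K M_{k^\infty}))$ with $C_{lc}(\Z_n,\Z[k^{-1}])$ from Proposition~\ref{prop:K-theory-functions-into-UHF} the class $[p]$ corresponds to the function $j \mapsto \Tr_{k^\infty}(p(j))$, and $\psi$ of that function is exactly the stated sum.

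**Main obstacle.** The one genuinely fiddly point is the boundary/wraparound term in computing $\psi \circ (\id - \tfrac1k T)$: because $T$ is a cyclic rather than linear shift, the coordinate $n-1$ feeds back into coordinate $0$ with a factor $k^n$ instead of $k$, and I must track that this discrepancy is exactly a multiple of $k^n - 1$, using that $k$ is a unit in $\Z[k^{-1}]$ and hence $(k^n-1)\Z[k^{-1}] = (1 - k^{-n})\Z[k^{-1}]$. Surjectivity of $\bar\psi$ (equivalently, that the image of $\id - \tfrac1k T$ is exactly the preimage of $(k^n-1)\Z[k^{-1}]$, not something smaller) is the step I'd be most careful with; I expect it is cleanest to argue that $\id - \tfrac1k T$ is injective — if $(\id-\tfrac1k T)f = 0$ then $f = \tfrac1k Tf = \cdots = \tfrac1{k^{mn}} T^{mn}f = \tfrac1{k^{mn}} f$ for all $m$, forcing $f = 0$ by boundedness — and then the induced map on the finite-index quotient $\Z[k^{-1}]^n / (k^n-1)\Z[k^{-1}]^n \cong \Z_{k^n-1}^n$ (where $T$ is still invertible and the $k$'s are units) has cokernel of the same order $k^n - 1$ as $\Z[k^{-1}]/(k^n-1)\Z[k^{-1}]$, pinning down the isomorphism once the containment $\psi(\Im(\id-\tfrac1k T)) \subseteq (k^n-1)\Z[k^{-1}]$ is known.
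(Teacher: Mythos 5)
Your outline follows the paper's own route up to the last step: apply Proposition~\ref{prop:general-K-theory} with $X=\Z_n$, define $\psi(f)=\sum_{j=0}^{n-1}k^jf(j)$, check that $\psi$ kills $\Im\bigl(\id-\tfrac1kT\bigr)$ via the wraparound identity (your exact formula $\psi(Tf)=k\psi(f)-(k^n-1)f(n-1)$ is the same computation the paper does using $k^{-1}\equiv k^{n-1}$ mod $(k^n-1)\Z[k^{-1}]$), descend to $\bar\psi$, note surjectivity from functions supported at $0$, and read off \eqref{eqn:bar-psi-property} through the identification of Proposition~\ref{prop:K-theory-functions-into-UHF}. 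All of that is correct.

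The genuine gap is injectivity of $\bar\psi$, which you mislabel as surjectivity (your parenthetical does state the right condition: $\Im\bigl(\id-\tfrac1kT\bigr)$ must be all of $\psi^{-1}\bigl((k^n-1)\Z[k^{-1}]\bigr)$), and neither of your two sketches establishes it as written. Exhibiting preimages so that $\psi\bigl(\Im(\id-\tfrac1kT)\bigr)\supseteq(k^n-1)\Z[k^{-1}]$ is not enough even if carried out: you would additionally need $\Ker\psi\subseteq\Im\bigl(\id-\tfrac1kT\bigr)$, which you never address (and your particular element is off: $\psi\bigl((\id-\tfrac1kT)\tfrac{k^n-1}{k}e_{n-1}\bigr)=(k^n-1)^2/k^2$, not $k^n-1$). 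Your second sketch -- injectivity of $\id-\tfrac1kT$ (the iteration argument is fine) plus an index count -- can be completed, but not via the reduction you propose: identifying $\coker\bigl(\id-\tfrac1kT\bigr)$ with the cokernel of the induced map on $\Z_{k^n-1}^n$ already presupposes $(k^n-1)\Z[k^{-1}]^n\subseteq\Im\bigl(\id-\tfrac1kT\bigr)$, and you give no argument that the reduced cokernel has order $k^n-1$. A correct completion is Smith normal form over the PID $\Z[k^{-1}]$: $\det\bigl(\id-\tfrac1kT\bigr)=1-k^{-n}$ is an associate of $k^n-1$, so the cokernel is finite of order $k^n-1$, and your surjection onto $\Z[k^{-1}]/(k^n-1)\Z[k^{-1}]\cong\Z_{k^n-1}$ is then bijective. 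The paper avoids all of this with a short relation argument you may prefer: writing $\tilde\delta_j$ for the class of the indicator of $\{j\}$, the relations $\tilde\delta_{j+1}=k\tilde\delta_j$ give $\tilde\delta_j=k^j\tilde\delta_0$ and $(k^n-1)\tilde\delta_0=0$, so every class equals $\bigl(\sum_jk^ja_j\bigr)\tilde\delta_0$, and $\bar\psi(\tilde x)=0$ forces $\tilde x\in(k^n-1)\Z[k^{-1}]\,\tilde\delta_0=\{0\}$.
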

\begin{proof}
        Applying Proposition \ref{prop:general-K-theory}, we obtain
        \begin{align}
            K_0(A) &\cong \frac{C(\Z_n, \Z[k^{-1}])}{\Im\left(\id - \frac{1}{k}T\right)},\label{eqn:K0-Zn}
        \end{align}   
        where $Tf(x)=f(x-1)$ for all $f \in C(\Z_n, \Z[k^{-1}])$.  Moreover, the isomorphism \eqref{eqn:K0-Zn} can be taken to map the $K_0$-class of a projection $p \in C(\Z_n, \K M_{k^\infty})$ to $\Tr_{k^\infty} \circ p + \Im\left(\id - \tfrac{1}{k}T\right)$. We treat this isomorphism as an identification.
        
        Consider the group homomorphism
	\begin{equation}\label{eqn:K-theory-crossed-product-map}
        \begin{split}
        \psi:C(\Z_n, \D) &\rightarrow \displaystyle\frac{\D}{(k^{n}-1)\D}\\
            f &\mapsto \sum_{j=0}^{n-1} k^j f(j) + (k^{n}-1)\D,
        \end{split}
	\end{equation}
        where we view the index $j$ as an element of $\Z_n$.
	We compute that
	\begin{equation}
	\begin{split}
		\psi\left(\frac{1}{k}T(f)\right) &= \sum_{j=0}^{n-1} k^{j-1} f(j - 1) + (k^{n}-1)\D\\
		&= k^{-1}f(-1) + \sum_{j=1}^{n-1} k^{j-1} f(j - 1) + (k^{n}-1)\D\\
		&= k^{-1}f(-1) + \sum_{j=0}^{n-2} k^{j} f(j) + (k^{n}-1)\D\\
		&= k^{n-1}f(n-1) + \sum_{j=0}^{n-2} k^{j} f(j) + (k^{n}-1)\D\\
		&= \psi(f),
	\end{split}
	\end{equation}
	where we have used that $k^{-1} + (k^{n}-1)\D = k^{n-1} + (k^{n}-1)\D$ in the quotient ring. Therefore, $\psi$ descends to a well-defined group homomorphism $\bar{\psi}$ from the quotient of $C(\Z_n, \D)$ by $\Im\left(\id - \frac{1}{k}T\right)$. Identifying this quotient with $K_0(A)$, we see that \eqref{eqn:bar-psi-property} follows from \eqref{eqn:K-theory-crossed-product-map}.
    
    It remains to show that $\bar{\psi}$ is an isomorphism.
    By considering functions supported on $0 \in \Z_n$, it's clear from \eqref{eqn:K-theory-crossed-product-map} that $\psi$ is surjective. Hence, $\bar{\psi}$ is also surjective.  We next show that $\bar{\psi}$ is injective.
    Let $\delta_j$ for $j \in \Z_n$ be the standard $\D$-basis for $C(\Z_n, \D)$ given by functions that evaluate to 1 at the given element $j \in \Z_n$ and zero otherwise. Then $T(\delta_j) = \delta_{j+1}$ for each $j \in \Z_n$. 
    Writing $\tilde{\delta}_j = \delta_j + \Im\left(\id - \frac{1}{k}T\right)$ for $j \in \Z_n$, we have 
\begin{align}
	\tilde{\delta}_j &= k^{j}\tilde{\delta}_0 \text{ for } j = 0,1,\ldots, n-1, \text{ and } \label{eqn:delta-j}\\
	\tilde{\delta}_0 &= k^{n}\tilde{\delta}_0. \label{eqn:delta-0}
\end{align}
Let $x \in C(\Z_n, \D)$ and let $\tilde{x} = x + \Im\left(\id - \frac{1}{k}T\right)$. Writing $x = \sum_{j \in \Z_n} a_j\delta_j$ for $a_j \in \D$, we have $\tilde{x} = \left(\sum_{j \in \Z_n} k^{j}a_j\right)\tilde{\delta}_0$ by \eqref{eqn:delta-j}. 
Suppose $\bar{\psi}(\tilde{x}) = 0$. Then $\psi(x) = \sum_{j \in \Z_n} k^{j}a_j \in (k^{n}-1)\D$. Hence, $\tilde{x}  = (k^{n}-1)d\tilde{\delta}_0$ for some $d \in \D$. However, it follows from $\eqref{eqn:delta-0}$, that $(k^{n}-1)\tilde{\delta}_0 = 0$. Hence, $\tilde{x} = 0$, completing the proof that $\bar{\psi}$ is injective. Therefore, $\bar{\psi}$ is an isomorphism.
\end{proof}

We are now ready to compute the K-theory of $C(X, \K M_{k^\infty}) \rtimes_{\Phi_k \otimes \alpha} \Z$ when $\alpha$ is an odometer action.
\begin{prp}\label{prop:odometer-crossed-product}
    Let $(n_i)_{i=1}^\infty$ be a strictly increasing sequence in $\N$ with $n_i | n_{i+1}$ for all $i \in \N$.
    Let $\alpha:X \rightarrow X$ be the odometer corresponding to $(n_i)_{i=1}^\infty$.
    Let $k \geq 2$ and let $\Phi_k$ be the trace scaling automorphism on the stabilised UHF algebra $\K M_{k^\infty}$.
    Let $A = C(X, \K M_{k^\infty}) \rtimes_{\Phi_k \otimes \alpha} \Z$. 
    Then $K_1(A) = 0$ and $K_0(A)$ is given by the inductive limit
    \begin{equation}\label{eqn:inductive-limit-UHF}
	\Z_{m_1} \xrightarrow{\eta_{1,2}} \Z_{m_2}  \xrightarrow{\eta_{2,3}} \Z_{m_3} \xrightarrow{\eta_{3,4}} \cdots,
    \end{equation}
    where $m_i = k^{n_i}-1$ and the connecting map $\eta_{i,i+1}:\Z_{m_i} \rightarrow \Z_{m_{i+1}}$ is the group homomorphism $\eta_{i,i+1}(x) = \frac{m_{i+1}}{m_{i}}x$ for $x \in \Z_{m_i}$.
    Moreover, the $K_0$-class of the constant projection $e_{11} \otimes 1_{M_{k^\infty}} \otimes 1_{C(X)} \in C(X, \K M_{k^\infty}) \subseteq A$ corresponds to the sequence of elements $\frac{m_i}{k-1} + m_i\Z$ in the successive stages of inductive limit.
\end{prp}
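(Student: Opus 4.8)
The plan is to realise $A$ as an inductive limit of the building blocks of Proposition~\ref{prop:K-theory-Zn-cross-product} and then pass to K-theory by continuity. First I would dualise the inverse limit presentation~\eqref{eqn:projective-limit}: it gives $C(X) \cong \varinjlim_i C(\Z_{n_i})$ with connecting $*$-homomorphisms $f \mapsto f \circ \rho_{i+1,i}$, and the odometer $\alpha$ restricts on the $i$-th stage to the cyclic shift $\alpha_i\colon x \mapsto x+1$ on $\Z_{n_i}$ (this is exactly the compatibility encoded in~\eqref{eqn:projective-limit}). Tensoring with the fixed C$^*$-algebra $\K M_{k^\infty}$ and then forming the crossed product by $\Z$ both preserve inductive limits, so
\begin{equation*}
A \;\cong\; \varinjlim_i A_i, \qquad A_i \,:=\, C(\Z_{n_i}, \K M_{k^\infty}) \rtimes_{\Phi_k \otimes \alpha_i} \Z ,
\end{equation*}
with connecting maps $\lambda_{i,i+1}\colon A_i \to A_{i+1}$ coming from $f \mapsto f \circ \rho_{i+1,i}$ on the coefficient algebras. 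Applying Proposition~\ref{prop:general-K-theory} directly to the zero-dimensional space $X$ already gives $K_1(A) = 0$, so it remains to compute $K_0(A) \cong \varinjlim_i K_0(A_i)$.

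Next I would set up the identifications. By Proposition~\ref{prop:K-theory-Zn-cross-product}, each $\bar\psi_i$ identifies $K_0(A_i)$ with $\D/m_i\D$ (where $m_i = k^{n_i}-1$), sending the class of a projection $p \in C(\Z_{n_i}, \K M_{k^\infty})$ to $\sum_{j=0}^{n_i-1} k^j \Tr_{k^\infty}(p(j)) + m_i\D$; and since $\gcd(k,m_i) = 1$, the inclusion $\Z \hookrightarrow \D$ induces an isomorphism $\Z_{m_i} \xrightarrow{\,\cong\,} \D/m_i\D$ by the argument of Proposition~\ref{prop:K0-UHF-quotient} (with $\D$ in place of $K_0(M_\s)$). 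To read off the connecting map I would use that $K_0(A_i)$ is generated by classes of projections in $C(\Z_{n_i}, \K M_{k^\infty})$ --- this follows from the Pimsner--Voiculescu sequence in the proof of Proposition~\ref{prop:general-K-theory}, since $K_1(C(\Z_{n_i}, \K M_{k^\infty})) = 0$ forces the inclusion-induced map to be surjective --- so it suffices to track such a class $[p]$. Its image under $\lambda_{i,i+1}$ is $[p \circ \rho_{i+1,i}]$, and writing $d = n_{i+1}/n_i$ and $\ell = a n_i + j$ with $0 \le a < d$ and $0 \le j < n_i$,
\begin{align*}
\bar\psi_{i+1}\bigl([p \circ \rho_{i+1,i}]\bigr)
&= \sum_{\ell=0}^{n_{i+1}-1} k^\ell \Tr_{k^\infty}\!\bigl(p(\ell \bmod n_i)\bigr) + m_{i+1}\D \\
&= \Bigl(\sum_{a=0}^{d-1} k^{a n_i}\Bigr)\sum_{j=0}^{n_i-1} k^j \Tr_{k^\infty}(p(j)) + m_{i+1}\D .
\end{align*}
Since $\sum_{a=0}^{d-1} k^{a n_i} = (k^{n_{i+1}}-1)/(k^{n_i}-1) = m_{i+1}/m_i \in \Z$, the map induced by $\lambda_{i,i+1}$ on $K_0$ is exactly multiplication by $m_{i+1}/m_i$ from $\D/m_i\D$ to $\D/m_{i+1}\D$; being multiplication by an integer, it is compatible with the identifications $\Z_{m_i} \cong \D/m_i\D$ and equals $\eta_{i,i+1}$. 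Continuity of $K_0$ then gives $K_0(A) \cong \varinjlim(\Z_{m_i}, \eta_{i,i+1})$.

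Finally I would pin down the class of $e = e_{11} \otimes 1_{M_{k^\infty}} \otimes 1_{C(X)}$: it is the image in $A$ of the corresponding constant projection $e_i \in C(\Z_{n_i}, \K M_{k^\infty}) \subseteq A_i$ for every $i$, and since $\Tr_{k^\infty}(e_{11} \otimes 1_{M_{k^\infty}}) = 1$ the formula for $\bar\psi_i$ gives $\bar\psi_i([e_i]) = \sum_{j=0}^{n_i-1} k^j + m_i\D = \tfrac{k^{n_i}-1}{k-1} + m_i\D = \tfrac{m_i}{k-1} + m_i\D$, i.e.\ $\tfrac{m_i}{k-1} + m_i\Z$ in $\Z_{m_i}$; hence $[e]$ is represented by $\tfrac{m_i}{k-1} + m_i\Z$ at stage $i$ of the limit, as claimed.

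The structural reduction ($A = \varinjlim A_i$ and continuity of K-theory) and the $K_1$-computation should be routine. The hard part will be the identification of the connecting maps: one has to carry Proposition~\ref{prop:K-theory-Zn-cross-product}'s trace formula through the pullback along $\rho_{i+1,i}$, recognise the integer factor $m_{i+1}/m_i$ via the geometric-series identity, and check that passing from $\D/m_i\D$ to $\Z_{m_i}$ turns this into precisely $\eta_{i,i+1}$.
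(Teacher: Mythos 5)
Your proposal is correct and follows essentially the same route as the paper: realise $A$ as the inductive limit of the crossed products $A_i = C(\Z_{n_i},\K M_{k^\infty})\rtimes_{\Phi_k\otimes\alpha_i}\Z$ (the paper cites \cite[Theorem 9.4.34]{GKPTbook} for this equivariant limit step), apply continuity of $K_0$, compute the connecting maps from the trace formula of Proposition~\ref{prop:K-theory-Zn-cross-product} via the geometric-series identity, and evaluate the class of $e_{11}\otimes 1_{M_{k^\infty}}\otimes 1_{C(X)}$. The only cosmetic difference is that the paper tracks a single generator using cyclicity of $K_0(A_i)$, whereas you track arbitrary projection classes using surjectivity of $\iota_*$ from the Pimsner--Voiculescu sequence; both are fine.
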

\begin{proof}
    By Proposition \ref{prop:general-K-theory}, $K_1(A) = 0$.
    Using \eqref{eqn:projective-limit}, we can view $C(X, \K M_{k^\infty})$ as the inductive limit
\begin{equation}\label{eqn:inductive-limit}
	C(\Z_{n_1}, \K M_{k^\infty}) \xrightarrow{\iota_{1,2}} C(\Z_{n_2}, \K  M_{k^\infty}) \xrightarrow{\iota_{2,3}}  \cdots \rightarrow C(X, \K M_{2^\infty}),
\end{equation}
where the connecting maps are given by $\iota_{i,i+1}(f)(x+n_{i+1}\Z) = f(x+n_i\Z)$. The automorphism $\Phi_k \otimes \alpha$ preserves this inductive limit structure, i.e.\ we have an automorphism of inductive limits  
\begin{equation}\label{eqn:automorphism-inductive-limit}
    \begin{tikzcd}[column sep = small]
    C(\Z_{n_1},\K M_{k^\infty}) \arrow[r,"\iota_{1,2}"] \arrow[d,"\Phi_k \otimes \alpha_1"] &C(\Z_{n_2}, \K M_{k^\infty}) \arrow[r,"\iota_{2,3}"]\arrow[d,"\Phi_k \otimes \alpha_2"]  &\cdots \arrow[r] &C(X, \K M_{k^\infty})\arrow[d,"\Phi_k \otimes \alpha"]\\
    C(\Z_{n_1}, \K  M_{k^\infty}) \arrow[r,"\iota_{1,2}"] &C(\Z_{n_2}, \K  M_{k^\infty}) \arrow[r,"\iota_{2,3}"]  &\cdots \arrow[r] &C(X, \K  M_{k^\infty}),
    \end{tikzcd}
\end{equation}
where $(\Phi_k \otimes \alpha_i)(f)(x) = \Phi_k(f(x-1))$ for $i\in\N$, $f \in C(\Z_{n_i}, \K M_{k^\infty}) $ and $x \in \Z_{n_i}$. 

By \cite[Theorem 9.4.34]{GKPTbook},  $A$ is isomorphic to the inductive limit of the algebras $A_i = C(\Z_{n_i}, \K M_{k^\infty}) \rtimes_{\Phi_k \otimes \alpha_i} \Z$ with connecting maps given by $\iota_{i,i+1} \rtimes \id_\Z$.
Since K-theory respects inductive limits, it follows that $K_0(A)$ is the inductive limit of $K_0(A_i)$ with respect to the connecting maps $K_0(\iota_{i,i+1} \rtimes \id_\Z)$. 

By Propositions \ref{prop:K-theory-Zn-cross-product} and \ref{prop:K0-UHF-quotient}, we have $K_0(A_i) \cong \Z[k^{-1}]/m_i\Z[k^{-1}] \cong \Z_{m_i}$ where $m_i = k^{n_i}-1$. As $K_0(A_i)$ is cyclic, to understand the connecting map $K_0(\iota_{i,i+1} \rtimes \id_\Z)$ it suffices to consider the image of a generator for $K_0(A_i)$. 

For each $i \in \N$, let $\bar{\psi}_i:K_0(A_i) \rightarrow \Z[k^{-1}]/m_i\Z[k^{-1}]$ be the isomorphism given by \eqref{eqn:bar-psi}.
Fix $i \in \N$. Let $p \in C(\Z_{n_i}, \K M_{k^\infty})$ be a projection with $\Tr_{k^\infty}(p(0)) = 1$ and $p(x) = 0$ for $x \neq 0$. Then $\bar{\psi}_i(p) = 1$ by \eqref{eqn:bar-psi-property}, so $[p]$ generates $K_0(A_i)$. 
We have $\iota_{i,i+1}(p)(x) = p(0)$ for $x \in \{0,n_i,2n_i, \ldots, (n_{i+1}-n_i)\}$ and is zero otherwise. Hence,
\begin{equation}
\begin{split}
	\bar{\psi}_{i+1}(\iota_{i,i+1}(p)) &= \sum_{j=0}^{n_{i+1}-1} k^j\Tr_{k^\infty}(\iota_{i,i+1}(p)(j)) + m_{i+1}\Z[k^{-1}]\\
        &= (1+k^{n_i}+k^{2n_i} + \cdots + k^{n_{i+1}-n_i}) + m_{i+1}\Z[k^{-1}]\\
        &= \frac{k^{n_{i+1}}-1}{k^{n_{i}}-1} + m_{i+1}\Z[k^{-1}]\\
        &= \frac{m_{i+1}}{m_i} + m_{i+1}\Z[k^{-1}].
\end{split}
\end{equation}
Therefore, $K_0(\iota_{i,i+1} \rtimes \id_\Z) = \eta_{i,i+1}$. Finally, since $e_{11} \otimes 1_{M_{k^\infty}} \otimes 1_{C(\Z_{n_i})}$ has constant trace $1$, it follows from \eqref{eqn:bar-psi-property} that 
\begin{equation}
  \bar{\psi}_{i}([e_{11} \otimes 1_{M_{k^\infty}} \otimes 1_{C(\Z_{n_i})}]) = \sum_{j=0}^{n_{i}-1} k^j + m_i\Z[k^{-1}] = \frac{m_i}{k-1} + m_i\Z[k^{-1}]  
\end{equation}
by summing the geometric series.
\end{proof}

We now deduce Theorem \ref{thm:Diagonal-in-Kirchberg}.
\begin{proof}[Proof of Theorem \ref{thm:Diagonal-in-Kirchberg}]
    Let $B = B_{k,\alpha}$, where $\alpha:X \rightarrow X$ is the odometer corresponding to the sequence $(n_i)_{i=1}^\infty$.
    Then $B$ is a unital UCT Kirchberg algebra by Propositions \ref{prop:Sibbel-Winter}(i), \ref{prop:B-is-UCT} and \ref{prop:B-purely-infinite}.
    By Proposition \ref{prop:Sibbel-Winter}(iii), $\K \otimes B \cong C(X, \K M_{k^\infty}) \rtimes_{\Phi_k \otimes \alpha} \Z$.
    Hence, by Proposition \ref{prop:odometer-crossed-product}, $K_0(B) = \varinjlim \Z_{m_i}$ where $m_i = k^{n_i} -1$, and $K_1(B) = 0$, by Proposition \ref{prop:general-K-theory}.
\end{proof}

Increasing unions of cyclic groups are well understood. These groups can be identified with the dense subgroups of $\mathbb{T}$, where all elements have finite order, by choosing a suitable root of unity at each stage of the inductive limit. Moreover, this class of groups can be classified by considering the possible orders of elements. The following piece of elementary number theory shows that odometers with different supernatural numbers lead to C$^*$-algebras with non-isomorphic K-theory in Theorem \ref{thm:Diagonal-in-Kirchberg}. In particular, Theorem \ref{thm:Diagonal-in-Kirchberg} provides uncountably many examples of UCT Kirchberg algebras with C$^*$-diagonals.

\begin{prp}\label{prop:moonlighting-in-number-theory}
        Let $a,k \in \N$ with $k \geq 2$.  
	Suppose $p^s$ is a prime power such that $p^s | a$. 
        Then there exists a prime power $q^r$ such that $q^r | k^a-1$ and, for all $b \in \N$, $q^r \nmid k^b-1$ whenever $p^s \nmid b$. 
\end{prp}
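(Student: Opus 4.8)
The plan is to exploit the cyclotomic structure of $k^a - 1$ via primitive divisors, or failing that, a direct lifting-the-exponent argument. Recall that for any $b$, the multiplicative order $\mathrm{ord}_{q}(k)$ of $k$ modulo a prime $q$ divides $b$ precisely when $q \mid k^b - 1$. So the condition "$q^r \nmid k^b - 1$ whenever $p^s \nmid b$" will follow if I can locate a prime power $q^r \mid k^a - 1$ such that the order of $k$ modulo $q$ (or a suitable power of $q$) is a multiple of $p^s$. Indeed, if $p^s \mid \mathrm{ord}_{q^r}(k)$ and $q^r \mid k^b - 1$, then $\mathrm{ord}_{q^r}(k) \mid b$, hence $p^s \mid b$; contrapositively, $p^s \nmid b$ forces $q^r \nmid k^b - 1$.

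**Key steps.** First I would reduce to finding a prime $q$ and exponent $r \geq 1$ with $q^r \mid k^a - 1$ and $p^s \mid \mathrm{ord}_{q^r}(k)$. The natural source is the $p$-adic valuation: write $d = \mathrm{ord}_q(k)$ for a prime $q \mid k^a-1$; then $d \mid a$, and for the higher prime powers one uses that $\mathrm{ord}_{q^{j+1}}(k) = q \cdot \mathrm{ord}_{q^j}(k)$ once $j$ is large enough (lifting the exponent). The cleanest approach: let $\ell = a/p^{s}$ (an integer since $p^s \mid a$) — actually set $\ell = a / p^{s-1} \cdot \text{(something)}$; more carefully, consider the factorization $k^a - 1 = (k^{a/p} - 1) \cdot \Phi$, iterated, and use Zsygmondy's theorem to produce a primitive prime divisor $q$ of $k^a - 1$, i.e.\ a prime $q \mid k^a-1$ with $q \nmid k^c - 1$ for all $c < a$. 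For such $q$, $\mathrm{ord}_q(k) = a$, so in particular $p^s \mid \mathrm{ord}_q(k)$, and taking $r = v_q(k^a-1) \geq 1$ works immediately, since $\mathrm{ord}_{q^r}(k)$ is a multiple of $\mathrm{ord}_q(k) = a$, hence of $p^s$. Zsygmondy's theorem has exactly two families of exceptions ($k=2, a=6$, and $a=2$ with $k+1$ a power of $2$), which must be handled separately by an explicit lifting-the-exponent computation (e.g.\ for $a = 2$, use $q = p$ itself if $p$ is odd, via $v_p(k^2-1)$, and treat $p = 2$ by hand).

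**Main obstacle.** The main difficulty is the exceptional cases of Zsygmondy, and more fundamentally whether one wants to invoke Zsygmondy at all — it may be cleaner and more self-contained to argue directly with lifting the exponent. In that approach: pick a prime $q$ dividing $k^{p^s} - 1$ but not $k^{p^{s-1}} - 1$ (such $q$ exists because $\frac{k^{p^s}-1}{k^{p^{s-1}}-1} = \Phi_{p^s}(k^{?})$ grows and is $> 1$, unless small exceptions occur); then $\mathrm{ord}_q(k) \mid p^s$ but $\mathrm{ord}_q(k) \nmid p^{s-1}$, forcing $\mathrm{ord}_q(k) = p^s$ exactly. Then $p^s \mid \mathrm{ord}_{q^r}(k)$ for $r = v_q(k^a - 1) \geq v_q(k^{p^s}-1) \geq 1$. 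The genuinely delicate point is ruling out the degenerate situation where $\Phi_{p^s}$ evaluated at the relevant argument equals $1$ or contributes no new prime — this happens only in tiny cases ($k=2$, $p^s=2$ gives $\frac{3}{1}=3$, fine; $k=2$, $p^s = 6$ is the Zsygmondy exception but $6$ is not a prime power so it does not arise here), and since we only ever evaluate at prime-power exponents $p^s$, I expect the Zsygmondy exceptions to be entirely avoided — the only prime-power exception being $a = 2$, $k + 1 = 2^t$, which is dispatched by noting $v_2(k^2-1) = v_2(k-1) + v_2(k+1) \geq 1 + t \geq 2$ while $v_2(k^b - 1) = v_2(k-1) < v_2(k^2-1)$ for odd $b$, so $q = 2$, $r = v_2(k^2-1)$ works. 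I would organize the final write-up around this dichotomy: $p$ odd (standard LTE on $q = p$ or a primitive divisor) versus $p = 2$ (explicit 2-adic valuation count).
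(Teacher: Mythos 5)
Your argument is correct in substance, but it takes a heavier route than the paper. Both proofs hinge on the same mechanism: produce a modulus $q^r$ dividing $k^a-1$ such that the multiplicative order of $k$ modulo $q^r$ is divisible by $p^s$, and then conclude via $q^r \mid k^b-1 \Rightarrow \mathrm{ord}_{q^r}(k) \mid b \Rightarrow p^s \mid b$. The difference is in how the modulus is found. You look for a \emph{prime} $q$ dividing $k^{p^s}-1$ but not $k^{p^{s-1}}-1$ (equivalently, a primitive prime divisor of $k^{p^s}-1$), which genuinely can fail -- e.g.\ $k=3$, $p^s=2$ -- so you are forced to invoke Zsygmondy or cyclotomic/LTE considerations and to treat the exceptional case $p^s=2$, $k+1$ a power of $2$, by a separate $2$-adic valuation count. (Your fix there is right, though the exception should be indexed by the exponent $p^s=2$ rather than by $a=2$; the choice $q=2$, $r=v_2(k^2-1)$ works for every even $a$ since $k^2-1 \mid k^a-1$. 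Your first variant, Zsygmondy applied to the exponent $a$ itself, additionally meets the $(k,a)=(2,6)$ exception, which you gesture at but do not fully dispatch.) The paper instead asks only for a prime \emph{power} $q^r$ with $q^r \mid k^{p^s}-1$ and $q^r \nmid k^{p^{s-1}}-1$; such a $q^r$ always exists simply because $k^{p^{s-1}}-1$ is a proper divisor of $k^{p^s}-1$, so comparing prime factorisations there must be a prime occurring to strictly higher power in $k^{p^s}-1$. Then $\mathrm{ord}_{q^r}(k)$ divides $p^s$ but not $p^{s-1}$, hence equals $p^s$, and the conclusion follows with no primitive-divisor theorem, no lifting-the-exponent, and no exceptional cases. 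In short: your approach buys contact with standard number-theoretic machinery (and would give the sharper information that $q$ can be taken with $\mathrm{ord}_q(k)=p^s$ outside the exceptions), while the paper's one-word shift from ``prime'' to ``prime power'' makes the whole statement a few elementary lines.
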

\begin{proof}
	Since $p^s | a$, we have $k^{p^s} - 1 | k^{a}-1$.\footnote{This follows from the factorisation $x^{\alpha\beta}-1 = (x^\alpha-1)((x^\alpha)^{\beta-1} + (x^\alpha)^{\beta-2}  + \cdots + (x^\alpha) + 1)$.}
	By prime factorisation, there exists a prime power $q^r$ such that $q^r | k^{p^s} - 1$ but $q^r \nmid k^{p^{s-1}} - 1$. By transitivity of divisibility, $q^r | k^{a}-1$. 
	In the multiplicative group $\left(\Z_{q^r}\right)^\times$, we have $k^{p^s} = 1$ but $k^{p^{s-1}} \neq 1$. Hence, using that $p$ is prime, the order of $k$ in $\left(\Z_{q^r}\right)^\times$ is precisely $p^s$. Let $b \in \N$. Suppose that $q^r | k^{b}-1$. Then in $\left(\Z_{q^r}\right)^\times$, we have $k^b = 1$, so the order of $k$ must divide $b$, so $p^s | b$. The claim follows by taking the contrapositive.
\end{proof}

\begin{cor}
	 Let $k \geq 2$. Let $(n_i)_{i=1}^\infty$, $(n_i')_{i=1}^\infty$ be a strictly increasing sequences in $\N$ with $n_i | n_{i+1}$ and $n_i' | n_{i+1}'$ for all $i \in \N$. 
     Suppose there is a prime power dividing some $n_{i_0}$ that is not a factor of any number in the sequence $(n_i')_{i=1}^\infty$. Then
     \begin{equation}
         \varinjlim \frac{\Z}{(k^{n_i} - 1)\Z} \not\cong \varinjlim \frac{\Z}{(k^{n_i'} - 1)\Z}.
     \end{equation}
\end{cor}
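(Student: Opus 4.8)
The plan is to recognise that each of these inductive limits is a locally cyclic torsion abelian group, so that its isomorphism type is controlled by the set of orders of its elements, and then to use Proposition \ref{prop:moonlighting-in-number-theory} to exhibit a prime power that occurs as an element order on one side but not the other.

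Write $m_i = k^{n_i}-1$ and $m_j' = k^{n_j'}-1$, and take the connecting maps to be the ones from Theorem \ref{thm:Diagonal-in-Kirchberg}, namely $x \mapsto \tfrac{m_{i+1}}{m_i}x$. First I would observe that these maps are injective: since $n_i \mid n_{i+1}$ we have $m_i \mid m_{i+1}$ (by the factorisation recorded in the footnote of Proposition \ref{prop:moonlighting-in-number-theory}), and if $\tfrac{m_{i+1}}{m_i}x \equiv 0 \pmod{m_{i+1}}$ then $m_i \mid x$, i.e.\ $x \equiv 0 \pmod{m_i}$. Hence $G := \varinjlim \Z_{m_i}$ is the honest increasing union $\bigcup_i \Z_{m_i}$, and likewise $G' := \varinjlim \Z_{m_j'} = \bigcup_j \Z_{m_j'}$. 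Because a cyclic group $\Z_N$ contains an element of order $d$ precisely when $d \mid N$, an element of $G$ of order $q^r$ exists if and only if $q^r \mid m_i$ for some $i$, and similarly for $G'$. Since any group isomorphism preserves the order of each element, it therefore suffices to produce a prime power $q^r$ with $q^r \mid m_{i_0}$ for some $i_0$ but $q^r \nmid m_j'$ for all $j$.

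To find such a $q^r$, I would feed the hypothesis into Proposition \ref{prop:moonlighting-in-number-theory}. By assumption there is a prime power $p^s$ with $p^s \mid n_{i_0}$ and $p^s \nmid n_j'$ for every $j$. Applying Proposition \ref{prop:moonlighting-in-number-theory} with $a = n_{i_0}$ gives a prime power $q^r$ such that $q^r \mid k^{n_{i_0}}-1 = m_{i_0}$ and such that $q^r \nmid k^b-1$ whenever $p^s \nmid b$. Specialising to $b = n_j'$ (legitimate since $p^s \nmid n_j'$) yields $q^r \nmid k^{n_j'}-1 = m_j'$ for all $j$, as required.

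Putting this together: $G$ contains an element of order $q^r$ — the image under the injection $\Z_{m_{i_0}} \hookrightarrow G$ of an element of order $q^r$ in $\Z_{m_{i_0}}$, which exists because $q^r \mid m_{i_0}$ — whereas $G'$ contains no such element, so $G \not\cong G'$. The genuinely new input here is Proposition \ref{prop:moonlighting-in-number-theory}, which is already available; the rest is bookkeeping, and the only point that calls for care is checking injectivity of the connecting maps, since this is exactly what licenses reading off the element orders of the limit stage by stage.
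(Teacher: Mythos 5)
Your argument is correct and matches the paper's proof: both apply Proposition \ref{prop:moonlighting-in-number-theory} with $a=n_{i_0}$ to produce a prime power $q^r$ dividing $k^{n_{i_0}}-1$ but no $k^{n_j'}-1$, and then distinguish the two limits by the existence of an element of order $q^r$. The only difference is that you spell out the injectivity of the connecting maps and the resulting description of element orders, which the paper leaves implicit in its remark that increasing unions of cyclic groups are classified by the orders of their elements.
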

\begin{proof}
	By Proposition \ref{prop:moonlighting-in-number-theory}, there exists a prime power $q^r$ such that $q^r | k^{n_{i_0}} - 1$  but $q^r \nmid k^{n_i'} - 1$ for all $i \in \N$.
        It follows that the group $\varinjlim \frac{\Z}{(k^{n_i} - 1)\Z} $ contains an element of order $q^r$ but $\varinjlim \frac{\Z}{(k^{n_i'} - 1)\Z}$ does not. 
\end{proof}

\section{\texorpdfstring{C$^*$}{C*}-diagonals in Cuntz algebras}\label{sec:main-theorem}

In this section, we prove that the tensor product of $B_{k,\alpha}$ and $M_\s$ is isomorphic to the Cuntz algebra $\O_k$ for a suitable choice of integer $k$, odometer action $\alpha$ and supernatural number $\s$. This is achieved using the Kirchberg--Phillips theorem and the K-theory computations of the preceding sections. We then deduce Theorem \ref{thm:Diagonal-in-Ok}.  

\begin{thm}\label{thm:Getting-Ok}
    Let $k \geq 2$ and set $n_i = k^{i-1}$ for all $i \in \N$.  Let $\alpha:X \rightarrow X$ be the odometer corresponding to $(n_i)_{i=1}^\infty$.
    Let $M_\s$ be the UHF algebra with supernatural number $\s = \prod_{p \in E} p^\infty$, where $E$ is the set of all prime numbers not dividing $k-1$.
    Then $B_{k,\alpha} \otimes M_\s \cong \O_{k}$.
\end{thm}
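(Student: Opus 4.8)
The strategy is to verify the hypotheses of the Kirchberg--Phillips theorem (Theorem~\ref{thm:KP}) for the pair $B_{k,\alpha} \otimes M_\s$ and $\O_k$. First I would observe that $B_{k,\alpha} \otimes M_\s$ is a unital Kirchberg algebra satisfying the UCT: it is unital and separable since both factors are; it is nuclear as a tensor product of nuclear C$^*$-algebras; it satisfies the UCT by Proposition~\ref{prop:B-is-UCT} together with the closure of the bootstrap class under tensoring with the AF (hence bootstrap) algebra $M_\s$; and it is simple and purely infinite because $B_{k,\alpha}$ is a Kirchberg algebra by Proposition~\ref{prop:B-purely-infinite} and tensoring a Kirchberg algebra with a unital simple nuclear C$^*$-algebra yields a Kirchberg algebra. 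Since $\O_k$ is also a unital UCT Kirchberg algebra, it remains only to match the pointed $K$-theory.

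Next I would compute $K_*(B_{k,\alpha} \otimes M_\s)$. By the Künneth formula in the form of Proposition~\ref{prop:UHF-Kunneth}, $K_1(B_{k,\alpha} \otimes M_\s) \cong K_1(B_{k,\alpha}) \otimes K_0(M_\s) = 0$ since $K_1(B_{k,\alpha}) = 0$ by Proposition~\ref{prop:odometer-crossed-product}, and $K_0(B_{k,\alpha} \otimes M_\s) \cong K_0(B_{k,\alpha}) \otimes K_0(M_\s)$. Now by Theorem~\ref{thm:Diagonal-in-Kirchberg} (with $n_i = k^{i-1}$, so $m_i = k^{k^{i-1}} - 1$), $K_0(B_{k,\alpha}) = \varinjlim(\Z_{m_i}, \eta_{i,i+1})$. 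Tensoring this inductive limit with the torsion-free group $K_0(M_\s) = \Z[\s^{-1}] := \{a/b : b \mid \s\}$ commutes with the direct limit, so $K_0(B_{k,\alpha} \otimes M_\s) \cong \varinjlim\bigl(\Z_{m_i} \otimes_{\Z} \Z[\s^{-1}], \eta_{i,i+1} \otimes \id\bigr)$. The key number-theoretic point is that $\Z_{m}\otimes_{\Z}\Z[\s^{-1}] \cong \Z_{m'}$, where $m'$ is obtained from $m$ by deleting all prime-power factors whose primes lie in $E$ (equivalently, $m' = \gcd(m, (k-1)^\infty\text{-part})$): indeed $\Z_m \otimes \Z[\s^{-1}] \cong \Z_m / (\text{divisible-by-all-$p\in E$ part}) \cong \Z_{m'}$. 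Since every prime dividing $m' = (k^{n_i}-1)$ that survives must divide $k-1$; and here is where the choice $n_i = k^{i-1}$ is essential. By Fermat/Euler-type reasoning — specifically, for a prime $p$, the largest power of $p$ dividing $k^{n}-1$ when $p \mid k-1$ is governed by the lifting-the-exponent lemma, giving $v_p(k^n - 1) = v_p(k-1) + v_p(n)$, while primes $p \nmid k-1$ contribute nothing after tensoring — one finds $\Z_{m_i} \otimes \Z[\s^{-1}] \cong \Z_{(k-1)\cdot k^{i-1}/\gcd(\ldots)}$; more cleanly, the surviving modulus equals $k-1$ times the part of $k^{i-1}$ built from primes dividing $k-1$. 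Working out the connecting maps $\eta_{i,i+1}\otimes\id$, which are multiplication by $m_{i+1}/m_i$, the inductive limit collapses: I expect $K_0(B_{k,\alpha}\otimes M_\s) \cong \Z_{k-1}$, with the class of the unit $[1]$ mapping to a generator.

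The class of the unit $1_{B_{k,\alpha}\otimes M_\s} = 1_{B_{k,\alpha}} \otimes 1_{M_\s}$ is tracked as follows: by Proposition~\ref{prop:Sibbel-Winter}(ii) and the computation in Proposition~\ref{prop:odometer-crossed-product}, $[1_{B_{k,\alpha}}]$ corresponds in the $i$-th stage to $\tfrac{m_i}{k-1} + m_i\Z$; under the isomorphism $\Z_{m_i}\otimes\Z[\s^{-1}]\cong\Z_{k-1}$ (after the modulus collapses) this maps to a unit of $\Z_{k-1}$, hence to a generator. Then $[1 \otimes 1] = [1_{B_{k,\alpha}}]\otimes[1_{M_\s}]$ where $[1_{M_\s}] = 1 \in \Z[\s^{-1}]$, so $[1_{B_{k,\alpha}\otimes M_\s}]$ is a generator of $\Z_{k-1}$. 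Since by Theorem~\ref{thm:Cuntz} we have $K_0(\O_k) \cong \Z_{k-1}$ with $[1_{\O_k}]$ a generator and $K_1(\O_k) = 0$, there are group isomorphisms $K_0(B_{k,\alpha}\otimes M_\s) \to K_0(\O_k)$ sending $[1]$ to $[1_{\O_k}]$ (any automorphism of $\Z_{k-1}$ sending one generator to another) and $K_1(B_{k,\alpha}\otimes M_\s) = 0 \to K_1(\O_k) = 0$. Theorem~\ref{thm:KP} then yields $B_{k,\alpha}\otimes M_\s \cong \O_k$.

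The main obstacle is the number-theoretic identification of $\varinjlim(\Z_{m_i}\otimes\Z[\s^{-1}], \eta_{i,i+1}\otimes\id)$ with $\Z_{k-1}$: one must check both that tensoring with $\Z[\s^{-1}]$ kills exactly the primes not dividing $k-1$, and that the choice $n_i = k^{i-1}$ makes the surviving $(k-1)$-parts of $k^{n_i}-1$ stabilise appropriately under the connecting maps so that the limit is finite cyclic of order $k-1$ rather than something larger. The factorisation $k^{ab}-1 = (k^a-1)(\cdots)$ and the lifting-the-exponent lemma control the $p$-adic valuations $v_p(k^{n_i}-1)$ for $p \mid k-1$, and the specific growth rate $n_i = k^{i-1}$ is tuned so that these valuations grow no faster than the divisibility imposed by $\s$ allows, forcing the limit to be $\Z_{k-1}$. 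I would isolate this as a short lemma before invoking Kirchberg--Phillips.
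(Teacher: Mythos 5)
Your proposal is correct and follows essentially the same route as the paper: show $B_{k,\alpha}\otimes M_\s$ is a unital UCT Kirchberg algebra, compute its K-theory by combining the K\"unneth formula with the inductive limit description of $K_0(B_{k,\alpha})$, identify each stage $\Z_{m_i}\otimes K_0(M_\s)$ with $\Z_{k-1}$, track the class of the unit, and invoke Kirchberg--Phillips. The only real divergence is in the number-theoretic lemma you deferred. The paper avoids lifting-the-exponent altogether: for every prime $p\mid k-1$ one has $m_i/(k-1)=1+k+\cdots+k^{n_i-1}\equiv n_i\equiv 1\pmod{p}$ (using $n_i=k^{i-1}$ and $\gcd(k,k-1)=1$), so every prime factor of $m_i/(k-1)$ divides $\s$ with infinite multiplicity, $K_0(M_\s)$ is $m_i/(k-1)$-divisible, and $\Z_{m_i}\otimes K_0(M_\s)\cong K_0(M_\s)/m_iK_0(M_\s)\cong K_0(M_\s)/(k-1)K_0(M_\s)\cong\Z_{k-1}$ by Proposition \ref{prop:K0-UHF-quotient}; the connecting maps are then handled by flatness of the torsion-free group $K_0(M_\s)$ (so each $\eta_{i,i+1}\otimes\id$ remains injective) together with the pigeonhole principle for injective maps between finite groups of equal order. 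Your LTE route gives the same valuation statement $v_p(m_i)=v_p(k-1)$ for $p\mid k-1$, with the caveat that the formula $v_p(k^n-1)=v_p(k-1)+v_p(n)$ as you state it requires the usual correction at $p=2$; this is harmless here since $n_i=k^{i-1}$ is odd whenever $2\mid k-1$, but it should be noted. The one step you left genuinely unargued ("the inductive limit collapses") is that the maps $\eta_{i,i+1}\otimes\id$ become isomorphisms on $\Z_{k-1}$; this does need a sentence, either via the paper's flatness-plus-pigeonhole argument or by observing directly that $m_{i+1}/m_i=1+k^{n_i}+\cdots+k^{(k-1)n_i}\equiv k\equiv 1\pmod{k-1}$, so the induced map is multiplication by $1$.
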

\begin{proof}
    By Propositions \ref{prop:Sibbel-Winter}(i), \ref{prop:B-is-UCT} and  \ref{prop:B-purely-infinite}, we see that $B_{k,\alpha}$ is a unital UCT Kirchberg algebra.
    The same is true for the tensor product $B_{k,\alpha} \otimes M_\s$.
    Indeed, the minimal tensor product preserves unitality, separability and nuclearity.
    Moreover, the bootstrap class is also closed under minimal tensor products (see for example \cite[Section 22.3.5(f)]{Bla86}). Finally, 
    the tensor product $B_{k,\alpha} \otimes M_\s$ is simple and purely infinite by \cite[Theorem 4.1.10]{rordam2002classification}.
    
    By the Kirchberg--Phillips theorem (see Theorem \ref{thm:KP}) and Cuntz's computation of the K-theory of $\O_k$ (see Theorem \ref{thm:Cuntz}), it therefore suffices to prove that $K_0(B_{k,\alpha} \otimes M_\s) \cong \Z_{k-1}$, with the $K_0$-class $[1_{B_{k,\alpha}} \otimes 1_{M_\s}]$ mapping to $1\in\Z_{k-1}$, and $K_1(B_{k,\alpha} \otimes M_\s) = 0$.

    By Proposition \ref{prop:Sibbel-Winter}(iii), $B_{k,\alpha}$ is stably isomorphic to $C(X, \K M_{k^\infty}) \rtimes_{\Phi_k \otimes \alpha} \Z$. Hence, by the K-theory computation of Proposition \ref{prop:odometer-crossed-product}, we see that $K_1(B_{k,\alpha}) = 0$ and $K_0(B_{k,\alpha})$ is given by the inductive limit \eqref{eqn:inductive-limit-UHF}. By Proposition \ref{prop:UHF-Kunneth}, we have $K_0(B_{k,\alpha} \otimes M_\s) \cong K_0(B_{k,\alpha}) \otimes K_0(M_\s)$ and $K_1(B_{k,\alpha} \otimes M_\s) = 0$. Hence, $K_0(B_{k,\alpha} \otimes M_\s)$ is given by the inductive limit
    \begin{equation} \label{eqn:inductive-limit-UHF-tensored}
	\Z_{m_1} \otimes  K_0(M_\s) \xrightarrow{\eta_{1,2} \otimes \id} \Z_{m_2} \otimes  K_0(M_\s) \xrightarrow{\eta_{2,3} \otimes \id} \Z_{m_3} \otimes  K_0(M_\s) \xrightarrow{\eta_{3,4} \otimes \id} \cdots,
    \end{equation}
    where $m_i = k^{n_i}-1$ and $\eta_{i,i+1}(x) = \frac{m_{i+1}}{m_i}x$; see for example \cite[Theorem 5.27]{Rotman09}. Since $K_0(M_\s)$ is a torsion free group, it is a flat $\Z$-module; see for example \cite[Corollary 3.50]{Rotman09}. By Proposition \ref{prop:odometer-crossed-product}, each map $\eta_{i,i+1}$ is injective. As $K_0(M_\s)$ is a flat $\Z$-module, each connecting map $\eta_{i,i+1} \otimes \id$ of \eqref{eqn:inductive-limit-UHF-tensored} is also injective.

    By Proposition \ref{prop:Sibbel-Winter}(ii), the unit $1_{B_{k,\alpha}} \in B_{k,\alpha}$ corresponds to the projection $e_{11} \otimes 1_{M_{k^\infty}} \otimes 1_{C(X)}$ in the crossed product $C(X, \K M_{k^\infty}) \rtimes_{\Phi_k \otimes \alpha} \Z$.
    By Proposition~\ref{prop:odometer-crossed-product}, this corresponds to the elements $\frac{m_i}{k-1}+m\Z_{m_i}$ in the successive stages of the inductive limit \eqref{eqn:inductive-limit-UHF}.
    Since $n_1 = k^0 = 1$, we have that $m_1 = k-1$ and $[1_{B_{k,\alpha}}]$ maps to $1 \in \Z_{k-1}$ at the first stage of the inductive limit \eqref{eqn:inductive-limit-UHF}.
    Hence, $[1_{B_{k,\alpha}} \otimes 1_{M_\s}]$ maps to the element $1 \otimes [1_{M_\s}]$ in $\Z_{k-1} \otimes  K_0(M_\s)$ at the first stage of the inductive limit \eqref{eqn:inductive-limit-UHF-tensored}  by Proposition \ref{prop:UHF-Kunneth}.
    
    We claim that our choice of $\s$ ensures that $\Z_{m_{i}} \otimes  K_0(M_\s) \cong \Z_{k-1}$ for all $i \in \N$.
    Fix $i \in \N$. Noting that $k-1$ divides $m_i$, set $m_i' = m_i / (k-1)$ for each $i \in \N$. 
    Let $p$ be a prime factor of $k-1$. Then, since $k \equiv 1$ (mod $p$) and $n_i = k^{i-1}$, we have
    \begin{equation}
        m_i' = \frac{k^{n_{i}}-1}{k-1} = 1 + k + k^2 + \cdots + k^{n_i-1} \equiv n_i \equiv 1 \, \mbox{(mod p)}.\\
    \end{equation}
    So, by our definition of $\s$, every prime factor of $m_i'$ must divide $\s$ with infinite multiplicity. Hence, $K_0(M_\s)$ is $m_i'$-divisible. 
    Therefore, we have 
    \begin{equation}
        \Z_{m_i} \otimes  K_0(M_\s) \cong \frac{K_0(M_\s)}{m_iK_0(M_\s)} \cong \frac{K_0(M_\s)}{(k-1)K_0(M_\s)} \cong \Z_{k-1},
    \end{equation}
    where the first isomorphism holds for all abelian groups, the second follows as $K_0(M_\s)$ is $m_i'$-divisible, and the third follows by Proposition \ref{prop:K0-UHF-quotient}. 

    Hence, the inductive limit \eqref{eqn:inductive-limit-UHF-tensored} is a stationary inductive limit with each term isomorphic to $\Z_{k-1}$. Since the connecting maps are injective, they are isomorphisms by the pigeonhole principle. Hence  $K_0(B_{k,\alpha} \otimes M_\s) \cong \Z_{k-1}$. Since $[1_{B_{k,\alpha}} \otimes 1_{M_\s}]$ maps to a generator at the first stage of the inductive limit, we may choose the isomorphism $K_0(B_{k,\alpha} \otimes M_\s) \cong \Z_{k-1}$ to send $[1_{B_{k,\alpha}} \otimes 1_{M_\s}]$ to $1 \in \Z_{k-1}$. The result now follows by the Kirchberg--Phillips theorem. 
\end{proof}

We can now complete the proof of Theorem \ref{thm:Diagonal-in-Ok}.

\begin{proof}[Proof of Theorem \ref{thm:Diagonal-in-Ok}]
    Let $k \in \N$ with $k \geq 2$. By Theorem \ref{thm:Getting-Ok}, there exists an odometer action $\alpha$ and a supernatural number $\s$ such that
    $B_{k,\alpha} \otimes M_{\s} \cong \O_k$. By Proposition \ref{prop:Diagonal-in-B},  $D_{k,\alpha}$ is a C$^*$-diagonal in $B_{k,\alpha}$ with Cantor spectrum. Let $D_\s$ be the canonical C$^*$-diagonal in $M_\s$. Then $D_{k,\alpha} \otimes D_\s$ is a C$^*$-diagonal in $B_{k,\alpha} \otimes M_{\s}$ by \cite[Lemma 5.1]{barlak2017cartan}. Since both $D_{k,\alpha}$ and $D_\s$ have Cantor spectrum, so does $D_{k,\alpha} \otimes D_\s$. Since $B_{k,\alpha} \otimes M_{\s} \cong \O_k$, the result follows.
\end{proof}   

\begin{rmk}
    It is natural to ask if our methods can be extended to cover $\O_\infty$. Indeed, Cuntz gave a crossed product description for $\K \otimes \O_\infty$ (\cite{cuntz1977simple}). Instead of a stabilised UHF algebra and a trace scaling automorphism, one now has a non-simple stable AF algebra $B$ with $K_0(B) \cong \bigoplus_{n \in \Z} \Z$ and the automorphism $\Phi$ acts on K-theory by a bilateral shift. Computations analogous to those in Section \ref{sec:K-theory} show that $K_0(C(X,B) \rtimes_{\Phi \otimes \alpha} \Z) = C(X, \Z)$ for any odometer action $\alpha$ on the Cantor space $X$. However, we are not aware of any tricks to obtain a Kirchberg algebra with the same K-theory as $\O_\infty$ from this.
\end{rmk}

\section{Groupoid models}\label{sec:groupoid-model}
In this section, we construct groupoid models for the C$^*$-algebras $B_{k,\alpha}$ defined in Section \ref{section:Construction} and deduce Theorem \ref{thm:Groupoid-model-for-Ok}. 
The main tool is Katsura's theory of topological graphs and their associated C$^*$-algebras (\cite{katsura2004class,katsura2006class3,katsura2008class}).
We would like to thank Ralf Meyer for drawing our attention to these results. 
We begin by briefly recalling the main definitions and constructions.

\begin{dfn}(\cite[Definition 2.1]{katsura2004class})\label{def:top-graph}
    A \emph{topological graph} $E= (E^0, E^1,d,r)$ consists of two locally compact spaces $E^0$ and $E^1$, and two maps $d,r: E^1 \to E^0$, where $d$ is locally homeomorphic and $r$ is continuous. 
\end{dfn}

Let $E = (E^0, E^1,d,r)$ be a topological graph. For simplicity, we assume $E^0$ and $E^1$ to be compact Hausdorff spaces and $r$ to be surjective. Specialising Katsura's construction from \cite{katsura2008class} to the compact setting, we equip $C(E^1)$ with the structure of a C$^*$-correspondence over $C(E^0)$ as follows: we define a $C(E^0)$-valued inner product on $C(E^1)$ by 
\begin{equation}
    \langle \xi, \eta \rangle (x) = \sum_{e \in d^{-1}(x)} \overline{\xi(e)} \eta(e)
\end{equation}
for $\xi, \eta \in C(E^1)$ and $x \in E^0$, noting that the sum is finite as $d$ is a local homeomorphism and $E^1$ is compact, and we define left and right actions of $C(E^0)$ by $(f \xi g) (e) = f(r(e)) \xi(e) g(d(e))$ for $f,g \in C(E^0), \xi \in C(E^1)$ and $e \in E^1$. 

The left action can be viewed as a $^*$-homomorphism $\pi_r: C(E^0) \rightarrow \mathcal{K}(C(E^1))$ into the C$^*$-algebra of compact operators on the right Hilbert $C(E^0)$-module $C(E^1)$, i.e.\  the closed span of the rank one adjointable operators $\theta_{\xi,\eta} \in \mathcal{L}(C(E^1))$ for $\xi,\eta \in C(E^1)$ defined by $\theta_{\xi,\eta} (\zeta) = \xi \langle \eta, \zeta \rangle$. 

\begin{dfn}\label{dfn:TopologicalGraphAlgebra} (\cite[Definition 1.3]{katsura2008class})
For a topological graph $E= (E^0, E^1,d,r)$ with $E^0$ and $E^1$ compact and $r$ surjective, the C$^*$-algebra $\mathcal{O}(E)$ is the universal C$^*$-algebra generated by the images of a $^*$-homomorphism $t^0:C(E^0) \to \mathcal{O}(E)$ and a linear map $t^1:C(E^1) \to \mathcal{O}(E)$ satisfying 
\begin{enumerate}
    \item[(i)]  $t^1(\xi)^*t^1(\eta) = t^0(\langle \xi, \eta \rangle) $ for $\xi, \eta \in C(E^1)$,
    \item[(ii)] $t^0(f)t^1(\xi) = t^1 (\pi_r(f)\xi)$ for $f \in C(E^0)$ and $\xi \in C(E^1)$,
    \item[(iii)] $t^0(f) = \varphi (\pi_r(f))$ for $f \in C(E^0)$,
\end{enumerate}
where $\varphi: \K(C(E^1)) \to \mathcal{O}(E)$ is the $^*$-homomorphism given by $\varphi(\theta_{\xi,\eta}) = t^1(\xi) t^1(\eta)^*$ for $\xi, \eta \in C(E^1)$.
\end{dfn}

We now show that the C$^*$-algebra $B_{k,\alpha}$, defined in \eqref{eqn:def-B-k-alpha}, is the C$^*$-algebra of a topological graph.

\begin{prp}\label{prop:Iso-to-graph-algebra}
    Let $k \geq 2$ and let $\alpha:X \rightarrow X$ be a minimal homeomorphism of the Cantor space.
    Then $B_{k,\alpha}$ is isomorphic to the C$^*$-algebra of the topological graph $E = (E^0, E^1, d,r)$ given by $E^0 = X$ and $E^1 = X \times \{1,\dots, k\}$
    with $d(x,i) = x$ and $r(x,i) = \alpha(x)$ for $(x,i) \in E^1$.
\end{prp}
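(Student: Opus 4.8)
The plan is to build a Cuntz--Krieger $E$-pair $(t^0,t^1)$ inside $B_{k,\alpha}$, check that it is admissible, and then invoke Katsura's gauge-invariant uniqueness theorem (\cite{katsura2004class}) to conclude that the induced $^*$-homomorphism $\mathcal{O}(E) \to B_{k,\alpha}$ is an isomorphism. First I would make the correspondence explicit. Identifying $C(E^1) = C(X \times \{1,\dots,k\})$ with $C(X)^k$ via $\xi \leftrightarrow (\xi_1,\dots,\xi_k)$, $\xi_i = \xi(\,\cdot\,,i)$, the $C(E^0) = C(X)$-valued inner product becomes $\langle\xi,\eta\rangle = \sum_{i=1}^k\overline{\xi_i}\eta_i$, the right action is coordinatewise multiplication, and, since $r(x,i)=\alpha(x)$, the left action $\pi_r(f)$ is coordinatewise multiplication by $f\circ\alpha = \alpha^{-1}(f)$. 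In particular this module is free of rank $k$, so $\K(C(E^1)) = M_k(C(X)) = \mathcal{L}(C(E^1))$ and $\pi_r(C(E^0)) \subseteq \K(C(E^1))$ automatically; moreover $d$ and $r$ are $k$-fold covering maps and $r$ is surjective, so $E$ is a regular topological graph with $E^0_{\mathrm{rg}} = E^0$. Hence $\mathcal{O}(E)$ is the Cuntz--Pimsner algebra of the correspondence $C(E^1)$, with no exotic ideal corrections, and the gauge-invariant uniqueness theorem applies in its cleanest form. I would also record the ``matrix units'' $\delta_i\in C(E^1)$ with $\delta_i(\,\cdot\,,j)=\delta_{ij}$, which satisfy $\langle\delta_i,\delta_j\rangle = \delta_{ij}1$, $\sum_i\theta_{\delta_i,\delta_i}=\id$, and $\pi_r(f) = \sum_i\theta_{(f\circ\alpha)\delta_i,\delta_i}$.

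Next I would define $t^0\colon C(X)\to B_{k,\alpha}$ by $t^0(f) = 1_{\O_k}\otimes f$ (this lies in $D_{k,\alpha}$ since $1_{\O_k}\in D_{k^\infty}$) and $t^1\colon C(E^1)\to B_{k,\alpha}$ by $t^1(\xi) = \sum_{i=1}^k S_i\otimes v\xi_i$. The three relations of Definition \ref{dfn:TopologicalGraphAlgebra} are then routine: relation (i) follows from $S_i^*S_j = \delta_{ij}1_{\O_k}$ and $v^*v=1$, giving $t^1(\xi)^*t^1(\eta) = 1_{\O_k}\otimes\sum_i\overline{\xi_i}\eta_i = t^0(\langle\xi,\eta\rangle)$; relation (ii) follows from the identity $fv = v\,\alpha^{-1}(f)$ in $C(X)\rtimes_\alpha\Z$, which turns $t^0(f)t^1(\xi) = \sum_i S_i\otimes fv\xi_i$ into $\sum_i S_i\otimes v\,\alpha^{-1}(f)\xi_i = t^1(\pi_r(f)\xi)$; and relation (iii) follows by computing $\varphi(\pi_r(f)) = \sum_i t^1((f\circ\alpha)\delta_i)\,t^1(\delta_i)^* = \sum_i S_iS_i^*\otimes v(f\circ\alpha)v^* = \sum_i S_iS_i^*\otimes f = 1_{\O_k}\otimes f = t^0(f)$, using $v(f\circ\alpha)v^* = \alpha(f\circ\alpha) = f$ and $\sum_i S_iS_i^* = 1_{\O_k}$. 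By the universal property of $\mathcal{O}(E)$ this yields a $^*$-homomorphism $\Psi\colon\mathcal{O}(E)\to B_{k,\alpha}$ with $\Psi\circ t^0 = t^0$ and $\Psi\circ t^1 = t^1$.

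For surjectivity, the image of $\Psi$ is a C$^*$-algebra containing $1_{\O_k}\otimes C(X) = t^0(C(X))$, the generators $S_i\otimes v = t^1(\delta_i)$, and the projections $S_iS_i^*\otimes 1_{C(X)} = \varphi(\theta_{\delta_i,\delta_i})$; conjugating the latter repeatedly by the $S_j\otimes v$ produces all $S_\mu S_\mu^*\otimes 1_{C(X)}$, so the image contains $D_{k^\infty}\otimes 1_{C(X)}$, hence (these commuting with $1_{\O_k}\otimes C(X)$) all of $D_{k^\infty}\otimes C(X) = D_{k,\alpha}$ and therefore all the generators of $B_{k,\alpha}$ from \eqref{eqn:def-B-k-alpha}. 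For injectivity I would apply the gauge-invariant uniqueness theorem: $t^0$ is injective since $f\mapsto 1_{\O_k}\otimes f$ is, and the standard gauge action $\lambda\colon\mathbb{T}\curvearrowright\O_k$ with $\lambda_z(S_i)=zS_i$ fixes $D_{k^\infty}$ pointwise, so $\beta_z := \lambda_z\otimes\id_{C(X)\rtimes_\alpha\Z}$ restricts to a point-norm continuous $\mathbb{T}$-action on $B_{k,\alpha}$ with $\beta_z(t^0(f)) = t^0(f)$ and $\beta_z(t^1(\xi)) = z\,t^1(\xi)$. Hence $\Psi$ is injective, and thus an isomorphism. The steps I expect to need the most care are the bookkeeping around regularity of $E$ (confirming $E^0_{\mathrm{rg}} = E^0$, so that $\mathcal{O}(E)$ really is the Cuntz--Pimsner algebra and the uniqueness theorem appears without extra ideal hypotheses) and pinning down the direction of the twist in the left action, namely that $\pi_r(f)$ is multiplication by $f\circ\alpha$ rather than $f\circ\alpha^{-1}$ — this is precisely what makes relation (ii) match the identity $fv = v\,\alpha^{-1}(f)$.
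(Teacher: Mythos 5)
Your construction of the Cuntz--Krieger $E$-pair is exactly the paper's: the same identification $C(E^1)\cong C(X)^{\oplus k}$, the same maps $t^0(f)=1_{\O_k}\otimes f$ and $t^1(\xi)=\sum_i S_i\otimes v\xi_i$, and the same verification of relations (i)--(iii), including the correct orientation of the left action (multiplication by $f\circ\alpha$, matching $fv=v(f\circ\alpha)$). Where you genuinely diverge is the last step. The paper gets injectivity of the induced surjection $\mathcal{O}(E)\to B_{k,\alpha}$ from simplicity of $\mathcal{O}(E)$, invoking Katsura's ideal-structure result \cite[Corollary 8.13]{katsura2006class3} together with minimality of the topological graph, which follows from minimality of $\alpha$; surjectivity is only asserted via the observation that the images of $t^0$ and $t^1$ generate $B_{k,\alpha}$. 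You instead spell out surjectivity explicitly (recovering $S_\mu S_\mu^*\otimes 1$ by conjugation and hence all of $D_{k,\alpha}$) and prove injectivity by the gauge-invariant uniqueness theorem of \cite{katsura2004class}, using that $t^0$ is injective and that $\lambda_z\otimes\id$ restricts to a gauge action on $B_{k,\alpha}$ fixing $t^0(C(X))$ and scaling $t^1$; your preliminary check that $r$ is a proper surjective local homeomorphism, so that every vertex is regular and condition (iii) is the right covariance requirement, is the correct bookkeeping for this. Both routes are valid. Yours has the advantage of not using minimality of $\alpha$ at all in this proposition (it would prove $B_{k,\alpha}\cong\mathcal{O}(E)$ for an arbitrary homeomorphism $\alpha$, with simplicity handled elsewhere), and it stays within the machinery of \cite{katsura2004class}; the paper's route is shorter on the page because simplicity immediately upgrades the canonical surjection to an isomorphism, at the cost of quoting the deeper ideal-structure theory of \cite{katsura2006class3} and of being tied to the minimal case.
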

\begin{proof}
    First, we note that $E$ satisfies Definition \ref{def:top-graph}. Next, we observe that $C(E^1) \cong C(X)^{\oplus k}$ as Hilbert $C(X)$-modules via the map $\xi \mapsto (\xi_1,...,\xi_k)$ where $\xi_i(x) = \xi(x,i)$ for all $(x,i) \in E^1$, and we treat this map as an identification.
    Under this identification, the left action is 
    $\pi_r(f) \xi = ((f\circ \alpha )\xi_1,\dots,(f\circ \alpha) \xi_k)$ for $f \in C(E^0)$ and $\xi = (\xi_1,\dots, \xi_k) \in C(E^1)$.

    Define $t^0 : C(E^0) \to B_{k,\alpha}$ and $t^1 : C(E^1) \to B_{k,\alpha}$ by $t^0(f) = 1_{\O_k} \otimes f$ and $t^1(\xi) = \sum_{i=1}^k S_i \otimes v\xi_i$
    for $f \in C(E^0)$ and $\xi = (\xi_1,\dots, \xi_k) \in C(E^1)$. Then, $t^0$ is a $^*$-homomorphism, $t^1$ is linear and the images of $t^0$ and $t^1$ generate $B_{k,\alpha}$. We check that conditions (i), (ii), and (iii) of Definition \ref{dfn:TopologicalGraphAlgebra} hold. 

    Let $\xi=(\xi_1,...,\xi_k),\eta=(\eta_1,...,\eta_k) \in C(E^1)$, then
\begin{equation}
        t^1(\xi)^*t^1(\eta) = \sum_{i = 1}^k\sum_{j = 1}^k S_i^*S_j \otimes \bar{\xi}_i\eta_j
        = \sum_{i = 1}^k 1_{\O_k} \otimes \bar{\xi}_i\eta_i
        = t^0(\langle \xi, \eta \rangle),
\end{equation}
    where we used that $S_i^*S_j = \delta_{ij}1_{\O_k}$. Hence, (i) holds.
    
   Let $f \in C(E^0)$ and $\xi=(\xi_1,...,\xi_k) \in C(E^1)$. Then 
\begin{equation}
         t^0(f)t^1(\xi) =  \sum_{i=1}^k S_i\otimes fv\xi_i = \sum_{i=1}^k S_i\otimes v(f\circ \alpha)\xi_1 = t^1 (\pi_r(f)\xi) 
\end{equation}
    using that $fv = vv^* fv = v (f\circ \alpha)$. Hence, (ii) holds.

    Let $f \in C(E^0)$. In order to show that (iii) is satisfied, we first note that
    \begin{equation}\label{eqn:LeftAction}
        \pi_r(f) = \theta_{f^1,e^1} +\dots + \theta_{f^k,e^k},
    \end{equation}
    where $e^i = (0,\dots, 0,1_{C(X)},0, \dots , 0)$ is the $i$-th standard basis element and $f^i = (0,\dots, 0,(f\circ \alpha),0, \dots , 0) = f e^i$. 
    Indeed, for any $\zeta=(\zeta_1,...,\zeta_k) \in C(E^1)$, we have 
    \begin{equation}
             \pi_r(f)(\xi) = ((f\circ \alpha) \zeta_1, \dots, (f\circ \alpha) \zeta_k) = \sum_{i=1}^k f^i \langle e^i, \zeta \rangle
    \end{equation}
    and \eqref{eqn:LeftAction} now follows.
    Using \eqref{eqn:LeftAction}, we see that
    \begin{equation}
       \varphi (\pi_r(f))= \sum_{i=1}^k \varphi(\theta_{f^i,e^i}) = \sum_{i=1}^k t^1(f^i) t^1(e^i)^*.
    \end{equation}
    By the definition of $t^1$, we obtain   
\begin{equation}
 t^1(f^i) t^1(e^i)^* = (S_i\otimes v(f\circ \alpha))  (S_i^* \otimes v^*) =  S_iS_i^*\otimes v(f\circ \alpha)v^*) = S_iS_i^*\otimes f
\end{equation}
for each $i\in\{1,\ldots,k\}$. Summing over $i$, we get
\begin{equation}
       \varphi (\pi_r(f)) = \sum_{i=1}^k t^1(f^i) t^1(e^i)^* 
       = \sum_{i=1}^k S_iS_i^*\otimes f 
       = 1_{\mathcal{O}_k} \otimes f 
       = t^0(f).
\end{equation}
    Hence, (iii) holds.    
    
    Therefore, (i), (ii) and (iii) from Definition \ref{dfn:TopologicalGraphAlgebra} are satisfied. By universality, there exists a surjective $^*$-homomorphism $\mathcal{O}(E) \to B_{k,\alpha}.$ 
    It remains to show that $\mathcal{O}(E)$ is simple, which then yields that the $^*$-homomorphism is in fact an isomorphism.
    By \cite[Corollary 8.13]{katsura2006class3}, the C$^*$-algebra of a topological graph with non-discrete set of vertices $E^0$ is simple if and only if the topological graph is minimal.
    Recall that a topological graph $E$ is called minimal if there exist no closed invariant sets other than $\emptyset$ or $E^0$ (see \cite[Definition 8.8]{katsura2006class3}). And since $\alpha$ is a minimal homeomorphism, this holds and completes the proof.
\end{proof}

We now deduce that $B_{k,\alpha}$ is a groupoid C$^*$-algebra using general results for topological graphs. The groupoids that arise here are Deaconu--Renault groupoids (see  \cite{deaconu1995groupoids, renault2000cuntz}). 

\begin{cor}\label{cor:Groupoid-model-for-B}
    Let $k \geq 2$ and let $\alpha:X \rightarrow X$ be a minimal homeomorphism of the Cantor space.
    There exists a second countable, principal, étale groupoid $\mathcal{G}_{k,\alpha}$ whose unit space is a Cantor space such that $C_r^*(\mathcal{G}_{k,\alpha}) \cong B_{k,\alpha}$.
\end{cor}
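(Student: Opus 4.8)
The plan is to combine Proposition~\ref{prop:Iso-to-graph-algebra} with the general structure theory for C$^*$-algebras of topological graphs, which realises them as reduced C$^*$-algebras of Deaconu--Renault groupoids. First I would recall that, by work of Katsura and of Yeend (see \cite{katsura2004class, Yeend07, renault2000cuntz}), the C$^*$-algebra $\mathcal{O}(E)$ of a topological graph $E$ with no sinks and no sources (which is our situation: $d$ is a surjective local homeomorphism and $r$ is a surjective local homeomorphism here, since $E^1 = X \times \{1,\dots,k\}$ with $d(x,i)=x$ and $r(x,i)=\alpha(x)$) is canonically isomorphic to the reduced C$^*$-algebra of the Deaconu--Renault groupoid
\begin{equation}
    \mathcal{G}_{k,\alpha} = \{(x,m-n,y) : m,n \in \N, \ \sigma^m(x) = \sigma^n(y)\},
\end{equation}
where $\sigma$ is the (partially defined) shift associated to $E$ on the appropriate path space, with unit space the infinite path space $E^\infty$. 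So the isomorphism $\mathcal{O}(E) \cong B_{k,\alpha}$ of Proposition~\ref{prop:Iso-to-graph-algebra} immediately gives $C_r^*(\mathcal{G}_{k,\alpha}) \cong B_{k,\alpha}$.

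It then remains to verify the three adjectives. For \emph{second countable}: $X$ is metrisable and $\{1,\dots,k\}$ finite, so $E^0, E^1$ and hence the path space $E^\infty$ are second countable, and $\mathcal{G}_{k,\alpha}$ is a subspace of $E^\infty \times \Z \times E^\infty$. For \emph{étale}: this is a standard feature of Deaconu--Renault groupoids built from local homeomorphisms. For the \emph{unit space being a Cantor space}: the infinite path space $E^\infty$ consists of sequences $((x_j,i_j))_{j\ge 0}$ with $x_{j+1} = \alpha(x_j)$, hence is determined by $x_0 \in X$ together with the sequence $(i_j) \in \{1,\dots,k\}^\N$; thus $E^\infty \cong X \times \{1,\dots,k\}^\N$, a product of two Cantor spaces, hence a Cantor space. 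For \emph{principal}: this is the crux and the place I expect the real work. One route is to invoke the fact that $B_{k,\alpha}$ carries the C$^*$-diagonal $D_{k,\alpha}$ (Proposition~\ref{prop:Diagonal-in-B}), so the associated groupoid model must be principal by Renault's theorem; but to conclude that \emph{this particular} groupoid $\mathcal{G}_{k,\alpha}$ is the one attached to $D_{k,\alpha}$ one must check that $C_0(E^\infty)$ is carried onto $D_{k,\alpha}$ under the isomorphism, which requires unwinding the identification in Proposition~\ref{prop:Iso-to-graph-algebra}. A cleaner route is to argue directly: a point of $E^\infty$ has nontrivial isotropy in $\mathcal{G}_{k,\alpha}$ precisely when the associated path is eventually periodic under the shift; but the shift on $E^\infty \cong X \times \{1,\dots,k\}^\N$ acts on the first coordinate by the minimal (hence aperiodic) homeomorphism $\alpha$, so no path is eventually periodic, and every isotropy group is trivial. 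Hence $\mathcal{G}_{k,\alpha}$ is principal.

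I would therefore organise the proof as: (1) cite the Katsura/Deaconu--Renault identification $\mathcal{O}(E) \cong C_r^*(\mathcal{G}_{k,\alpha})$ and combine with Proposition~\ref{prop:Iso-to-graph-algebra}; (2) identify the unit space $E^\infty \cong X \times \{1,\dots,k\}^\N$ and note it is a Cantor space; (3) observe second countability and the étale property; (4) prove principality by showing the shift map has no eventually periodic points, using minimality of $\alpha$. The main obstacle is step (4) — making sure the isotropy analysis is correct, i.e.\ that the only way to get an element $(x,0,x) \in \mathcal{G}_{k,\alpha}$ with a nontrivial ``lag'' is via eventual periodicity of the path, and that minimality of $\alpha$ genuinely rules this out (a minimal homeomorphism of an infinite compact space has no periodic points, so the first-coordinate dynamics already forbids it). If one prefers to avoid reproving standard topological-graph facts, steps (1)--(3) can be compressed into citations, leaving (4) as the only substantive verification.
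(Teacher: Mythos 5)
Your proposal is correct and follows essentially the same route as the paper: identify $B_{k,\alpha}\cong\mathcal{O}(E)$ via Proposition~\ref{prop:Iso-to-graph-algebra}, invoke the Deaconu--Renault groupoid realisation of the topological graph algebra, identify the unit space $E^\infty\cong X\times\{1,\dots,k\}^\N$, and deduce principality from the absence of (eventually) periodic points, which the minimality of $\alpha$ forces on the first coordinate. The only cosmetic difference is the citation (the paper uses Katsura's Cuntz--Krieger/topological-graph paper, Theorem 4 of \cite{katsura2009cuntz}, rather than Yeend), and the paper argues principality directly via $(\alpha\times\rho)^n(z)\neq(\alpha\times\rho)^m(z)$ for $n\neq m$, exactly your step (4).
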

\begin{proof}
    Let $E=(E^0,E^1,d,r)$ be the topological graph from Proposition \ref{prop:Iso-to-graph-algebra} with $B_{k,\alpha} \cong \mathcal{O}(E)$.
    By \cite[Theorem 4]{katsura2009cuntz}, $\mathcal{O}(E)$ is in turn isomorphic to the C$^*$-algebra of the Deaconu--Renault groupoid 
    \begin{equation}
    \mathcal{G}_{k,\alpha}=\{(x,m-n,y) \mid x,y \in E^\infty, m,n \in \N \text{ and }\sigma^m(x) = \sigma^n(y)\},
    \end{equation}
    where $E^\infty = \{ (e_i)_{i \in \N} \in (E^1)^\N \mid d(e_i) = r(e_{i+1})\}$ is the infinite path space and $\sigma$ is the left shift on $E^\infty$. The groupoid structure on the Deaconu--Renault groupoid $\mathcal{G}_{k,\alpha}$ is given by $s(x,d,y) = (y,0,y)$, $r(x,d,y) = (x,0,x)$ with multiplication $(x,d_1,y)(y,d_2,z) = (x,d_1+d_2,z)$ and inverse $(x,d,y)^{-1} = (y,-d,x)$; 
the étale topology on $\mathcal{G}_{k,\alpha}$ has basic open sets 
\begin{equation} 
Z(U,m,n,V) = \{(x,m-n,y) : (x,y) \in U \times V \text{ with } \sigma^m(x) = \sigma^n(y)\},
\end{equation} where $m,n \in \N$ and $U$,$V$ are open sets in $E^\infty$ (see for example \cite[Theorem 1]{deaconu1995groupoids}). 
    
     Since $\alpha$ is a homeomorphism, $E^\infty$ is homeomorphic to $X \times \{1,\dots,k\}^\N$ with $\sigma$ conjugate under this homeomorphism to $\alpha \times \rho$, where $\rho$ is the left shift on $\{1,\dots,k\}^\N$. In particular, the unit space of $\mathcal{G}_{k,\alpha}$ is homeomorphic to $X \times \{1,\dots,k\}^\N$, which is a Cantor space. 
    Since $\alpha$ is minimal, $\alpha^n$ has no fixed points for all $n \in \Z$. Hence, we have $(\alpha \times \rho)^n(z) \neq (\alpha \times \rho)^m(z)$ for all $z \in X \times \{1,\dots,k\}^\N$ and all $n, m \in \N$ with $n \neq m$. Therefore, the groupoid $\mathcal{G}_{k,\alpha}$ is principal. As $E^\infty$ is a Cantor space, it is second countable. It follows from the description of the topology on $\mathcal{G}_{k,\alpha}$ above that $\mathcal{G}_{k,\alpha}$ is second countable.
\end{proof}

Combining Corollary \ref{cor:Groupoid-model-for-B} with known principal étale groupoid models for UHF algebras, we obtain a principal étale groupoid model for $\O_k$ when $2 \leq k < \infty$.
\begin{proof}[Proof of Theorem \ref{thm:Groupoid-model-for-Ok}]
    Let $k \geq 2$. Choose the same odometer $\alpha:X \rightarrow X$ and a supernatural number $\s$ as in Theorem  \ref{thm:Getting-Ok}. Then $\O_k \cong B_{k,\alpha} \otimes M_\s$. Let $\mathcal{G}_{k,\alpha}$ be the Deaconu--Renault groupoid with $B_{k,\alpha} \cong C^*_r(\mathcal{G}_{k,\alpha})$ from Corollary \ref{cor:Groupoid-model-for-B}. Let $\mathcal{R}_\s$ be the canonical principal étale groupoid for the UHF algebra $M_\s$ arising from an AF étale equivalence relation; see for example \cite[Theorem 3.1.15]{renault1980groupoid} or \cite{GPS04}.

    Set $\G_k = \mathcal{G}_{k,\alpha} \times \mathcal{R}_\s$. By \cite[Lemma 5.1]{barlak2017cartan}, the reduced C$^*$-algebra of a product of étale groupoids is isomorphic to the minimal tensor product of the individual reduced groupoid C$^*$-algebras. Hence, we have
    \begin{equation}
        C^*_r(\G_k) \cong C^*_r(\mathcal{G}_{k,\alpha}) \otimes C^*_r(\mathcal{R}_\s) \cong B_{k,\alpha} \otimes M_\s \cong \O_k .
    \end{equation}
    Since $\mathcal{G}_{k,\alpha}$ and $\mathcal{R}_\s$ are both second countable, principal, étale groupoids whose unit space is a Cantor space, the same is true for the product groupoid 
    $\G_k$.
\end{proof}

\begin{rmk}
    One can also apply the results of this section to the main result from \cite{sibbel2024cantor} to obtain $\O_2 \cong \bigotimes_{i\in\N} B_{2,\alpha} \cong C^*_r( \prod_{i\in\N} (\mathcal{G}_{2,\alpha},\mathcal{G}_{2,\alpha}^{(0)}))$, where the infinite product groupoid is defined by 
\begin{equation}\prod_{i\in\N} (\mathcal{G}_{2,\alpha},\mathcal{G}_{2,\alpha}^{(0)}) = \{ (\gamma_i)_{i\in\N} \in \prod_{i\in\N} \mathcal{G}_{2,\alpha}: \gamma_i \in \mathcal{G}_{2,\alpha}^{(0)} \text{ for almost all } i \in \N\};
\end{equation} see \cite[Lemma 5.2]{barlak2017cartan}.  It would be interesting to know whether these groupoids depend on the Cantor minimal system $\alpha$ and whether they are related to the groupoid model for $\O_k$ constructed above. 
\end{rmk}

\bigskip
\end{document}